\def\5n{\negthinspace \negthinspace \negthinspace \negthinspace \negthinspace }
\def\4n{\negthinspace \negthinspace \negthinspace \negthinspace }
\def\3n{\negthinspace \negthinspace \negthinspace }
\def\2n{\negthinspace \negthinspace }
\def\1n{\negthinspace }
\def\dbE{\mathbb{E}}     
\def\dbF{\mathbb{F}} \def\sF{\mathscr{F}}    
\def\dbH{\mathbb{H}}
\def\dbN{\mathbb{N}}     
\def\dbP{\mathbb{P}}     
\def\dbQ{\mathbb{Q}}     
\def\dbR{\mathbb{R}}
\def\Om{\Omega}            \def\sgn{\mathop{\rm sgn}}
\def\ms{\medskip}
\def\no{\noindent}        \def\q{\quad}                      
    \def\qq{\qquad}                    
    \def\hb{\hbox}                     
         \def\rf{\eqref}                    
  \def\deq{\triangleq}               
            \def\({\Big (}
\def\les{\leqslant}                  \def\){\Big )}
\def\leq{\leqslant}       \def\geq{\geqslant}
\def\ges{\geqslant}       \def\esssup{\mathop{\rm esssup}}   \def\[{\Big[}
           \def\]{\Big]}
                   \def\cd{\cdot}
        \def\ts{\times}                      
\def\a{\alpha}        \def\G{\Gamma}         
            \def\d{\delta}        
\def\e{\varepsilon}             
           \def\i{\infty}   \def\k{\kappa}
\theoremstyle{plain}
\theoremstyle{rmk}
\newtheorem{theorem}{Theorem}[section]
\newtheorem{definition}[theorem]{Definition}
\newtheorem{proposition}[theorem]{Proposition}
\newtheorem{lemma}[theorem]{Lemma}
\newtheorem{remark}[theorem]{Remark}
\newtheorem{assumption}{Assumption}
\begin{document}
	\title{
		\Large \bf Multi-dimensional anticipated backward stochastic differential equations with  quadratic growth\thanks{
			YH is partially supported by Lebesgue Center of Mathematics ``Investissements d'avenir'' program-ANR-11-LABX-0020-01, by CAESARS-ANR-15-CE05-0024 and by MFG-ANR16-CE40-0015-01.
			Jiaqiang Wen is supported by National Natural Science Foundation of China (Grant No. 12101291), Guangdong Basic and Applied Basic Research Foundation (Grant No. 2025B151502009), and Shenzhen Fundamental Research General Program grant  (Grant No. JCYJ20230807093309021).
		}
	}
	
	\author{
		Ying Hu\thanks{Univ. Rennes, CNRS, IRMAR - UMR 6625, F-35000 Rennes, France
			(Email: {\tt ying.hu@univ-rennes1.fr}).}~,~~~
		Feng Li\thanks{
			Department of Mathematics,
			Southern University of Science and Technology, Shenzhen, Guangdong, 518055, China
			(Email: {\tt 12331011@mail.sustech.edu.cn}).}~,~~~
		Jiaqiang Wen\thanks{ Department of Mathematics and SUSTech International center for Mathematics,
			Southern University of Science and Technology, Shenzhen, Guangdong, 518055, China
			(Email: {\tt wenjq@sustech.edu.cn}).}
	}
	
	\date{}
	\maketitle
	\no\bf Abstract. \rm
This paper is devoted to the general solvability of anticipated backward stochastic differential equations with   quadratic growth  by relaxing the assumptions made by Hu, Li, and Wen \cite[Journal of Differential Equations, 270 (2021), 1298--1311]{hu2021anticipated} from  the one-dimensional case with bounded terminal values to the multi-dimensional situation with bounded/unbounded terminal values. Three new results regarding the existence and uniqueness of local and global solutions are established. More precisely, 
for the local solution with bounded terminal values, the generator $f(t, Y_t, Z_t, Y_{t+\delta_t},Z_{t+\zeta_t})$ is of general growth with respect to  $Y_t$ and $Y_{t+\d_{t}}$.
For the global solution with bounded terminal values, the generator $f(t, Y_t, Z_t, Y_{t+\delta_t},Z_{t+\zeta_t})$ is of skew sub-quadratic but also ``strictly and diagonally" quadratic growth in $Z_t$.
For the global solution with unbounded terminal values,  the generator $f(t, Y_t, Z_t, Y_{t+\delta_t})$ is of diagonal quadratic growth in $Z_t$ in the first case; and in the second case,  the generator $f(t, Z_t)+\dbE[g(t, Y_t,Z_t, Y_{t+\delta_t},Z_{t+\zeta_t}) ] $ is of diagonal quadratic growth  in $Z_t$ and linear growth in $Z_{t+\zeta_t}$.

	\ms
	
	\no\bf Key words:  
	\rm  
	Anticipated backward stochastic differential equation;
	 quadratic generator;
	multi-dimension;
	unbounded terminal value
	
	\ms
	
	\no\bf AMS subject classifications. \rm 60H10, 60H30.
\section{Introduction}
 Let $\{W_t;t\ges0\}$ be a $d$-dimensional standard Brownian motion defined on some complete probability space $(\Om,\sF,\dbP)$ and  $\dbF \triangleq \{\sF_{t};t \ges 0\}$ be the natural filtration of ${W}$ augmented by all the $\dbP$-null sets in $\sF$.
Fix a terminal time $T\in (0,+\i)$, a constant $K\in [0, +\i)$, and two positive integers $n$ and $d$.
 Consider the following  anticipated backward stochastic differential equations (ABSDEs, for short) over a finite time horizon $[0, T+K]$:
\begin{equation}\label{eq::1.1}\left\{\begin{aligned}
		&Y_t = \xi_{T} + \int_{t}^{T}f(s, Y_s, Z_s, Y_{s+\delta_s}, Z_{s+\zeta_s})ds - \int_{t}^{T} Z_s dW_s,   \quad t\in [0,T]; \\
		&Y_t = \xi_t, \quad   Z_t = \eta_t,   \quad    t\in[T, T+K],
	\end{aligned}\right.\end{equation}
where $\d$ and $\zeta$ are two deterministic $\dbR^{+}$-valued continuous functions defined on $[0, T]$, and $\xi$ and $\eta$ are some given processes defined on $[T,T+K]$.
The process $\xi$ is called the \emph{terminal value} and the function $f$ is called  the {\it generator}  of ABSDE (\ref{eq::1.1}). The unknown processes $(Y, Z)$, taking values in $\dbR^{n} \times \dbR^{n\times d}$, are called a pair of $\dbF$-progressively measurable solutions if they make Eq. (\ref{eq::1.1}) hold almost surely. 

In 2009,  to deal with the delayed stochastic optimal control problem,  Peng and Yang \cite{Peng2009anticipated}  introduced ABSDE \rf{eq::1.1}  and established the existence and uniqueness  under the condition that the generator is of uniformly Lipschitz and linear growth.
From then on,  this theory has attracted a lot of attentions. 
By regarding ABSDE as a new duality type of stochastic differential delay equations,  Chen and Wu \cite{Chen2010delaySDDE} derived the maximum principle for stochastic optimal control problem with delay. Along this line of research, Meng, Shi, Wang, and Zhang \cite{MengShiWangZhang2025} obtained a general maximum principle for a stochastic optimal control problem with delay.
Hamaguchi \cite{Hamaguchi2023} studied a maximum principle for  optimal control problems of   with delay.
Nie, Wang, and Wu \cite{NieWangWu2024} studied a type of linear-quadratic delayed mean-field social optimization, and
de Feo and \'{S}wiech 
\cite{deFeoSwiech2025} 
{studied} a verification theorem and  constructed the optimal feedback controls for  the optimal control problems with the  stochastic delay differential equations.
At the same time, the theory of ABSDE has also been well-developed.  For example,
{Wu, Wang, and Ren \cite{Wu2012nonlipschitz}, along with Lu and Ren \cite{lu2013anticipated}, proved the existence and uniqueness of  ABSDEs with non-Lipschitz generators and linear growth, as well as ABSDEs driven by Markov chains, respectively.}
Yang and Elliott \cite{YangElliott2013} obtained  some properties for generalized ABSDEs.
Hu and Chen \cite{HuChen2016} obtained the $L^p$ solutions of ABSDEs under monotonicity and general increasing conditions.
Cheridito and Nam \cite{Cheridito-Nam-18} studied a class of anticipated BSDEs when the generator is a path-functional of the solution under Lipschitz condition.
Wen and Shi \cite{wen2017delayed} and Douissi, Wen, and Shi \cite{Douissi2019fractional} proved the solvability of ABSDEs driven by a fractional Brownian motion.
Moreover, by using ABSDEs, Menoukeu-Pamen \cite{Menoukeu-Pamen2015} studied a non-linear time-advanced backward stochastic partial differential equations with jumps.
Chen and Yang \cite{ChenYang2024} obtained the well-posedness for anticipated backward stochastic Schrödinger equations.
Some other recent developments of  ABSDEs can be found in 
Chen, Wu and Yu \cite{chen2012delayed},  
Guatteri and Masiero \cite{GM2021MCRF},
Klimsiak and Rzymowski \cite{KR2023SPA},
Han and Li \cite{han2024stochastic}, etc.


However,  the framework of Peng and Yang \cite{Peng2009anticipated} 
failed to address some general delayed stochastic optimal control problems, such as
the delayed  stochastic linear-quadratic (LQ, for short) optimal control problems with random coefficients. The reason is that  the related stochastic Riccati equations are essentially    ABSDEs with quadratic growth (see  Chen, Wu, and Yu \cite{chen2012delayed}).
On one hand,  the related  problem of BSDEs with quadratic growth plays  an intrinsic mathematical interest  and Peng \cite[Section 5, page 270]{peng1999open} listed it as an open problem.
On the other hand, this topic has important applications in various fields, such as  nonzero-sum risk-sensitive stochastic differential games, 
financial price-impact models, financial market equilibrium problems for several interacting
agents, and stochastic equilibria problems in incomplete financial markets (see Hu, Liang, and Tang \cite{HuLiangTang2020,HuLiangTang2024}). 
Therefore, it is necessary and valuable to study ABSDEs with quadratic growth.
In general, in multi-dimensional situation,
ABSDE is called of quadratic growth if the generator $f$ is of quadratic growth in $Z$ for  {each $i = 1,..., n$}; and    it is  called of diagonally quadratic growth if $f^i$ is quadratic growth in the $i$-th row of $Z$ and grows at a rate strictly slower than quadratic in the  $j$-th component of $Z$ when  $j \neq i$.
For ABSDE \rf{eq::1.1} with quadratic growth, Fujii and Takahashi \cite{Fujii2018anticipated} study one-dimensional case when the generator $f$ is independent of the anticipated term $Z_{t+\zeta_t}$. Hu, Li, and Wen \cite{hu2021anticipated} studied the  one-dimensional case with bounded terminal value. Based on the work of Hu, Li, and Wen \cite{hu2021anticipated}, Pak, Kim, and Kim \cite{pak2024wellposedness}  studied one-dimensional  case with unbounded terminal value when  the generator $f$ is independent of the anticipated term $Z_{t+\zeta_t}$. 
 
Unfortunately, due to the fact that the comparison property does not hold for the general ABSDE  \rf{eq::1.1} (see the counter
Example 5.3 of Peng and Yang \cite{Peng2009anticipated}), the topic of multi-dimensional ABSDEs, the focus of the present paper, poses a great challenge.
Next, in order to present our work in more detail, let's recall some results about BSDEs.

In ABSDE \rf{eq::1.1}, if the generator $f$ is independent of the anticipated term $Y_{t+\delta_t}$ and  $Z_{t+\zeta_t}$, then it becomes the following classical BSDE:
\begin{equation*}
		Y_t = \xi_{T} + \int_{t}^{T}f(s, Y_s, Z_s)ds - \int_{t}^{T} Z_s dW_s,  \quad t\in [0,T].
	\end{equation*}
For the classical BSDE, Pardoux and Peng \cite{Pardoux1990BSDE} firstly solved the nonlinear situation when the generator has uniformly Lipschitz and linear growth in 1990. Since then, BSDEs have attracted significant attention due to their wide-ranging applications in fields such as stochastic control, partial differential equations, and mathematical finance (see Yong and Zhou \cite{Yong1999control} and Zhang \cite{zhang2017backward}).
Meanwhile, many efforts have been made to relax the assumptions on the generator for the existence and/or uniqueness of adapted solutions. 
For example, Kobylanski \cite{Kobylanski2000PDE}  established the existence and uniqueness  for one-dimensional  BSDEs with quadratic growth and  bounded terminal value. 
Tevzadze \cite{Tevzadze2008quadratic} applied fixed point principle to give a simple proof for the existence of solution. 
Briand and Hu \cite{Briand2006quadratic,Briand and Hu} proved the existence and uniqueness of BSDE with quadratic growth and  unbounded terminal value. 
Note that, compared with one-dimensional BSDEs, some important tools, such as Gisanov's transform and monotone convergence, fail to work with multi-dimentional BSDEs. 
So for the multi-dimensional BSDEs with quadratic growth,  Frei and Dos Reis \cite{Frei} provided a counterexample demonstrating that multidimensional quadratic BSDEs with simple generator and terminal value assumptions may lack global solutions on $[0, T]$. 
 Hu and Tang \cite{Hu-Tang-16} studied multi-dimensional BSDEs with diagonally quadratic generators and bounded terminal value, and Fan, Hu, and Tang \cite{fan2023multi} extended the previous works  by relaxing conditions and providing a novel method for handling unbounded terminal values. 
Some other recent developments of quadratic BSDEs can be found in Bahlali, Eddahbi and Ouknine \cite{Bahali-Eddahbi-Ouknine-17}, Barrieu and El Karoui \cite{Briand-Karoui-13}, Cheridito and Nam \cite{Cheridito-Nam-14,Cheridito-Nam-15},   Richou \cite{Richou-12}, 
Xing and Zitkovic \cite{Xing-Zitkovic},  Madoui, Eddahbi, and  Khelfallah \cite{madoui2022quadratic},  Jiang, Li, and Wei \cite{Juan2024mean-field}, and the references cited therein.

 In this work,  we study the multi-dimensional ABSDEs with diagonally quadratic growth. By introducing new methods, three new results regarding the existence and uniqueness of local and global solutions are established. 
 First, the existence and uniqueness of local solutions with bounded terminal value is proved by virtue of the fixed-point theorem (see \autoref{thm:2.2}), which generalizes both the multi-dimensional result of Fan, Hu, and Tang \cite[Theorem 2.1]{fan2023multi}  and the one-dimensional result of Hu, Li, and Wen \cite[Theorem 4.4]{hu2021anticipated}.
Second, by splicing local solutions, two  existence and uniqueness  results of global solutions  with bounded terminal value are established under different conditions: (i) the $i$-th component of generator $f$ is bounded with respect to $Z_{t+\zeta_{t}}$ and the $j$-th $(j\neq i)$ row of the matrix $Z_t$ (see \autoref{global_bdd}), which generalizes  Hu, Li, and Wen \cite[Theorem 4.8]{hu2021anticipated}; (ii) the $i$-th component of the generator $f$ is of linear growth with respect to $Z_{t+\zeta_{t}}$ and the $j$-th $(j\neq i)$ row of the matrix $Z_t$ (see \autoref{global_bdd2}), where we use a new method concerning John-Nirenberg inequality (different from the ones used in  Fan, Hu, and Tang \cite[Theorem 2.5]{fan2023multi}) to tackle the difficulties.
Third, by using an iterative method, a priori estimate, and $\theta$-method, the existence and uniqueness of global solutions with unbounded terminal value are proved when the generator $f$ is independent of the anticipated term $Z_{t+\zeta_{t}}$ (see \autoref{thm:5.2}),  which extends  Fan, Hu, and Tang \cite[Theorem 2.8]{fan2023multi}.
Finally, when the generator  appears as  
$f(t,Z_t)+ \dbE[g(t, Y_{t}, Z_{t}, Y_{t + \delta_{t}}, Z_{t+\zeta_{t}})]$, where $f$ is of quadratic growth in $Z_t$ and $g$ depends on  the anticipated terms $Y_{t + \delta_{t}}$ and $Z_{t + \zeta_{t}}$, we establish the existence and uniqueness of global solutions with unbounded terminal values (see \autoref{thm:5.3}).
This result relaxes the conditions imposed by Hu, Li, and Wen  \cite[Theorem 5.4]{hu2021anticipated}, where the generator is required to be bounded with respect to $Y_{t+\delta_{t}}$ and $Z_{t+\zeta_{t}}$.
 In contrast, our framework does not necessitate such boundedness, even in the one-dimensional case.

The rest of the paper is organized as follows.  \autoref{section2} introduces key notations and propositions related to BMO martingales. In   \autoref{section3}, we establish the existence and uniqueness of local solutions with bounded terminal values. Morevoer, based on  \autoref{section3},  we study the solvability of global solutions with bounded terminal value in \autoref{section4}. 
In  \autoref{section5}, we prove the existence and uniqueness  of global solutions with unbounded terminal value.  Finally,  \autoref{section6} provides proofs for auxiliary results concerning one-dimensional quadratic BSDEs, addressing both bounded and unbounded terminal value cases.

\section{Preliminaries}\label{section2}
Let $\{W_t;t\ges0\}$ be a $d$-dimensional standard Brownian motion defined on some complete probability space $(\Om,\sF,\dbP)$ and  $\dbF \triangleq \{\sF_{t};t \ges 0\}$ be the natural filtration of ${W}$ augmented by all the $\dbP$-null sets in $\sF$.
The notation $\mathbb{R}^{m \times d}$ denotes the space of $m \times d$-matrix $C$, equipped with the Euclidean norm defined as $|C| = \sqrt{\text{tr}(CC^{\top})}$, where $C^{\top}$ is the transpose of $C$.
Let $a\wedge b$ and $a \vee b$ be the minimum and maximum of two real numbers $a$ and $b$, respectively. Set $a^{+} \triangleq a \vee 0$ and  $a^{-} \triangleq -(a \wedge 0)$. Denote by ${\bf 1}_A$ by the indicator of set $A$, and $\sgn(x) \triangleq {\bf 1}_{\{x>0\}} - {\bf 1}_{\{x\leq 0\}}$.  For any $t\in [0,T]$,  $p \geq 1$, and Euclidean space $\dbH$, we introduce the following spaces:
\begin{align*}
		L^{2}_{\sF_{t}}(\Om; \dbH)
		&= \Big\{\xi:\Om\to\dbH\Bigm|\xi ~\hb{is $\sF_{t}$-measurable and~}  \|\xi \|_2 \triangleq (\dbE[\left | \xi \right |^2] )^\frac{1}{2} \textless \infty \Big\}, \\
		L^\i_{\sF_{t}}(\Om; \dbH)
		&= \Big\{\xi :\Om\to\dbH\Bigm|\xi ~\hb{is $\sF_{t}$-measurable and~}  \|\xi \|_{\i} \triangleq \mathop{\esssup}\limits_{\omega \in \Om}|\xi (\omega)| \textless \infty \Big\} , \\
		\mathcal{H}^p_{\dbF}(t,T;\dbH)
		&= \Big\{Z: \Om\times [t,T] \to\dbH\Bigm| Z~ \hb{is $\dbF$-progressively measurable and~} \\
		& \quad \quad \quad \quad \quad \quad\quad\ \quad \quad \quad   \|Z\|_{\mathcal{H}^p_{\dbF}(t,T)} \triangleq \Big(\dbE\Big[\int_t^T|Z_s|^2ds\Big]^{\frac{p}{2}}\Big)^\frac{1}{p}<\i\Big\},\\
		S^p_{\dbF}(t,T;\dbH)
		&= \Big\{Y:\Om\times [t,T] \to\dbH\Bigm| Y~ \hb{is $\dbF$-progressively measurable, continuous, and~} \\
		& \quad \quad \quad \quad \quad \quad\quad\ \quad \quad \quad   \|Y\|_{S^p_{\dbF}(t,T)} \triangleq \Big(\dbE[\mathop{\sup}\limits_{s \in [t,T]} |Y_{s}|^p]\Big)^\frac{1}{p}<\i \Big\}, \\
		\mathcal{H}^\i_{\dbF}(t,T;\dbH)
		&= \Big\{Y:\Om\times [t,T] \to\dbH\Bigm| Y~ \hb{is $\dbF$-progressively measurable, continuous, and~} \\
		& \quad \quad \quad \quad \quad \quad\quad\ \quad \quad \quad   \|Y\|_{\mathcal{H}^\i_{\dbF}(t,T)} \triangleq \mathop{\esssup}\limits_{(s,\omega) \in [t,T]\times\Om} |Y_{s}(w)| = \Big\|\mathop{\sup}\limits_{s \in [t,T]} |Y_{s}|\Big\|_{\i}<\i \Big\},\\
		 \mathcal{E}(t,T; \dbH)
		&=\Big\{Y : \Om\times [t,T] \to\dbH \Bigm| \exp(|Y|)\in \mathop{\bigcap}\limits_{q \geq 1} S^q_{\dbF}(t,T;\dbH)   \Big\}, \\
		\mathcal{M}(t, T; \dbH)
		&=\Big\{ Z : \Om\times [t,T] \to\dbH\Bigm| Z\in \mathop{\bigcap}\limits_{q \geq 1}	\mathcal{H}^q_{\dbF}(t,T;\dbH)\Big\}.
	\end{align*}
%
%
\subsection{BMO martingale}
 
On one hand, we define the following space concerning $BMO$ martingale: for any $t\in \mathscr{T}[0, T]$,
\begin{equation*}
	\begin{aligned}
		\mathcal{Z}^p_{\dbF}(t,T;\dbH)
		= \Big\{Z\in \mathcal{H}^p_{\dbF}(t,T;\dbH)\Bigm| 
		 \|Z\|_{\mathcal{Z}^p_{\dbF}} \triangleq \mathop{\sup}\limits_{\tau \in \mathscr{T}[t,T]} \Big\| \dbE_{\tau}\Big[\Big(\int_\tau^T|Z_s|^2ds\Big)^{\frac{p}{2}}\Big]\Big\|^{\frac{1}{p}}_{\i}   <\i \Big\}, \\
	\end{aligned}
\end{equation*}
where $\mathscr{T}[t,T]$ denotes the set of all $\dbF$-stopping times $\tau$ with values in $[t,T]$, and $\dbE_{\tau}$ is conditional expectation with respect to $\sigma$-field $\sF_{\tau}$. On the other hand, we let $M = (M_t, \sF_t)$ be a uniformly integrable martingale with $M_{0} = 0$ and define 
$$\|M\|_{{BMO_{p}}(\dbP)} \triangleq \mathop{\sup}\limits_{\tau \in \mathscr{T}[0,T]} \Big\| \dbE_{\tau}[|M_{T} - M_{\tau} |^{p}]^{\frac{1}{p}}\Big\|_{\i}.$$
In the sequel, the class $\{M: \|M\|_{{BMO_{p}}(\dbP)} < \i \}$ is denoted by ${BMO_{p}}(\dbP)$, which  is a Banach space equipped with the norm $\|.\|_{BMO_{p}(\dbP)}$. For simplicity, we denote $BMO_p^{[a, b]}(\dbP)$ the space over the time interval $[a, b]$ and $BMO(\dbP)$  the space of ${BMO_{2}}(\dbP)$.
Then,  for $Z \in \mathcal{Z}^2_{\dbF}(0,T;\dbH)$, the process $t \mapsto \int_{0}^{t} Z_s dW_s$ (denoted by $Z\cdot W$)  belongs to $BMO(\dbP)$ and 
$$ \| Z \|_{\mathcal{Z}^2_{\dbF}}\equiv\|Z\cdot W\|_{BMO(\dbP)} . $$
Denote by $\mathcal{E}(M)$ the Dol\'{e}ans-Dade exponential of a continuous local martingale $M$, that is, $\mathcal{E}(M)_t \triangleq \exp\{M_t - M_0  - \frac{1}{2}\langle M \rangle_0^t\}$.
 
\begin{definition} \sl 
A pair $(Y, Z) \in S^2_{\dbF}(0,T+K;\dbR^{n}) \times \mathcal{H}^2_{\dbF}(0,T+K;\dbR^{n\times d})$ is called an adapted solution of BSDE (\ref{eq::1.1}), if $\dbP$-almost surely, it satisfies (\ref{eq::1.1}). Additionally,  it is called a bounded adapted solution if $(Y, Z) \in \mathcal{H}^\i_{\dbF}(0,T+K;\dbR^{n}) \times \mathcal{Z}^2_{\dbF}(0,T+K;\dbR^{n\times d})$.
\end{definition}  

Next, we present some existing results concerning $BMO$ martingale, which can  be found in Kazamaki \cite{Kazamaki N}  and will be used frequently in the sequel.

\begin{proposition}[John-Nirenberg Inequality]\label{John-Nirenberg inequality}\sl 
 Let $M \in BMO(\dbP)$ with $\|M\|_{BMO(\dbP)} < 1$. Then for every stopping time $\tau \in \mathscr{T}[0,T]$, we have 
$$\dbE_{\tau}[\exp\{\langle M\rangle_{T} - \langle M\rangle_{\tau}\} ] \leq \big\{1 - \|M\|_{BMO(\dbP)}^{2}\big\}^{-1}.$$
\end{proposition}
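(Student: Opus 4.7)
The plan is to expand the exponential as a non-negative Taylor series and to prove by induction on $n\ges 0$ the moment bound
\[
\dbE_{\tau}\big[(\langle M\rangle_T-\langle M\rangle_\tau)^n\big]\;\les\; n!\,\|M\|_{BMO(\dbP)}^{2n},
\]
then sum the resulting geometric series. The case $n=0$ is trivial, and the case $n=1$ is exactly the definition of the $BMO_{2}$ norm: since $M^{2}-\langle M\rangle$ is a martingale and $M_{0}=0$, one has $\dbE_{\tau}[\langle M\rangle_T-\langle M\rangle_\tau]=\dbE_{\tau}[(M_T-M_\tau)^{2}]\les \|M\|_{BMO(\dbP)}^{2}$ almost surely, uniformly in $\tau\in\mathscr{T}[0,T]$.

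For the inductive step, set $A:=\langle M\rangle$ and use the fact that the finite-variation map $s\mapsto(A_T-A_s)^n$ vanishes at $s=T$; the elementary chain rule for Lebesgue--Stieltjes integrals then gives the identity
\[
(A_T-A_\tau)^n \;=\; n\int_{\tau}^{T}(A_T-A_s)^{n-1}\,dA_s.
\]
Taking $\dbE_{\tau}$ on both sides, inserting $\dbE_s$ inside the integral via the tower property and conditional Fubini (both legitimate because the integrand is non-negative), and applying the inductive hypothesis $\dbE_{s}[(A_T-A_s)^{n-1}]\les(n-1)!\,\|M\|_{BMO(\dbP)}^{2(n-1)}$, which holds almost surely uniformly in $s$, yields
\[
\dbE_{\tau}\big[(A_T-A_\tau)^n\big]\;\les\; n(n-1)!\,\|M\|_{BMO(\dbP)}^{2(n-1)}\,\dbE_{\tau}[A_T-A_\tau]\;\les\; n!\,\|M\|_{BMO(\dbP)}^{2n},
\]
closing the induction.

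Finally, exchange $\dbE_{\tau}$ with the Taylor series for $\exp$ by monotone convergence (every summand is non-negative) and conclude
\[
\dbE_{\tau}\big[\exp(\langle M\rangle_T-\langle M\rangle_\tau)\big]\;\les\;\sum_{n=0}^{\infty}\|M\|_{BMO(\dbP)}^{2n}\;=\;\big(1-\|M\|_{BMO(\dbP)}^{2}\big)^{-1},
\]
the geometric series converging precisely by the hypothesis $\|M\|_{BMO(\dbP)}<1$. The step I expect to require the most care is not algebraic but measure-theoretic: justifying that the inductive bound at a generic stopping time $s$ transfers to the random variable $s$ appearing inside $\int_{\tau}^{T}\,dA_s$. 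This is the standard trick based on the essential-supremum definition of the $BMO$ norm, which produces a \emph{deterministic} upper bound that can be pulled out of the integral. Once this point is cleanly laid out, everything else reduces to monotone convergence and the geometric series.
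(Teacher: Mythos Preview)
Your argument is correct and is precisely the classical proof of the John--Nirenberg inequality; the paper does not supply its own proof of this proposition but cites it from Kazamaki \cite{Kazamaki N}, where the very same induction on the moment bound $\dbE_{\tau}[(\langle M\rangle_T-\langle M\rangle_\tau)^n]\les n!\,\|M\|_{BMO(\dbP)}^{2n}$ followed by summation of the Taylor series is given. Your closing remark about the measure-theoretic step is well placed: the cleanest way to make it rigorous is to unfold $(A_T-A_\tau)^n$ as the iterated integral $n!\int_{\tau<s_1<\cdots<s_n<T}dA_{s_n}\cdots dA_{s_1}$ and peel off the innermost integral via $\dbE_{s_{n-1}}[A_T-A_{s_{n-1}}]\les\|M\|_{BMO(\dbP)}^{2}$, which is a deterministic bound and hence factors out.
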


\begin{proposition}\label{prop:1.3} \sl 
	For every $p\in [1,\i)$, there exists a positive constant $L_p$ such that for all uniformly integrable martingale $M$ with respect to $\tilde{\dbP}$, the following inequality holds true: 
	$$\|M\|_{BMO_{p}(\tilde{\dbP})} \leq L_p \|M\|_{BMO(\tilde{\dbP})},$$
	where 	  
	$d\tilde{\dbP}\triangleq \mathcal{E}(\Lambda \cd W)_T d\dbP$ with $\Lambda \in BMO(\dbP)$.
\end{proposition}

\begin{proposition}\label{prop:1.4} \sl 
	For $K > 0$, there exist two positive constants $c_1 $ and $c_2 $ depending only on $K$ such that for each $M \in BMO(\dbP)$, we have 
	$$c_1 \|M\|_{BMO(\dbP)} \leq \|\tilde{M}\|_{BMO(\tilde{\dbP})} \leq c_2 \|M\|_{BMO(\dbP)},$$
	where $\tilde{M} \triangleq M - \langle M, N \rangle$  with $N \in BMO(\dbP)$ satisfying $\|N\|_{BMO(\dbP)} \leq K$,
	and $d\tilde{\dbP} \triangleq \mathcal{E}(N)_T d\dbP.$
\end{proposition}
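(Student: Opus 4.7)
The plan is to derive both inequalities by combining Girsanov's theorem with the reverse Hölder inequality for the Doléans--Dade exponential $\mathcal{E}(N)$, which is the standard route through Kazamaki's theory. By Girsanov, under $\tilde{\dbP}$ the process $\tilde{M}$ is a local martingale and the density satisfies $d\tilde{\dbP}/d\dbP\bigm|_{\sF_{\tau}}=\mathcal{E}(N)_{\tau}$. The quantitative input I would invoke is Kazamaki's reverse Hölder theorem: since $\|N\|_{BMO(\dbP)}\le K$, there exist an exponent $r=r(K)>1$ and a constant $C_{\mathrm{rH}}(K)>0$ such that
\[
\dbE_{\tau}\!\left[\left(\frac{\mathcal{E}(N)_{T}}{\mathcal{E}(N)_{\tau}}\right)^{r}\right]^{1/r}\le C_{\mathrm{rH}}(K)
\]
for every $\tau\in\mathscr{T}[0,T]$.

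For the upper bound I fix $\tau\in\mathscr{T}[0,T]$ and apply Bayes' formula to obtain
\[
\tilde{\dbE}_{\tau}\bigl[|\tilde{M}_{T}-\tilde{M}_{\tau}|^{2}\bigr]=\dbE_{\tau}\!\left[\frac{\mathcal{E}(N)_{T}}{\mathcal{E}(N)_{\tau}}\,|\tilde{M}_{T}-\tilde{M}_{\tau}|^{2}\right].
\]
Hölder's inequality with exponents $r$ and $r'=r/(r-1)$, combined with the reverse Hölder estimate above, dominates the right-hand side by $C_{\mathrm{rH}}(K)\cdot\dbE_{\tau}[|\tilde{M}_{T}-\tilde{M}_{\tau}|^{2r'}]^{1/r'}$. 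By Proposition \ref{prop:1.3}, the $BMO_{2r'}(\dbP)$ and $BMO(\dbP)$ norms of $\tilde{M}$ are equivalent, so the right-hand side is bounded by a $K$-dependent multiple of $\|\tilde{M}\|_{BMO(\dbP)}^{2}$. Finally the triangle inequality together with the Kunita--Watanabe/BMO-product estimate $\|\langle M,N\rangle\|_{BMO(\dbP)}\le\|M\|_{BMO(\dbP)}\|N\|_{BMO(\dbP)}$ yields $\|\tilde{M}\|_{BMO(\dbP)}\le(1+K)\|M\|_{BMO(\dbP)}$, which gives the desired inequality with some $c_{2}=c_{2}(K)$.

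For the lower bound I would argue by symmetry: reversing the change of measure is itself a Girsanov transform, driven under $\tilde{\dbP}$ by the local martingale $\widehat{N}\triangleq -N+\langle N\rangle$, for which $\mathcal{E}(\widehat{N})_{T}=\mathcal{E}(N)_{T}^{-1}$ and $M=\tilde{M}+\langle\tilde{M},\widehat{N}\rangle$. Since the quadratic variation is measure-invariant, $\langle\widehat{N}\rangle=\langle N\rangle$, and a short Bayes/reverse-Hölder computation (identical in structure to the one above, applied with $X=\langle N\rangle_{T}-\langle N\rangle_{\tau}$) shows that $\|\widehat{N}\|_{BMO(\tilde{\dbP})}$ is controlled by a constant $K'=K'(K)$. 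Kazamaki's theorem then applies to $\mathcal{E}(\widehat{N})$ under $\tilde{\dbP}$ with constants depending only on $K$, and repeating the Bayes--Hölder argument in the opposite direction produces $\|M\|_{BMO(\dbP)}\le c(K)\|\tilde{M}\|_{BMO(\tilde{\dbP})}$, i.e.\ the lower inequality.

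The main obstacle is the quantitative Kazamaki input, namely extracting the exponent $r(K)$ and the constant $C_{\mathrm{rH}}(K)$ from the bare bound $\|N\|_{BMO(\dbP)}\le K$; once that is in hand every subsequent step is essentially a mechanical use of Bayes' rule, Hölder's inequality, and Proposition \ref{prop:1.3}. The most delicate bookkeeping lies in the symmetry step, where one must verify that $\|\widehat{N}\|_{BMO(\tilde{\dbP})}$ depends \emph{only} on $K$ and not on $M$, so that the same universal reverse Hölder constants govern the passage back from $\tilde{\dbP}$ to $\dbP$.
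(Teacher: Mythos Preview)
The paper does not give its own proof of this proposition; it is simply quoted, together with Propositions~\ref{John-Nirenberg inequality} and~\ref{prop:1.3}, as a known result from Kazamaki~\cite{Kazamaki N} (Theorem~3.6 there). Your outline via Bayes' rule, the reverse H\"older inequality for $\mathcal{E}(N)$, and the symmetry under the inverse Girsanov transform is exactly the standard route Kazamaki takes, so in spirit you are reproducing the reference rather than diverging from it.

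One technical imprecision is worth flagging: you invoke Proposition~\ref{prop:1.3} to control $\dbE_{\tau}\bigl[|\tilde M_T-\tilde M_\tau|^{2r'}\bigr]^{1/r'}$ by $\|\tilde M\|_{BMO(\dbP)}^{2}$, but under $\dbP$ the process $\tilde M=M-\langle M,N\rangle$ is only a semimartingale, not a martingale, so Proposition~\ref{prop:1.3} does not apply to it as stated. The clean repair (and Kazamaki's actual argument) is to use that $\langle\tilde M\rangle=\langle M\rangle$, write $\|\tilde M\|_{BMO(\tilde\dbP)}^{2}=\sup_{\tau}\bigl\|\tilde\dbE_{\tau}[\langle M\rangle_T-\langle M\rangle_\tau]\bigr\|_\infty$, apply Bayes/H\"older directly to the increment of the bracket, and then close with the energy inequality $\dbE_{\tau}\bigl[(\langle M\rangle_T-\langle M\rangle_\tau)^{r'}\bigr]\le C(r')\,\|M\|_{BMO(\dbP)}^{2r'}$. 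This bypasses any reference to ``$\|\tilde M\|_{BMO(\dbP)}$'' altogether, and the lower bound then follows by your symmetry argument unchanged.
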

Finally, we present the following (conditional) Burkholder-Davis-Gundy's inequality.
\begin{proposition}\label{BDG}
	For any $p>0$ and $M\in \mathcal{Z}^{p}_{\dbF}(0, T; \dbR^{m})$,  there exist constants $0 < c_p < C_p$, depending only on $p$ and $m$, such that for any $\tau\in \mathscr{T}[0, T]$,
	\begin{align*}
		c_p \dbE^{\tilde{\dbP}}_{\tau} \Big[\Big(\int_{\tau}^{T}|M_s|^2ds\Big)^{\frac{p}{2}}\Big] 
		\leq \dbE^{\tilde{\dbP}}_{\tau}\Big[ \mathop{\sup}\limits_{t \in \mathscr{T} [\tau,T]}\Big|\int_{\tau}^{t}M_sd\tilde{W}_s\Big|^p\Big] 
		\leq C_p \dbE^{\tilde{\dbP}}_{\tau} \Big[\Big(\int_{\tau}^{T}|M_s|^2ds\Big)^{\frac{p}{2}}\Big],
	\end{align*}
	where 	$\tilde{W} \triangleq W - \langle W, \Lambda \cd W\rangle$ with $\Lambda \in BMO(\dbP)$ 
		and $d\tilde{\dbP}\triangleq \mathcal{E}(\Lambda \cdot W)_T d\dbP$.
	
\end{proposition}
\begin{proof}
By a similar argument as in the proof of Zhang \cite [Theorem 2.4.1]{zhang2017backward}, we deduce that for any $A\in \sF_\tau$,
\begin{align*}
c_p \dbE^{\tilde{\dbP}} \Big[\Big(\int_{\tau}^{T}|M_s {\bf 1}_A|^2ds\Big)^{\frac{p}{2}}\Big] 
\leq \dbE^{\tilde{\dbP}}\Big[ \mathop{\sup}\limits_{t \in \mathscr{T} [\tau,T]}\Big|\int_{\tau}^{t}M_s {\bf 1}_Ad\tilde{W}_s\Big|^p\Big] 
\leq C_p \dbE^{\tilde{\dbP}} \Big[\Big(\int_{\tau}^{T}|M_s {\bf 1}_A|^2ds\Big)^{\frac{p}{2}}\Big].
\end{align*}
Finally, by the definition of conditional expectation, we get the desired result immediately. 
\end{proof}

 \subsection{One-dimensional quadratic BSDEs}
 
The aim of this subsection is to present some preliminary results. In the following,  we always let $\{\theta_{t};t\in[0, T]\}$ be an $\dbF$-progressively measurable scalar-valued non-negative process,  let  $\lambda, \lambda_{0},  \sigma, \sigma_0$, and $\gamma$ be some positive constants, and  $\alpha \in [0, 1)$, and  let $\rho(\cdot), \rho_{0}(\cdot): [0, +\i) \rightarrow [0,+\i)$ be two deterministic non-decreasing continuous functions   with  $\rho(0) = \rho_{0}(0) = 0$.
 Moreover, we always assume that the deterministic continuous functions $\delta$ and $\zeta$ that appear in  BSDE (\ref{eq::1.1}) satisfy the following two conditions:
\begin{itemize}
	\item [$\rm(i)$]  There exists a positive constant $K$ such that
	\begin{align}\label{eqqq:3.2}
		t + \delta_t \leq T+K\quad\hb{and}\quad t + \zeta_t \leq T+K, \q \forall t\in[0,T].
	\end{align}
	
	\item [$\rm(ii)$] There exists a positive constant $L$ such that for all nonnegative and integrable function $h$,
	\begin{align}\label{eq::3.2}
		\int_{t}^{T} h_{s + \delta_s} ds \leq L \int_{t}^{T+K} h_s ds\quad\hb{and}\quad
		\int_{t}^{T} h_{s + \zeta_s} ds \leq L \int_{t}^{T+K} h_s ds, \quad  \forall t \in [0, T].
	\end{align}
\end{itemize}

To establish the solvability of the multi-dimensional BSDE \eqref{eq::1.1} in the following sections, we first present some preliminary results concerning one-dimensional BSDEs with quadratic growth in this subsection. The proofs of these results are all included in the appendix.
For this purpose,  for   $\xi \in L_{\sF _{T}}^{\i}(\Om; \dbR)$, $f(\omega, t, z) : \Om \times [0, T] \ts \dbR^{d}$, and    $f(\cdot, \cdot, z)$ is $\dbF$-progressively measurable,  we consider the following equation
\begin{align}\label{241211}
	Y_t = \xi + \int_{t}^{T}f(s, Z_s)ds - \int_{t}^{T}Z_s dW_s,\q t\in[0,T]. 
\end{align}

\begin{lemma}\label{prop:2.1}  \sl 
	For any given pair $(U, V) \in \mathcal{H}^\i_{\dbF}(0,T+K;\dbR^{n}) \times \mathcal{Z}^2_{\dbF}(0,T+K;\dbR^{n\times d})$,  and  for any $t\in [0,T]$ and  $z, \bar{z} \in \dbR^{ d},$  if the generator  $f$  satisfies the following conditions:
	\begin{align*}
		& |f(t, z)| \leq 
		\theta_{t} + \rho(|U_{t}|) +  n\lambda|V_{t}|^{1+\alpha}+ \rho_{0}(\dbE_t[|U_{t+\delta_t}|]) +\lambda_{0}\big(\dbE_{t}[|V_{t+\zeta_t}|]\big)^{1+\alpha}+ \frac{\gamma}{2}|z|^2,\\
		& |f(t, z) - f(t, \bar{z})| \leq \rho(\|U\|_{\mathcal{H}^\i_{\dbF}(0,T+K;\dbR^n)})(\beta_t + |z| + |\bar{z}|)\times|z - \bar{z}|,
	\end{align*}
	where $\int_{0}^{T} \theta_tdt$ is (essentially) bounded.
	Then  BSDE \rf{241211}
	has a unique adapted solution $(Y, Z)\in \mathcal{H}^\i_{\dbF}(0,T;\dbR) \times \mathcal{Z}^2_{\dbF}(0,T;\dbR^{d}).$ Moreover,  we have the following estimates
	\begin{align}
		&	|Y_t| \leq  \frac{1}{\gamma}\ln2  + \|\xi\|_\i  +  \bigg\|\int_{0}^{T}\theta_{s}ds\bigg\|_{\i}  + C_{\alpha} \gamma^{\frac{1+\alpha}{1-\alpha}}x^{\frac{1+\alpha}{1-\alpha}} (T - t), \label{2.6}\\	
		&	 \dbE_{\tau}\Big[\int_{\tau}^{T}|Z_s|^2 ds\Big]
		\leq \frac{1}{\gamma}\exp\{{2\gamma\|Y\|_{\mathcal{H}^\i_{\dbF}(t,T; \dbR)}}\}\Big\{1+2\bigg\|\int_{0}^{T}\theta_{s}ds\bigg\|_{\i}+  2C_{\alpha}
		x^{\frac{1+\alpha}{1-\alpha}}(T-t)\Big\}\nonumber\\
		&\q\q\q\q\q\q\q\q +\frac{1}{\gamma^2}\exp\{2\gamma \|\xi\|_{\i}\}, \q \forall t\in[0,T],\q \forall \tau \in \mathscr{T}[t,T], \label{eq2.7}
	\end{align}
	where
	\begin{align*}
		&C_{\alpha} \triangleq \frac{1-\alpha}{2}(1+\alpha)^{\frac{1+\alpha}{1-\alpha}} \\ &x\triangleq 2\big[2 + (\rho+\rho_{0})\big(\|U\|_{\mathcal{H}^\i_{\dbF}(t,T+K; \dbR^n)}\big) \big]^2T + 2\big(1+n\lambda+(\lambda_{0} + 1)L\big)^2\|V\|^2_{\mathcal{Z}^2_{\dbF}(t,T+K; \dbR^{n\times d})}.
	\end{align*}
\end{lemma}

\begin{lemma}\label{lemma:6.2} \sl
	For any given pair $(U, V) \in \mathcal{H}^\i_{\dbF}(0,T+K;\dbR^{n}) \times \mathcal{Z}^2_{\dbF}(0,T+K;\dbR^{n\times d})$,   and for any $t\in[0,T]$ and $z\in\dbR^d$,
	if  the generator $f$ satisfies the following condition:
		\begin{align}\label{eq:6.6}
			|f(t, z)| \leq 
			\theta_{t} + \sigma|U_t|+  \lambda |V_t|^{1+\alpha}+ \sigma_0\dbE_t[|U_{t+\delta_t}|] +\lambda_{0}\dbE_{t}[|V_{t+\zeta_t}|] +\frac{\gamma}{2}|z|^2,
		\end{align}
		where $\int_{0}^{T} \theta_tdt$ is (essentially) bounded. Then, for any $t\in[0, T]$, the solution $Y$ of BSDE \rf{241211} has the following estimate
		\begin{align}\label{eq:6.7}
			\exp(\gamma|Y_\tau|) \leq \dbE_{\tau}&\exp\bigg\{\gamma\|\xi\|_{\i} + \gamma \Big\|\int_{0}^{T} \theta_s ds\Big\|_{\i} + (\sigma + \sigma_0)\gamma\|U\|_{\mathcal{H}^\i_{\dbF}(\tau,T+K;\dbR^{n})}(T-\tau)\nonumber \\
			&\qq\q + \lambda\gamma\int_{\tau}^{T}|V_s|^{1+\alpha}ds + \lambda_{0}\gamma \int_{\tau}^{T}\dbE_{s}[|V_{s+\zeta_s}|]ds
			\bigg\},\q \forall \tau \in \mathscr{T}[0,T].
		\end{align}
\end{lemma}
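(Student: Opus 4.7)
\textbf{Proof proposal for Lemma \ref{lemma:6.2}.} My plan is to use an exponential Girsanov--type transform, applying Itô's formula to $\exp(\gamma|Y_t|)$ multiplied by an absolutely continuous correction that absorbs the non-quadratic terms in the one-sided bound on $f$. Since $y\mapsto|y|$ is not $C^2$, the first step is to approximate it by $C^2$ convex functions $\phi_\varepsilon$ satisfying $\phi_\varepsilon(y)\uparrow|y|$, $|\phi_\varepsilon'|\les1$, $\phi_\varepsilon'(y)\to\sgn(y)$, and $\phi_\varepsilon''\ges0$ with $\phi_\varepsilon''(y)\to0$ for $y\neq0$ (for instance $\phi_\varepsilon(y)=\sqrt{y^2+\varepsilon^2}-\varepsilon$). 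Set
\[
\b_s\deq\theta_s+\sigma|U_s|+\lambda|V_s|^{1+\alpha}+\sigma_0\dbE_s[|U_{s+\d_s}|]+\lambda_0\dbE_s[|V_{s+\zeta_s}|],
\]
and consider the auxiliary process
\[
N_t^\varepsilon\deq\exp\Big\{\gamma\phi_\varepsilon(Y_t)+\gamma\int_0^t\b_s\,ds\Big\}.
\]

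Applying Itô's formula and substituting $dY_t=-f(t,Z_t)\,dt+Z_t\,dW_t$, the drift of $N_t^\varepsilon$ equals
\[
\gamma N_t^\varepsilon\Big[-\phi_\varepsilon'(Y_t)f(t,Z_t)+\tfrac{1}{2}\phi_\varepsilon''(Y_t)|Z_t|^2+\tfrac{\gamma}{2}\big(\phi_\varepsilon'(Y_t)\big)^2|Z_t|^2+\b_t\Big].
\]
Letting $\varepsilon\downarrow 0$ on the set $\{Y_t\neq0\}$ (and noting $\phi_\varepsilon''\ges0$ preserves the inequality), the drift tends to
\[
\gamma N_t\Big[-\sgn(Y_t)f(t,Z_t)+\tfrac{\gamma}{2}|Z_t|^2+\b_t\Big],
\]
which is nonnegative by the structural assumption \eqref{eq:6.6}. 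Consequently $N$ is (at least) a local submartingale with stochastic integral part $\gamma\int_0^\cdot N_s\phi_\varepsilon'(Y_s)Z_s\,dW_s$. The delicate step is to upgrade this to a true submartingale: since by hypothesis we work with the solution given by Lemma \ref{prop:2.1}, we have $Y\in L^\infty_\dbF$ and $Z\in\mathcal{Z}^2_\dbF$, so $Z\cdot W$ is BMO; boundedness of $\phi_\varepsilon(Y)$ and $\int_0^T\b_s\,ds$ (using $U\in L^\infty_\dbF$, $V\in\mathcal{Z}^2_\dbF$, and the integrability of $\theta$) then allows a standard localization/uniform integrability argument to give $N_t\leq\dbE_t[N_T]$.

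Rearranging this inequality and letting $\varepsilon\downarrow 0$, we obtain
\[
\exp(\gamma|Y_t|)\leq\dbE_t\Big[\exp\Big\{\gamma|\xi|+\gamma\int_t^T\b_s\,ds\Big\}\Big].
\]
It remains to bound $\int_t^T\b_s\,ds$ term by term inside the expectation: use $|\xi|\leq\|\xi\|_\i$, $\int_t^T\theta_s\,ds\leq\|\int_0^T\theta_s\,ds\|_\i$, and the fact that $s+\d_s$, $s+\zeta_s\in[t,T+K]$ for $s\in[t,T]$ so that $|U_s|,\dbE_s[|U_{s+\d_s}|]\leq\|U\|_{L^\i_\dbF(t,T+K)}$. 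Collecting the remaining $V$-terms (which must be left under the expectation since no $L^\infty$-bound is assumed on $V$) yields exactly \eqref{eq:6.7}.

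The main obstacle I anticipate is Step 3, i.e., justifying that $N^\varepsilon$ (or its limit) is a genuine submartingale rather than merely a local one. Because $N^\varepsilon$ involves $\exp(\gamma|Y|)$ and the stochastic integrand contains the product $N^\varepsilon Z$, one cannot rely on $\dbE\int|Z|^2\,ds<\i$ alone; instead, the BMO property $Z\cdot W\in BMO(\dbP)$ combined with the John--Nirenberg inequality (Proposition \ref{John-Nirenberg inequality}) is what provides the exponential moments needed to localize and pass to the limit. Once this integrability point is handled, the rest is a routine rearrangement.
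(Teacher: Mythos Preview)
Your approach has a genuine gap at the $\varepsilon$-level drift estimate. For fixed $\varepsilon>0$, the drift of $N^\varepsilon_t$ is bounded below (using $\phi_\varepsilon'(y)=|\phi_\varepsilon'(y)|\sgn(y)$ and \eqref{eq:6.6}) by
\[
\gamma N^\varepsilon_t\Big[(1-|\phi_\varepsilon'(Y_t)|)\b_t+\tfrac12\phi_\varepsilon''(Y_t)|Z_t|^2-\tfrac{\gamma}{2}|\phi_\varepsilon'(Y_t)|\big(1-|\phi_\varepsilon'(Y_t)|\big)|Z_t|^2\Big].
\]
The last bracket is \emph{not} nonnegative in general: with $\phi_\varepsilon(y)=\sqrt{y^2+\varepsilon^2}-\varepsilon$ the condition $\phi_\varepsilon''\ges\gamma|\phi_\varepsilon'|(1-|\phi_\varepsilon'|)$ reduces to $\gamma|y|\sqrt{y^2+\varepsilon^2}\les\sqrt{y^2+\varepsilon^2}+|y|$, which fails for $|y|$ large. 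Hence $N^\varepsilon$ is \emph{not} a local submartingale for fixed $\varepsilon$, and you cannot first pass to the pointwise limit of the drift and only then deduce the submartingale inequality --- that inversion of limit and expectation is exactly what needs justification.

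The paper avoids this by applying the It\^o--Tanaka formula directly to $|Y|$: then $d|Y_t|=\sgn(Y_t)\,dY_t+dL^0_t$, and the process $\exp\{\gamma|Y_t|+\gamma\int_0^t\b_s\,ds\}$ has drift $\gamma\exp(\cdots)\big[\b_t-\sgn(Y_t)f(t,Z_t)+\tfrac{\gamma}{2}|Z_t|^2\big]dt+\gamma\exp(\cdots)\,dL^0_t$, which is nonnegative by \eqref{eq:6.6} and $dL^0\ges0$. The point is that Tanaka gives $|\phi'|\equiv1$ away from $0$, so the quadratic terms cancel exactly, and the defect at $0$ is absorbed by the (nonnegative) local time rather than by a second-derivative term that is too small. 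The remaining integrability issue is handled by a standard stopping-time localization $\tau_n$ plus dominated convergence (using $Y\in L^\infty_\dbF$ and $V\in\mathcal{Z}^2_\dbF$), not John--Nirenberg. Your overall strategy is right; replacing the smooth approximation by It\^o--Tanaka fixes the gap with no other changes needed.
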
		
		
\begin{lemma}\label{lemma:6.2-2} \sl
	For any given pair $(U, V) \in \mathcal{H}^\i_{\dbF}(0,T+K;\dbR^{n}) \times \mathcal{Z}^2_{\dbF}(0,T+K;\dbR^{n\times d})$,   and for any $t\in[0,T]$ and $z\in\dbR^d$,
	if   the generator $f$ satisfies 
		\begin{align}\label{eq:6.8}
			f(t, z) \geq \frac{{\gamma}}{2}|z|^2 - \theta_{t} - \sigma|U_t| - \lambda |V_t|^{1+\alpha} - \sigma_0\dbE_t[|U_{t+\delta_t}|] -\lambda_{0}\dbE_{t}[|V_{t+\zeta_t}|]
		\end{align}
		or
		\begin{align}\label{eq:6.9}
			\q f(t, z) \leq -\frac{{\gamma}}{2}|z|^2 + \theta_{t} + \sigma|U_t| + \lambda |V_t|^{1+\alpha} + \sigma_0\dbE_t[|U_{t+\delta_t}|] +\lambda_{0} \dbE_{t}[|V_{t+\zeta_t}|],
		\end{align}
		where $\int_{0}^{T} \theta_tdt$ is (essentially) bounded. Then, for any $\varepsilon \in (0, \frac{{\gamma}}{9}]$ and $t\in[0, T]$, 
		the solution $Z$ of BSDE \rf{241211} has the following estimate
		\begin{align}\label{eq:6.10}
			&\dbE_\tau\exp \Big\{ \frac{{\gamma}}{2}\varepsilon \int_{\tau}^{T}|Z_s|^2ds\Big\}\nonumber \\
			\leq &\dbE_\tau\exp\bigg\{6\varepsilon \|Y\|_{\mathcal{H}^\i_{\dbF}(\tau, T;\dbR)} +  3\varepsilon\Big\|\int_{0}^{T} \theta_{s}ds\Big\|_{\i} + 3\varepsilon(\sigma+\sigma_0)\|U\|_{\mathcal{H}^\i_{\dbF}(\tau, T+K;\dbR^{n})}(T - t)\\
			&\qq\q\q+ 3\varepsilon\lambda \int_{\tau}^{T}|V_s|^{1+\alpha}ds  + 3\varepsilon\lambda_{0}\int_{\tau}^{T}\dbE_{s}[|V_{s+\zeta_s}|]ds\bigg\},\q \forall \tau \in \mathscr{T}[0,T]\nonumber.
		\end{align}

\end{lemma}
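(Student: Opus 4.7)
I plan to prove the estimate by constructing an exponential supermartingale $X_s$ whose terminal value encodes the desired $\int |Z|^2$, and then converting the resulting inequality into the stated form via Cauchy--Schwarz. I treat case \eqref{eq:6.8} in detail; case \eqref{eq:6.9} is symmetric. Introduce the shorthand
\[
R_s := \theta_s+\sigma|U_s|+\lambda|V_s|^{1+\alpha}+\sigma_0\dbE_s[|U_{s+\delta_s}|]+\lambda_0\dbE_s[|V_{s+\zeta_s}|]\ge 0,
\]
so \eqref{eq:6.8} reads $f(s,Z_s)\ge \tfrac{\gamma}{2}|Z_s|^2-R_s$, and define, with $A := \tfrac{3\varepsilon(\gamma-3\varepsilon)}{2}$,
\[
X_s := \exp\Big\{A\int_{0}^{s}|Z_u|^2\,du + 3\varepsilon Y_s - 3\varepsilon \int_{0}^{s} R_u\,du\Big\}.
\]
A direct It\^o expansion combined with the one-sided bound on $f$ shows that the drift of $X$ is bounded above by $X_s\bigl(A-\tfrac{3\varepsilon(\gamma-3\varepsilon)}{2}\bigr)|Z_s|^2\,ds=0$, so $X$ is a local supermartingale; localizing at $\tau_n:=\inf\{s:\int_{0}^{s}|Z_u|^2\,du\ge n\}\wedge T$ (on which $X$ is bounded) and invoking Fatou's lemma then yields $\dbE_t X_T\le X_t$.

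Rearranging this and using the crude bounds $|\xi|,|Y_t|\le\|Y\|_{L^\infty_{\dbF}(t,T;\dbR)}$ produces
\[
\dbE_t\exp\Big\{A\int_{t}^{T}|Z_u|^2\,du - 3\varepsilon\int_{t}^{T}R_u\,du\Big\}\le \exp\bigl\{6\varepsilon\|Y\|_{L^\infty_{\dbF}(t,T;\dbR)}\bigr\}.
\]
The crucial arithmetic observation---which is exactly what pins down the constant $1/9$ in the hypothesis---is that $\varepsilon\le\gamma/9$ is equivalent to $A\ge\varepsilon\gamma$. Hence the same bound holds after enlarging the exponent on $|Z|^2$ from $A$ to $\varepsilon\gamma$, and applying Cauchy--Schwarz to the factorization
\[
\exp\Big(\tfrac{\varepsilon\gamma}{2}\textstyle\int_{t}^{T}|Z_s|^2\,ds\Big) = \exp\Big(\tfrac{\varepsilon\gamma}{2}\textstyle\int_{t}^{T}|Z_s|^2\,ds - \tfrac{3\varepsilon}{2}\textstyle\int_{t}^{T}R_s\,ds\Big)\cdot\exp\Big(\tfrac{3\varepsilon}{2}\textstyle\int_{t}^{T}R_s\,ds\Big)
\]
yields
\[
\dbE_t\exp\Big\{\tfrac{\varepsilon\gamma}{2}\int_{t}^{T}|Z_s|^2\,ds\Big\}\le \exp\bigl(3\varepsilon\|Y\|_{L^\infty_{\dbF}(t,T;\dbR)}\bigr)\Big(\dbE_t\exp\Big\{3\varepsilon\int_{t}^{T}R_s\,ds\Big\}\Big)^{1/2}.
\]

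To match the exact right-hand side of \eqref{eq:6.10}, I will use that $R\ge 0$ forces $\dbE_t\exp(3\varepsilon\int R)\ge 1$, so $\sqrt{x}\le x$ removes the square root, while enlarging $3\varepsilon\|Y\|_\infty$ to $6\varepsilon\|Y\|_\infty$ is free; a term-by-term majorization of $R_s$ using $\int_t^T\theta_s\,ds\le \|\int_0^T\theta_s\,ds\|_\infty$, $\int_t^T|U_s|\,ds\le\|U\|_\infty(T-t)$, and $\int_t^T\dbE_s[|U_{s+\delta_s}|]\,ds\le\|U\|_\infty(T-t)$ then produces exactly \eqref{eq:6.10}. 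For case \eqref{eq:6.9}, the mirror process $\tilde X_s:=\exp\bigl(A\int_0^s|Z_u|^2\,du-3\varepsilon Y_s-3\varepsilon\int_0^s R_u\,du\bigr)$ with the same $A$ works: the sign flip on $Y$ combines with the upper one-sided bound $f(s,z)\le -\tfrac{\gamma}{2}|z|^2+R_s$ to again cancel the $|Z|^2$-drift, and the remaining manipulations are identical. The only real technical subtlety I expect is the standard localization argument used to pass from a local to a true supermartingale, which is routine given the $BMO(\dbP)$ integrability of $Z\cdot W$ furnished by Lemma~\ref{prop:2.1}.
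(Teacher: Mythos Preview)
Your argument is correct and reaches exactly the same estimate, but the organization differs from the paper's. The paper does not build an explicit supermartingale: instead it integrates the BSDE directly to obtain $\tfrac{\gamma}{2}\int_t^{\tilde\tau_n}|Z_s|^2ds\le N+\int_t^{\tilde\tau_n}Z_sdW_s$ with $N=2\sup_s|Y_s|+\int_t^T R_s\,ds$, multiplies by $\varepsilon$, exponentiates, and applies a three-factor H\"older inequality with exponents $(3,3,3)$, one factor being the Novikov exponential $\exp\bigl(3\varepsilon\!\int Z\,dW-\tfrac{9}{2}\varepsilon^2\!\int|Z|^2\bigr)$; the condition $\varepsilon\le\gamma/9$ then appears as $\tfrac{9}{2}\varepsilon^2\le\tfrac{\gamma}{2}\varepsilon$, allowing the leftover $\exp(\tfrac{9}{2}\varepsilon^2\!\int|Z|^2)$ to be absorbed into the left-hand side. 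Your route front-loads the same cancellation into the drift of $X_s$ (the $\tfrac{9\varepsilon^2}{2}|Z|^2$ from the quadratic variation of $3\varepsilon Y$ plays exactly the role of the Novikov correction), and the constraint $\varepsilon\le\gamma/9$ resurfaces as $A\ge\varepsilon\gamma$. What your packaging buys is that you never need to invoke Novikov or the three-way H\"older decomposition explicitly; what the paper's version buys is that it makes the appearance of the constant $9$ slightly more transparent as arising from the H\"older exponent. Both are standard exponential-martingale arguments and neither is more general than the other here.
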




Now we introduce the following lemma that establish certain bounds on the (potentially unbounded) solutions of scalar-valued quadratic BSDEs. For a comprehensive proof, the reader can refer to Fan--Hu--Tang \cite[Proposition 1]{Fan S}.
\begin{lemma}\label{lemma:4.1} \sl 
Assume that there exists an  $\dbF$-progressively measurable scalar-valued non-negative process  $\{\bar \zeta_{t};t\in[0, T]\}$ such that for any $t\in[0,T]$ and  $z\in \dbR^{ d}$, 
	\begin{align*}
		|f(t, z)| \leq \bar{\zeta_{t}} + \frac{\gamma}{2}|z|^2 \q 
		 \Big(\hbox{resp.}\q	f(t, z) \leq \bar{\zeta_{t}} + \frac{\gamma}{2}|z|^2\Big).
	\end{align*}
	If the solution $Y$ of BSDE \rf{241211} satisfies the following condition 
	\begin{align*}
		\dbE \exp \Big\{2\gamma\mathop{\sup}\limits_{t\in[0,T]}|Y_{t}| + 2\gamma\int_{0}^{T}\bar{\zeta_{s}}ds \Big\} < +\i \q
		\bigg(\hb{resp.}	\q \dbE \exp \Big\{2\gamma\mathop{\sup}\limits_{t\in[0,T]}|Y^{+}_{t}| + 2\gamma\int_{0}^{T}\bar{\zeta_{s}}ds \Big\}< +\i \bigg),
	\end{align*}
then	we have 
	\begin{align*}
		\exp\{\gamma|Y_t|\} \leq \dbE_{t} \exp\Big\{\gamma|\xi|+\gamma\int_{t}^{T}\bar{\zeta_{s}}ds\Big\} \q
		\bigg(\hb{resp.}	\q	\exp\{\gamma Y^{+}_t\} \leq  \dbE_{t}\exp\Big\{\gamma\xi^{+}+\gamma\int_{t}^{T}\bar{\zeta_{s}}ds\Big\} \bigg).
	\end{align*}
\end{lemma}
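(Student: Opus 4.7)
The plan is to apply It\^o's formula to an exponential transform of $Y$ and exploit the quadratic growth bound on $f$ to cancel the $|Z|^2$ contribution coming from the quadratic variation---the classical Kobylanski cancellation---then to combine this with a localization argument to handle the potential unboundedness of $Y$. The exponential integrability hypothesis is what converts the resulting local (sub)martingale into a true one.

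For the one-sided (``resp.'') case with $f(t,z)\leq \bar{\zeta_{t}} + \frac{\gamma}{2}|z|^2$, I would apply the Meyer--It\^o formula to $\Phi(y):=e^{\gamma y^+}$, which is convex but not $C^1$ at $0$. A direct computation produces a drift whose finite-variation part (excluding a non-negative local-time contribution at $0$) is bounded below by $-\gamma e^{\gamma Y_t^+}\bar{\zeta_{t}}$ after the $|Z_t|^2$ terms cancel on $\{Y_t>0\}$ via $-\gamma f + \frac{\gamma^2}{2}|Z_t|^2 \geq -\gamma\bar{\zeta_{t}}$. Multiplying by the deterministic integrating factor $e^{\gamma\int_0^t\bar{\zeta_{s}}\,ds}$, the process
\[
M_t := \exp\Big\{\gamma Y_t^+ + \gamma\int_0^t\bar{\zeta_{s}}\,ds\Big\}
\]
is a local submartingale. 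To upgrade this to a true submartingale, I would introduce stopping times $\tau_n\uparrow T$ reducing the stochastic integral and invoke the assumed integrability of $\exp\{2\gamma\sup_s Y_s^+ + 2\gamma\int_0^T\bar{\zeta_{s}}\,ds\}$ to dominate $\{M_{T\wedge\tau_n}\}_n$ in $L^2$, yielding uniform integrability. Passing to the limit, $M_t\leq \dbE_t[M_T]$ rearranges to the claimed one-sided bound.

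For the two-sided case with $|f(t,z)|\leq \bar{\zeta_{t}} + \frac{\gamma}{2}|z|^2$, I would apply the one-sided estimate both to $(Y,Z,\xi)$ and to the triple $(-Y,-Z,-\xi)$; the latter satisfies a BSDE with generator $\tilde f(t,z):=-f(t,-z)$ obeying the same quadratic bound. This yields $\exp\{\gamma Y_t^+\}\leq \dbE_t\exp\{\gamma\xi^+ + \gamma\int_t^T\bar{\zeta_{s}}\,ds\}$ and $\exp\{\gamma Y_t^-\}\leq \dbE_t\exp\{\gamma\xi^- + \gamma\int_t^T\bar{\zeta_{s}}\,ds\}$. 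Now on $\{Y_t\geq 0\}$ one has $e^{\gamma|Y_t|}=e^{\gamma Y_t^+}$, while on $\{Y_t<0\}$ one has $e^{\gamma|Y_t|}=e^{\gamma Y_t^-}$; combined with $\xi^\pm\leq |\xi|$, both cases deliver $\exp\{\gamma|Y_t|\}\leq \dbE_t\exp\{\gamma|\xi| + \gamma\int_t^T\bar{\zeta_{s}}\,ds\}$ pathwise, which is the desired conclusion---note that this case analysis, rather than summing the two one-sided bounds, is what avoids picking up a spurious constant factor of $2$.

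The main obstacle is not the It\^o calculation or the $|Z|^2$ cancellation, which are routine, but the rigorous promotion of the local submartingale to a true one when $Y$ is unbounded. This step requires the full strength of the exponential integrability assumption---the factor $2$ in front of $\gamma\sup_t|Y_t|+\gamma\int_0^T\bar{\zeta_{s}}\,ds$ is chosen precisely to provide $L^2$-boundedness of $\{M_{T\wedge\tau_n}\}_n$, hence uniform integrability along the localization sequence. A secondary subtlety is handling the non-smoothness of $\Phi(y)=e^{\gamma y^+}$ at $0$ via Meyer--It\^o; fortunately the associated local-time term is non-negative and may be safely discarded when deriving the submartingale inequality.
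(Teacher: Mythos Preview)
Your proposal is correct. The paper does not prove this lemma itself but refers the reader to Fan--Hu--Tang \cite[Proposition 1]{Fan S}; the argument there is precisely the standard exponential-transform / It\^o--Tanaka computation you describe, with localization and the exponential integrability hypothesis used to pass from a local submartingale to a true one.
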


\section{Local solution with bounded terminal value}\label{section3}

 In this section, we study local  solutions of the quadratic BSDE (\ref{eq::1.1})  with bounded terminal value.
Before further specifying, we present an  assumption.
%

\begin{assumption}\label{assumption1} \rm 
	Assume that  $f(\omega, t, y, z, \phi_r, \psi_{\bar{r}}):  \Om\times [0, T]\times \dbR^n \times \dbR^{n\times d}\times L^2_{\sF_r}(\Om; \dbR^n) \times L^2_{\sF_{\bar{r}}}(\Om; \dbR^{n\times d}) \rightarrow L^2_{\sF_t}(\Om;\dbR^n)$ and $f(\cdot, \cdot, y, z,\phi_r, \psi_{\bar{r}})$ is $\dbF$-progressively measurable, where $r, \bar{r} \in [t, T+K]$. In addition, $f(\cdot)$ satisfies the following conditions:
\begin{itemize}
\item [$\rm(i)$] For $i = 1, ..., n$, $f^{i}$ satisfies that $d\dbP \times dt$-a.e., for all $ y\in \dbR^n, z\in \dbR^{n\times d}, \phi\in \mathcal{H}^2_{\dbF}(t,T + K;\dbR^{n}),$ and $\psi\in \mathcal{H}^2_{\dbF}(t,T + K;\dbR^{n\times d})$, 
$$|f^{i}(t, y, z, \phi_{r},\psi_{\bar{r}})| \leq 
\theta_{t} + \rho(|y|) +  \lambda\sum_{j \neq i}|z^{j}|^{1+\alpha}+ \rho_{0}(\dbE_t[|\phi_{r}|]) +\lambda_{0}\big(\dbE_{t}[|\psi_{\bar{r}}|]\big)^{1+\alpha} +  \frac{\gamma}{2}|z^i|^2.$$
\item [$\rm(ii)$] For $i = 1, ..., n$, $f^{i}$ satisfies that $d\dbP \times dt$-a.e., for all  $y, \bar{y} \in \dbR^{n}, z, \bar{z}\in \dbR^{n\times d}, \phi, \bar{\phi}\in \mathcal{H}^2_{\dbF}(t,T + K;\dbR^{n}),$ and $\psi, \bar{\psi}\in \mathcal{H}^2_{\dbF}(t,T + K;\dbR^{n \times d})$,
\begin{align*}
		&|f^{i}(t, y, z, \phi_{r}, \psi_{\bar{r}}) - f^{i}(t, \bar{y}, \bar{z}, \bar{\phi}_{r}, \bar{\psi}_{\bar{r}})|
		\leq \rho\big(|y| \vee |\bar{y}| \vee \dbE_t[|\phi_{r}|] \vee \dbE_t[|\bar{\phi}_{r}|]\big)   \\
		&\q\times \Big[\big(1 + |z| + |\bar{z}| + \dbE_t[|\psi_{\bar{r}}|] + \dbE_t[|\bar{\psi}_{\bar{r}}|]\big)\big(|y - \bar{y}| + |z^{i} - \bar{z}^{i}| + \dbE_t[|\phi_{r} - \bar{\phi}_{r}|]\big)\\
		&\q \ \quad+\big[1 + |z|^{\alpha} + |\bar{z}|^{\alpha} + (\dbE_t[|\psi_{\bar{r}}|])^{\alpha} + (\dbE_t[|\bar{\psi}_{\bar{r}}|])^{\alpha}\big]\big( \sum_{j\neq i}|z^j - \bar{z}^j| + \dbE_t[|\psi_{\bar{r}} - \bar{\psi}_{\bar{r}}|] \big)\Big].
\end{align*}
\item [$\rm(iii)$] There exist some positive constants $M_{1}$, $M_{2}$, and $M_{3}$ such that
\begin{align}\label{121201}
\|\xi\|_{\mathcal{H}^\i_{\dbF}(T,T+K; \dbR^n)}\leq M_1,\quad \|\eta\|_{\mathcal{Z}^2_{\dbF}(T,T+K;\dbR^{n\times d})}\leq M_2, \quad\hb{and}\quad \Big\|\int_{0}^{T}\theta_{t}dt\Big\|_{\i} \leq M_3.
\end{align}
 
\end{itemize}
\end{assumption}

\begin{remark} \rm
In \autoref{assumption1}, there is no essential difference in replacing the terms $\sum_{j \neq i}|z^{j}|^{1+\alpha}$ and $\sum_{j\neq i}|z^j - \bar{z}^j|$ with $|z|^{1+\alpha}$ and $|z - \bar{z}|$, respectively.
\end{remark} 

\begin{remark}  \rm 
	\autoref{assumption1} is weaker than Assumption 4.1 of  Hu--Li--Wen \cite{hu2021anticipated} when $n=1$.
	 Moreover,
for $i= 1, ..., n$, $t \in [0, T]$, $y\in \dbR^{n}$,  $z\in \dbR^{n\times d}$, $\phi\in \mathcal{H}^2_{\dbF}(t,T + K;\dbR^{n})$, $\psi\in \mathcal{H}^2_{\dbF}(t,T + K;\dbR^{n\times d})$, the following generator satisfies \autoref{assumption1}:
\begin{align*}
f^{i}(t, y, z, \phi_{r}, \psi_{\bar{r}}) = (|y|^3 + \cos |z^i|)|z| + |z|^{\frac{3}{2}} + |z^i|^2 + \big(\dbE_t[|\phi_{r}|]\big)^3\sin \big(\dbE_{t}[|\psi_{\bar{r}}|]\big) 
 + \big(\dbE_{t}[|\psi_{\bar{r}}|]\big)^{\frac{4}{3}}.
\end{align*}
\end{remark}


In order to show the existence and uniqueness of  local  solutions of  BSDE (\ref{eq::1.1})  with bounded terminal value, we introduce the following space:
\begin{align*}
	&\mathcal{A}(0, T+K) 
	\triangleq \Big\{(U, V) \in \mathcal{H}^\i_{\dbF}(0,T+K;\dbR^{n}) \times \mathcal{Z}^2_{\dbF}(0,T+K;\dbR^{n\times d})\\
	&\qq\qq\qq\q\  \Big| U_t = \xi_t,\quad V_t = \eta_t, \quad t \in [T, T+K] \Big\}.
\end{align*}
Note that in $\mathcal{A}(0, T+K)$, the values of $(U, V)$ are determined by $(\xi,\eta)$ when $t\in[T,T+K]$.
Additionally, we define the following set using some positive constants $K_1, K_2$ and $\e$:
\begin{align}\label{eq:2.7}
	\mathcal{B}_{\e}(K_1, K_2) \triangleq \Big\{(Y, Z) \in \mathcal{A}(T-\e, T+K) \Big| \|Y\|_{\mathcal{H}^\i_{\dbF}(T - \e,T+K)} \leq K_1, \  \|Z\|^2_{\mathcal{Z}^2_{\dbF}(T-\e,T+K)} \leq K_2\Big\} 
\end{align}
equipped with the norm 
\begin{align*}
	\|(Y, Z)\|_{\mathcal{B}_{\e}(K_1, K_2)} \triangleq \Big\{\|Y\|^2_{\mathcal{H}^\i_{\dbF}(T - \e,T+K)} + \|Z\|_{\mathcal{Z}^2_{\dbF}(T-\e,T+K)}^2\Big\}^{\frac{1}{2}}.
\end{align*}

The following is the main result of this section, which implies the existence and uniqueness of local solutions to  BSDE (\ref{eq::1.1}). 

\begin{theorem}\label{thm:2.2} \sl 
	Under \autoref{assumption1},  there exists a positive constant $\e$ depending only on $(n, \gamma, \lambda, \lambda_0, L, M_1, M_2, M_3, \alpha, T)$ and functions $\rho(\cdot), \rho_{0}(\cdot)$
	such that on the interval $[T-\e, T+K]$, BSDE \rf{eq::1.1}  has  a unique  adapted solution $(Y, Z) \in \mathcal{B}_{\e}(K_1, K_2)$ with 
	\begin{align*}
		K_1 \triangleq 	\frac{2n}{\gamma}\ln2 + 2(n+1)M_1 + 2nM_3 \quad \hb{and} \quad K_2 \triangleq \frac{2n}{\gamma^2} e^{2\gamma M_1} + 4M_2+ \frac{2n(1+2M_{3})}{\gamma}e^{2\gamma K_1} .
	\end{align*}
\end{theorem}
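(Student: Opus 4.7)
The plan is to construct $(Y,Z)$ on $[T-\e,T+K]$ by a Banach fixed point argument on $\mathcal{B}_\e(K_1,K_2)$, exploiting the diagonal quadratic structure of \autoref{assumption1} to decouple each iterate into $n$ scalar quadratic BSDEs solvable by \autoref{prop:2.1}. For any $(U,V)\in\mathcal{B}_\e(K_1,K_2)$, I would define the image $(Y,Z)=\Gamma(U,V)$ by setting $(Y_t,Z_t)=(\xi_t,\eta_t)$ on $[T,T+K]$ and, on $[T-\e,T]$, by solving componentwise for each $i\in\{1,\dots,n\}$ the one-dimensional BSDE
\begin{equation*}
Y^i_t = \xi^i_T + \int_t^T f^i\big(s, U_s, V^{(i,Z^i)}_s, U_{s+\delta_s}, V_{s+\zeta_s}\big)\,ds - \int_t^T Z^i_s\,dW_s,
\end{equation*}
where $V^{(i,Z^i)}_s$ is the $n\times d$ matrix obtained from $V_s$ by overwriting its $i$-th row by $Z^i_s$. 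Because $f^i$ is quadratic only in that $i$-th row while the off-diagonal rows $V^j$ ($j\neq i$) are frozen, the growth and Lipschitz hypotheses of \autoref{prop:2.1} are satisfied ($\sum_{j\neq i}|V^j|^{1+\alpha}\le n|V|^{1+\alpha}$ absorbs the sub-quadratic off-diagonal part and (1.4) converts the anticipated integrals into standard ones), so each one-dimensional BSDE admits a unique solution in $L^\infty_\dbF(T-\e,T;\dbR)\times\mathcal{Z}^2_\dbF(T-\e,T;\dbR^d)$.

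To verify $\Gamma(\mathcal{B}_\e(K_1,K_2))\subset\mathcal{B}_\e(K_1,K_2)$, I would apply the bounds (2.6)--(2.7) componentwise. Each estimate splits into a terminal-data part controlled by $\|\xi^i\|_\infty\le M_1$, $M_3$, $e^{2\gamma M_1}$, and $e^{2\gamma\|Y^i\|_\infty}\le e^{2\gamma K_1}$, and a time-length remainder of the form $C\,x^{(1+\alpha)/(1-\alpha)}(T-t)\le C\e$, the quantity $x$ being uniformly bounded on $\mathcal{B}_\e(K_1,K_2)$ (through $\rho+\rho_0$ applied to $K_1$ and through $\|V\|^2_{\mathcal{Z}^2}\le K_2$). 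Summing over $i$ via $|Y|\le\sum_i|Y^i|$ and $|Z|^2=\sum_i|Z^i|^2$, and adjoining the $(\xi,\eta)$ contribution on $[T,T+K]$ (bounded by $M_1$ and a multiple of $\|\eta\|_{\mathcal{Z}^2}$), one recovers exactly the numerical allowances encoded in $K_1,K_2$ provided $\e$ is chosen so small that each time-length remainder is at most half of its share.

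For the contraction, take $(U_k,V_k)\in\mathcal{B}_\e$ with $(Y_k,Z_k)=\Gamma(U_k,V_k)$, $k=1,2$, and denote differences by $\hat U,\hat V,\hat Y,\hat Z$. Applying Itô's formula componentwise to $|\hat Y^i|^2$ together with the diagonal linearization
\begin{equation*}
\tfrac{\gamma}{2}\big(|Z^i_1|^2-|Z^i_2|^2\big)=\tfrac{\gamma}{2}\big(Z^i_1+Z^i_2\big)\cdot\hat Z^i
\end{equation*}
permits a Girsanov change of measure driven by the $BMO$ martingale $(Z^i_1+Z^i_2)\cdot W$, whose norm is controlled by $2\sqrt{K_2}$; \autoref{prop:1.4} then keeps the $\mathcal{Z}^2$-norms equivalent under the new measure. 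Feeding in the Lipschitz bound of \autoref{assumption1}(ii) and using (1.4) for the anticipated-term increments, one arrives at an estimate
\begin{equation*}
\|(\hat Y,\hat Z)\|^2_{\mathcal{B}_\e}\le C(K_1,K_2)\,\omega(\e)\,\|(\hat U,\hat V)\|^2_{\mathcal{B}_\e},\qquad \omega(\e)\to 0\text{ as }\e\to 0,
\end{equation*}
so shrinking $\e$ makes $\Gamma$ a strict contraction and the Banach fixed point theorem yields the unique solution.

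The main obstacle will be the off-diagonal sub-quadratic mixing in \autoref{assumption1}(ii), namely the terms $[|z|^\alpha+|\bar z|^\alpha]\sum_{j\neq i}|z^j-\bar z^j|$, which do not linearize through the Girsanov step designed for the diagonal quadratic contribution. I plan to split them by Young's inequality: the $\hat Z^j$ factor is absorbed into $\|\hat V\|_{\mathcal{Z}^2}$, while the $|Z|^{2\alpha}$ factor (with $\alpha<1$) has uniformly bounded $BMO$-exponential moments by \autoref{John-Nirenberg inequality} combined with the bound $\|Z_k\|_{\mathcal{Z}^2}\le\sqrt{K_2}$. Coordinating these splits, along with the analogous Young's-inequality treatment of the anticipated-term Lipschitz contributions via (1.4), so that the global prefactor $\omega(\e)$ genuinely vanishes as $\e\to 0$, is the crux of the argument.
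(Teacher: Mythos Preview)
Your architecture matches the paper's exactly: the same Picard map (freeze $(U,V)$, overwrite only the $i$-th row of $V$ by the unknown $Z^i$, solve $n$ decoupled scalar quadratic BSDEs via \autoref{prop:2.1}), the same stability check from the bounds \eqref{2.6}--\eqref{eq2.7}, and contraction via a componentwise Girsanov transform followed by $BMO$-norm transfer between $\dbP$ and $\dbQ^i$ through \autoref{prop:1.4}.

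Two points in your contraction sketch need correction. First, the generator is not literally $\tfrac{\gamma}{2}|z^i|^2$ plus lower order, so the Girsanov drift is not $\tfrac{\gamma}{2}(Z^i_1+Z^i_2)$; you must linearise the full $z^i$-increment through the quotient
\[
\Lambda^i_s=\frac{(\hat Z^i_s)^\top}{|\hat Z^i_s|^2}\Big[f^i(\cdots,Z^i_{1,s},\cdots)-f^i(\cdots,Z^i_{2,s},\cdots)\Big]
\]
and bound $|\Lambda^i_s|$ via \autoref{assumption1}(ii), which gives a $BMO$ drift controlled by $K_1,K_2$. Second, and more seriously, neither Young's inequality nor \autoref{John-Nirenberg inequality} produces the required $\e$-smallness for the sub-quadratic cross terms: an additive Young split of $|Z|^\alpha|\hat V|$ destroys the multiplicative structure needed for a contraction, and John--Nirenberg yields bounded exponential moments, not a factor that vanishes with $\e$. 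The paper's mechanism is H\"older in time,
\[
\int_t^T|Z_s|^{2\alpha}\,ds\le (T-t)^{1-\alpha}\Big(\int_t^T|Z_s|^2\,ds\Big)^{\alpha}\le \e^{1-\alpha}K_2^{\alpha},
\]
combined with \autoref{prop:1.3} (passage to $BMO_4$) and \autoref{prop:1.4} to control the resulting fourth-moment factors under $\dbQ^i$. This is what makes $\omega(\e)=C_1\e+C_2\e^{1-\alpha}\to0$, and it is the missing ingredient in your plan.
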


\begin{proof}

The proof will be divided into three steps.
In step one, we will construct a mapping in a suitable Banach space.
In step two, we will show that the constructed mapping is stable in a small ball.
In step three, we will prove that this mapping is a contraction. 

\ms

\noindent {\it Step 1: Construction of the mapping $\Pi$} 


For any given $(U, V) \in \mathcal{A}(0, T+K)$, we consider the following  BSDEs: for each $i =1,...,n$,
\begin{equation}\label{eq:2.141}
	\left\{\begin{aligned}
		&Y_{t}^i = \xi_{T}^{i} + \int_{t}^{T} f^{i}(s, U_s, V_s(Z_{s}^{i}; i), U_{s+\delta_s}, V_{s+\zeta_s})ds - \int_{t}^{T}Z^{i}_{s}dW_s,\quad t\in [0, T]; \\
		&Y^{i}_t = \xi^i_t, \quad    Z^{i}_t = \eta^i_t,   \quad    t\in[T, T+K],
	\end{aligned}\right.\end{equation}
where $V(z; i)$ denotes the matrix obtained from $V$ by replacing its $i$-th row with $z$.
For simplicity of  presentation, we denote  
\begin{align*}
f^{i, U, V}(t, z) = f^{i}(t, U_t, V_t(z; i), U_{t+\delta_t}, V_{t+\zeta_t}), \quad \quad(t, z)\in [0, T]\times \dbR^{ d}.
\end{align*} 
Then, under \autoref{assumption1}, $d\dbP \times dt$-a.e., for any $t\in[0,T]$ and $z, \bar{z}\in \dbR^{ d},$ we have
\begin{align*}
|f^{i, U, V}(t, z)| \leq 
\theta_{t} + \rho(|U_{t}|) +  n\lambda|V_{t}|^{1+\alpha}+ \rho_{0}(\dbE_t[|U_{t+\delta_t}|]) +\lambda_{0}\big(\dbE_{t}[|V_{t+\zeta_t}|]\big)^{1+\alpha}+ \frac{\gamma}{2}|z|^2
\end{align*} 
and 
\begin{align*}
|f^{i, U, V}(t, z) - f^{i, U, V}(t, \bar{z})| \leq \rho(\|U\|_{\mathcal{H}^\i_{\dbF}(0,T+K;\dbR^n)})\big(1 + |z| + |\bar{z}| + 2|V_{t}| + 2\dbE_{t}[|V_{t+\zeta_t}|]\big)|z - \bar{z}|.	
\end{align*}
Hence, according to  \autoref{prop:2.1},   BSDE  (\ref{eq:2.141}) possesses a unique adapted solution $(Y, Z)\in \mathcal{H}^\i_{\dbF}(0,T;\dbR^{n}) \times \mathcal{Z}^2_{\dbF}(0,T;\dbR^{n\times d})$ on the interval $[0,T]$. 
Moreover, on the interval $[0,T+K]$,  we have
 $$\|Y\|_{\mathcal{H}^\i_{\dbF}(0,T+K; \dbR^n)} \leq \|Y\|_{\mathcal{H}^\i_{\dbF}(0,T; \dbR^n)} + \|\xi\|_{\mathcal{H}^\i_{\dbF}(T,T+K; \dbR^n)}\leq \|Y\|_{\mathcal{H}^\i_{\dbF}(0,T; \dbR^n)} + M_1$$ and 
\begin{align}\label{eq:2.15}
		&\|Z\|^{2}_{\mathcal{Z}^2_{\dbF}(0,T + K; \dbR^{n\times d})}
		 = \mathop{\sup}\limits_{\tau \in \mathscr{T}[0,T+K]} \Big\|\dbE_{\tau}\Big[\int_\tau^{T+K}|Z_s|^2ds\Big]\Big\|_{\i}\nonumber\\
		&\q \leq \mathop{\sup}\limits_{\tau \in \mathscr{T}[0,T]} \Big\{\Big\|\dbE_{\tau}\Big[\int_\tau^{T+K}|Z_s|^2ds\Big]\Big\|_{\i}\Big\} +\mathop{\sup}\limits_{\tau \in \mathscr{T}[T,T+K]}\Big\{\Big\|\dbE_{\tau}\Big[\int_\tau^{T+K}|Z_s|^2ds\Big]\Big\|_{\i}\Big\}\nonumber\\
		&\q \leq \mathop{\sup}\limits_{\tau \in \mathscr{T}[0,T]} \Big\{\Big\|\dbE_{\tau}\Big[\int_\tau^{T}|Z_s|^2ds\Big]\Big\|_{\i} +\Big\|\dbE_{\tau}\Big[\int_{T}^{T+K}|\eta_s|^2ds\Big]\Big\|_{\i}\Big\} +M_2\nonumber\\
	&\q	=
		 \mathop{\sup}\limits_{\tau \in \mathscr{T}[0,T]} \Big\{\Big\|\dbE_{\tau}\Big[\int_\tau^{T}|Z_s|^2ds\Big]\Big\|_{\i} +\Big\|\dbE_{\tau}\Big\{\dbE_{T}\Big[\int_{T}^{T+K}|\eta_s|^2ds\Big]\Big\}\Big\|_{\i}\Big\} +M_2\nonumber\\
		&\q 	\leq
	 \mathop{\sup}\limits_{\tau \in \mathscr{T}[0,T]} \bigg\{\Big\|\dbE_{\tau}\Big[\int_\tau^{T}|Z_s|^2ds\Big]\Big\|_{\i} +\Big\|\dbE_{\tau}\mathop{\sup}\limits_{\tilde{\tau} \in \mathscr{T}[T,T+K]}\Big\|\Big\{\dbE_{\tilde{\tau}}\Big[\int_{\tilde{\tau}}^{T+K}|\eta_s|^2ds\Big]\Big\|_{\i}\Big\}\Big\|_{\i}\bigg\} +M_2\nonumber\\
		&\q 	\leq
	\|Z\|^{2}_{\mathcal{Z}^2_{\dbF}(0,T; \dbR^{n\times d})} +2M_2,
\end{align}
%
which implies that $(Y, Z)\in \mathcal{H}^\i_{\dbF}(0,T + K;\dbR^{n}) \times \mathcal{Z}^2_{\dbF}(0,T + K;\dbR^{n\times d})$.
 Therefore, we can define a mapping $\Pi$  as follows:
\begin{align*}
\Pi(U, V) \triangleq (Y, Z), \quad\quad (U, V)\in \mathcal{H}^\i_{\dbF}(0,T+K;\dbR^{n}) \times \mathcal{Z}^2_{\dbF}(0,T+K;\dbR^{n\times d}).
\end{align*}


\noindent{{\it  Step 2:  The mapping $\Pi$ is stable on $\mathcal{B}_{\e}(K_1, K_2)$}}

Note that by \autoref{prop:2.1}, for each $ i = 1, ..., n$,   the following estimates hold:
\begin{align*}
		|Y^{i}_t| \leq&\  \frac{1}{\gamma}\ln2 + \|\xi^{i}_{T}\|_\i + \Big\|\int_{0}^{T}\theta_{s}ds\Big\|_{\i} + C_{\alpha}\gamma^{\frac{1+\alpha}{1-\alpha}}x^{\frac{1+\alpha}{1-\alpha}}(T - t),\\
		 \dbE_{\tau}\Big[\int_{\tau}^{T}|Z^{i}_s|^2 ds\Big]\leq&\  \frac{1}{\gamma}\exp\{{2\gamma\|Y^{i}\|_{\mathcal{H}^\i_{\dbF}(0,T; \dbR)}}\}\Big\{1+ \Big\|\int_{0}^{T}\theta_{s}ds\Big\|_{\i} + 2C_{\alpha}
		 x^{\frac{1+\alpha}{1-\alpha}}(T-t)\Big\} \\
		 &\ + \frac{1}{\gamma^2}\exp\{ 2\gamma \|\xi^{i}_{T}\|_{\i}\}, \q \forall t\in [0, T], \q \forall \tau \in \mathscr{T}[t,T],
\end{align*}
	where
\begin{align*}
	&C_{\alpha} \triangleq \frac{1-\alpha}{2}(1+\alpha)^{\frac{1+\alpha}{1-\alpha}} \\ &x\triangleq 2\big[2 + (\rho+\rho_{0})\big(\|U\|_{\mathcal{H}^\i_{\dbF}(t,T+K; \dbR^n)}\big) \big]^2T + 2\big(1+n\lambda+(\lambda_{0} + 1)L\big)^2\|V\|^2_{\mathcal{Z}^2_{\dbF}(t,T+K; \dbR^{n\times d})}.
	\end{align*}
Then, we have that
\begin{align*}
&\|Y\|_{\mathcal{H}^\i_{\dbF}(t,T+K; \dbR^n)} \leq \|Y\|_{\mathcal{H}^\i_{\dbF}(t,T; \dbR^n)}+M_1 \leq \frac{n}{\gamma}\ln2 + (n+1)M_1 + nM_{3} + nC_{\alpha} \gamma^{\frac{1+\alpha}{1-\alpha}}x^{\frac{1+\alpha}{1-\alpha}}(T - t)
\end{align*}
and 
\begin{align*}
&\|Z\|^{2}_{\mathcal{Z}^2_{\dbF}(t,T+K; \dbR^{n\times d})}
\leq \|Z\|^{2}_{\mathcal{Z}^2_{\dbF}(t,T; \dbR^{n\times d})} + 2M_2\\
&\leq \frac{n}{\gamma^{2}}e^{2\gamma M_1 } +2M_2 +\frac{n}{\gamma}\exp\{{2\gamma\|Y\|_{\mathcal{H}^\i_{\dbF}(t,T; \dbR^{n})}}\}\Big\{1+ 2M_{3} + 2C_{\alpha}x^{\frac{1+\alpha}{1-\alpha}}(T-t)\Big\}.
\end{align*}
Now, we define
\begin{align*}
K_1 \triangleq 	\frac{2n}{\gamma}\ln2 + 2(n+1)M_1 + 2nM_3 \quad \hb{and} \quad K_2 \triangleq \frac{2n}{\gamma^2} e^{2\gamma M_1} + 4M_2+ \frac{2n(1+2M_{3})}{\gamma}e^{2\gamma K_1} .
\end{align*}
Additionally, let $x_1$ and $x_2$  denote the unique solutions to the following equations
\begin{equation*}
nC_{\alpha} \gamma^{\frac{1+\alpha}{1-\alpha}}\Big[2\Big(2 + (\rho+\rho_{0})(K_1) \Big)^2T + 2\Big(1+n\lambda+(\lambda_{0} + 1)L\Big)^2K_2\Big]^{\frac{1+\alpha}{1-\alpha}} x_1 = \frac{K_1}{2}
\end{equation*}
and
\begin{align*}
C_{\alpha}\Big[2\Big(2 + (\rho+\rho_{0})(K_1) \Big)^2T + 2\Big(1+n\lambda+(\lambda_{0} + 1)L\Big)^2K_2\Big]^{\frac{1+\alpha}{1-\alpha}}x_2 
= \frac{\gamma K_2}{4n}e^{-2\gamma K_1},
\end{align*}
respectively. Therefore,  we have that   for any $\e \in (0, x_0]$ with $x_0 \deq  x_1 \wedge x_2>0$, if
\begin{align*}
\|U\|_{\mathcal{H}^\i_{\dbF}(T-\e,T+K; \dbR^n)}\leq K_1 \quad {\rm and} \quad \|V\|^2_{\mathcal{Z}^2_{\dbF}(T-\e,T+K; \dbR^{n\times d})} \leq K_2,
\end{align*}
then
\begin{align*}
		\|Y\|_{\mathcal{H}^\i_{\dbF}(T-\e,T+K; \dbR^n)}\leq K_1 \quad {\rm and} \quad \|Z\|^2_{\mathcal{Z}^2_{\dbF}(T-\e,T+K; \dbR^{n\times d})} \leq K_2,
\end{align*}
which implies that   $\Pi$ is stable in the closed convex set $\mathcal{B}_{\e}(K_1, K_2)$ for each $\e \in (0, x_0]$, i.e., 
\begin{align*}
\Pi(U, V) \in \mathcal{B}_{\e}(K_1, K_2), \quad\ \forall(U,V)\in \mathcal{B}_{\e}(K_1, K_2).
\end{align*}

\noindent{\it Step 3:   The mapping $\Pi$ is  a contraction   on $\mathcal{B}_{\e}(K_1, K_2)$}

Denote that for any given $\e\in (0, x_0]$, $(U, V), (\tilde{U},\tilde{V})\in \mathcal{B}_{\e}(K_1, K_2),$ 
\begin{align*}
(Y, Z) &\triangleq \Pi(U,V), \quad (\tilde{Y}, \tilde{Z}) \triangleq \Pi(\tilde{U}, \tilde{V}), \q	\Delta Y \triangleq Y- \tilde{Y}, \\
 \Delta Z &\triangleq Z - \tilde{Z},
 \quad \Delta U\triangleq U - \tilde{U}, \quad \Delta V \triangleq V- \tilde{V}.
\end{align*}
Then,  for  $i = 1, ..., n$ and $\tau \in \mathscr{T}[T-\e, T]$, we have
\begin{equation}\left\{\begin{aligned}
		&Y^{i}_{\tau} = \xi^{i}_{T} + \int_{\tau}^{T}f^{i}(s, U_s, V_s(Z_{s}^{i}; i), U_{s+\delta_s}, V_{s+\zeta_s})ds - \int_{\tau}^{T}Z^{i}_s dW_s, \\
		&\tilde{Y}_{\tau}^{i} = \xi^{i}_{T} + \int_{\tau}^{T}f^{i}(s, \tilde{U}_s, \tilde{V}_s(\tilde{Z}_{s}^{i}; i), \tilde{U}_{s+\delta_s}, \tilde{V}_{s+\zeta_s})ds - \int_{\tau}^{T}\tilde{Z}^{i}_s dW_s.
	\end{aligned}\right.\end{equation}
Additionally, note that for all $\tau\in \mathscr{T}[T, T+K],$ 
\begin{align*}
Y_\tau = \tilde{Y}_\tau = U_\tau = \tilde{U}_\tau = \xi_{\tau}\q\hbox{and} \quad Z_\tau = \tilde{Z}_\tau = V_\tau = \tilde{V}_\tau = \eta_\tau.
\end{align*}
Then,  we have
\begin{align}\label{eq:2.31}
\Delta Y_{\tau}^{i} = \int_{\tau}^{T} (N_{s}^{1,i} + N_{s}^{2,i}) ds - \int_{\tau}^{T}\Delta Z_{s}^{i}dW_s, \q \tau\in \mathscr{T}[T-\e,T],
\end{align}
where 
\begin{align*}
N_{s}^{1,i} =&\ f^{i}(s, U_s, V_s(Z_{s}^{i}; i), U_{s+\delta_s}, V_{s+\zeta_s}) - f^{i}(s, U_s, V_s(\tilde{Z}_{s}^{i}; i), U_{s+\delta_s}, V_{s+\zeta_s}), \\
	N_{s}^{2,i} = &\ f^{i}(s, U_s, V_s(\tilde{Z}_{s}^{i}; i), U_{s+\delta_s}, V_{s+\zeta_s}) - f^{i}(s, \tilde{U}_s, \tilde{V}_s(\tilde{Z}_{s}^{i}; i), \tilde{U}_{s+\delta_s}, \tilde{V}_{s+\zeta_s}).
\end{align*}
Notice that for $i = 1, ..., n,$ the term $N_{s}^{1,i}$ can be written as $N_{s}^{1,i} = (Z_{s}^{i} - \tilde{Z}_{s}^{i})\Lambda_{s}^{i}$, where
\begin{equation*}\Lambda_{s}^{i}=\left\{\begin{aligned}
		&\frac{(Z_{s}^{i}-\tilde{Z}_{s}^{i})^{\top}}{|Z_{s}^{i}-\tilde{Z}_{s}^{i}|^2}\Big(f^{i}(s, U_s, V_s(Z_{s}^{i}; i), U_{s+\delta_s}, V_{s+\zeta_s}) \\
		&\quad\quad\quad\quad\quad\quad\ - f^{i}(s, U_s, V_s(\tilde{Z}_{s}^{i}; i), U_{s+\delta_s}, V_{s+\zeta_s})\Big),\quad \hb{if}\ Z_{s}^{i} \neq \tilde{Z}_{s}^{i}; \\
		&0, \quad\quad\quad\quad\quad\quad\quad\quad\quad\quad\quad\quad\quad\quad\quad\quad\quad\quad\quad\q\q\q \hb{if}\ Z_{s}^{i} = \tilde{Z}_{s}^{i}.\\
	\end{aligned}\right.\end{equation*}
From \autoref{assumption1}, one has
\begin{align}\label{eq:2.34}
|\Lambda_{s}^{i}| &\leq \rho(|U_{s}| \vee \dbE_{s}[|U_{s+\delta_s}|])\big(1 + 2|V_s|+|Z_s| + |\tilde{Z}_s| + 2\dbE_{s}[|V_{s+\zeta_{s}}|]\big), \q \forall s\in [t, T]\q \forall t\in [0, T].
\end{align}
Then, for $i = 1, ..., n$, we define
\begin{align}\label{eq:2.35}
\tilde{W}_{t}^{i} \triangleq W_t - \int_{0}^{t}\Lambda_{s}^{i}ds 
\quad \hb{and} \quad d\dbQ^{i}\triangleq \mathcal{E}(P^{i})_T d\dbP,
\end{align}
where 
\begin{align*}
P_{t}^{i} \triangleq \int_{0}^{t}(\Lambda^{i}_{s})^{\top}dW_s, \q \hb{and} \q
\mathcal{E}(P^{i})_{t} \triangleq \exp\Big\{\int_{0}^{t} (\Lambda^{i}_{s})^{\top}dW_{s} - \frac{1}{2}\int_{0}^{t}|\Lambda^{i}_{s}|^2ds\Big\}.
\end{align*}
On one hand, by the similar deduction of \eqref{eq::3.6} and the definition of processes $(U, V)$, $(\tilde{U},\tilde{V})$, $(U, V)$, and  $(\tilde{U},\tilde{V})$, 
it is easy to verify that for each $i = 1, 2, ..., n$, the process $P_{t}^{i}$, $t\in [0, T]$, is a scalar BMO martingale under $\dbP$, which implies that $\mathcal{E}(P^{i})_{t}$
is a uniform integrable martingale (see  Kazamaki \cite[Theorem 2.3]{Kazamaki N}). Thus $\dbE^{\dbP}[\mathcal{E}(P^{i})_{T}] = 1$. Therefore, we can check that $\dbQ^{i}$ is a probability measure. 
On the other hand, by Theorem 5.22 and item (c) on page 136 of  Le Gall \cite{LeGall},
the process $\tilde{W}^{i}$ is a Brownian motion under $\dbQ^i.$ 
Note that
\begin{align*}
G_{t}^{i} \triangleq \int_{0}^{t}\Delta Z_{s}^{i}dW_{s} \q \hb{and} \q 
\tilde{G}_{t}^{i} \triangleq \int_{0}^{t}\Delta Z_{s}^{i}d\tilde{W}^{i}_{s}, \q t\in [0, T]
\end{align*}
are two BMO martingale under the two probability measures $\dbP$ and $\dbQ^i$, respectively.
Therefore,  BSDE (\ref{eq:2.31}) can be rewritten as follows:
\begin{align*}
	\Delta Y_{\tau}^{i} + \int_{\tau}^{T}\Delta Z_{s}^{i}d\tilde{W}^{i}_s= \int_{\tau}^{T} N_{s}^{2,i} ds,\q \tau\in \mathscr{T}[T-\e,T].
\end{align*}
Taking the square and the conditional expectation with respect to $\dbQ^i$ on both sides of the above equation, we deduce that
\begin{align}\label{eq:2.37}
|\Delta Y_{\tau}^{i}|^2 + \dbE_{\tau}^{\dbQ^{i}} \int_{\tau}^{T}|\Delta Z_{s}^{i}|^2ds = \dbE_{\tau}^{\dbQ^{i}}
\Big\{\int_{\tau}^{T} N_{s}^{2,i} ds\Big\}^2,\q \tau\in \mathscr{T}[T-\e,T].
\end{align}
Note that, from (\ref{eq:2.34}),  for each $\e \in (0, x_0]$, 
\begin{align*}
\dbE_{\tau}\int_{\tau}^{T}|\Lambda_{s}^{i}|^2ds
&\leq\ 4\rho(K_1)^2\Big\{x_0 + 4\|V\|^2_{\mathcal{Z}^2_{\dbF}(T-\e, T;\dbR^{n\times d})} + \|Z\|^2_{\mathcal{Z}^2_{\dbF}(T-\e, T;\dbR^{n\times d})} + \|\tilde{Z}\|^2_{\mathcal{Z}^2_{\dbF}(T-\e, T;\dbR^{n\times d})} \\
&\q\q+ 4L^2\|V\|^2_{\mathcal{Z}^2_{\dbF}(T-\e, T+K;\dbR^{n\times d})} \Big\}
 \leq\ 4\rho(K_1)^2 (x_0 + 10K_{2})\triangleq \bar{K}, \q \tau \in \mathscr{T}[T-\e,T].
\end{align*}
Thus $\|\Lambda^{i}\cdot W\|_{BMO (\dbP)} \leq \bar{K}$.
In view of \autoref{prop:1.4}, there exist positive constants $c_1$ and $c_2$ depending only on $\bar{K}$ such that for each $X \in \mathcal{Z}^2_{\dbF}(T-\e, T; \dbR^{n\times d})$, one has
\begin{align}\label{eq:::3.27}
	   & \mathop{\sup}\limits_{\tau \in \mathscr{T}[T-\e,T]}\Big\|\dbE_{\tau}^{\dbQ^{i}} \int_{\tau}^{T}|X_s|^2ds \Big\|^{\frac{1}{2}}_{\i}
		=\Big\|X\cdot \tilde{W}^i\Big\|_{BMO(\dbQ^i)}\nonumber \\
		&= \Big\|X\cdot W - \langle X\cdot W, \Lambda^{i}\cdot W\rangle\Big\|_{BMO(\dbQ^i)} \geq c_1 \|X\cdot W\|_{BMO(\dbP)}
\end{align}
and for any $\tau\in \mathscr{T}[T-\e, T]$
\begin{align}\label{eq::3.27}
\dbE_{\tau}^{\dbQ^{i}} \int_{\tau}^{T}|X_s|^2ds 
&\leq \Big\|X\cdot \tilde{W}^i\Big\|^{2}_{BMO(\dbQ^i)}
= \Big\|X\cdot W - \langle X\cdot W, \Lambda^{i}\cdot W\rangle\Big\|^{2}_{BMO(\dbQ^i)} \nonumber\\
&\leq c^2_2 \|X\cdot W\|^2_{BMO(\dbP)}.
\end{align}
Moreover, by virtue of \autoref{prop:1.3}, there is a generic positive constant $L_4$ such that for each $X\cdot W \in BMO(\dbP)$, one has
\begin{align}\label{eq::3.28}
	\|X\cdot \tilde{W}^i\|^2_{BMO_{4}(\dbQ^{i})} \leq L^2_{4}\|X\cdot \tilde{W}^i\|^2_{BMO(\dbQ^i)}.
\end{align}
Then, combining   \autoref{assumption1}, the inequalities $(x^2+y^2)^{\frac{\alpha}{2}} \leq x^{\alpha} + y^{\alpha}$, and 
\begin{align*}
\sum_{j \neq i}|V_{s}^{j} - \tilde{V}_{s}^{j}| = \sqrt{\Big(\sum_{j \neq i}|V_{s}^{j} - \tilde{V}_{s}^{j}|\Big)^2}\leq \sqrt{n\sum_{i=1}^{n}|\Delta V_{s}^{i}|^2} = \sqrt{n}|\Delta V_{s}|,
\end{align*}
we deduce that for any $\tau \in \mathscr{T}[T-\e, T]$ and $s\in [\tau, T]$,
\begin{align*}
|N_{s}^{2,i}| &\leq \rho(K_1) \bigg[\Big(1 +|V_{s}| + |\tilde{V}_{s}| + 2|\tilde{Z}_{s}| + \dbE_{s}[|V_{s+\zeta_s}|] + \dbE_{s}[|\tilde{V}_{s+\zeta_s}|]\Big)2\|\Delta U\|_{\mathcal{H}^\i_{\dbF}(t,T+K; \dbR^n)}+\Big(1 +\\
& \q|V_{s}|^{\alpha}+ |\tilde{V}_{s}|^{\alpha} + 2|\tilde{Z}_{s}|^{\alpha} + \big(\dbE_{s}[|V_{s+\zeta_s}|]\big)^{\alpha} + \big(\dbE_{s}[|\tilde{V}_{s+\zeta_s}|]\big)^{\alpha}\Big)\Big(\sqrt{n}|\Delta V_{s}| + \dbE_{s}[|\Delta V_{s + \zeta_s}|]\Big)\bigg].
\end{align*}
Thus, from equation (\ref{eq:2.37}) and  H\"{o}lder's inequality, we derive that for any $\tau \in \mathscr{T}[T-\e, T]$, 
\begin{align}\label{eq:2.39}
&|\Delta Y_{\tau}^{i}|^2 + \dbE_{\tau}^{\dbQ^{i}}\int_{\tau}^{T}|\Delta Z_{s}^{i}|^2ds \nonumber\\
\leq&\ 12[\rho(K_1)]^2\e\|\Delta U\|^2_{\mathcal{H}^\i_{\dbF}(T-\e,T+K)}\dbE_{\tau}^{\dbQ^{i}}\int_{\tau}^{T}\Big(1 +|V_{s}| + |\tilde{V}_{s}| + 2|\tilde{Z}_{s}| + \dbE_{s}[|V_{s+\zeta_s}|] + \dbE_{s}[|\tilde{V}_{s+\zeta_s}|]\Big)^2ds\nonumber \\
&\ + 3n[\rho(K_1)]^2\dbE_{\tau}^{\dbQ^{i}}\Big\{\int_{\tau}^{T} \Big(1 +|V_{s}|^{\alpha} + |\tilde{V}_{s}|^{\alpha} + 2|\tilde{Z}_{s}|^{\alpha} + \big(\dbE_{s}[|V_{s+\zeta_s}|]\big)^{\alpha} + \big(\dbE_{s}[|\tilde{V}_{s+\zeta_s}|]\big)^{\alpha}\Big) |\Delta V_s| ds\Big\}^2\nonumber \\
&\ + 3[\rho(K_1)]^2\dbE_{\tau}^{\dbQ^{i}}\Big\{\int_{\tau}^{T} \Big(1 +|V_{s}|^{\alpha} + |\tilde{V}_{s}|^{\alpha} + 2|\tilde{Z}_{s}|^{\alpha} + (\dbE_{s}[|V_{s+\zeta_s}|])^{\alpha} \nonumber\\
&\ \q\q\q\q\qq\ \qq\qq+\big(\dbE_{s}[|\tilde{V}_{s+\zeta_s}|]\big)^{\alpha}\Big) \dbE_{s}[|\Delta V_{s + \zeta_s}|] ds\Big\}^2 \nonumber \\
\deq &\ 12[\rho(K_1)]^2\e\|\Delta U\|^2_{\mathcal{H}^\i_{\dbF}(T-\e,T+K)} I_1 + 3n[\rho(K_1)]^2I_2 + 3[\rho(K_1)]^2I_3.
\end{align}
For the term $I_1$, from (\ref{eq::3.27}) and (\ref{eq::3.2}), we have that

\begin{align}\label{eq::3.30}
I_1 \leq&\ 7\dbE_{\tau}^{\dbQ^{i}} \int_{\tau}^{T}\big( 1 +|V_{s}|^2 + |\tilde{V}_{s}|^2 + 2|\tilde{Z}_{s}|^2 + \dbE_{s}[|V_{s+\zeta_s}|^2] + \dbE_{s}[|\tilde{V}_{s+\zeta_s}|^2]\big)ds \nonumber\\
\leq&\ 7c_2^2\dbE_{\tau}\int_{\tau}^{T}\big( 1 +|V_{s}|^2 + |\tilde{V}_{s}|^2 + 2|\tilde{Z}_{s}|^2 + \dbE_{s}[|V_{s+\zeta_s}|^2] + \dbE_{s}[|\tilde{V}_{s+\zeta_s}|^2]\big) ds \nonumber\\
\leq&\ 7c_2^2\dbE_{\tau}\int_{\tau}^{T}\big( 1 +|V_{s}|^2 + |\tilde{V}_{s}|^2 + 2|\tilde{Z}_{s}|^2\big) ds
 + 7c^2_2L\dbE_{\tau}\int_{\tau}^{T +K}\big( |V_{s}|^2 + |\tilde{V}_{s}|^2\big) ds \nonumber\\
\leq&\  14c_2^2(L+1)\Big\{x_0 + \|V\|^2_{\mathcal{Z}^2_{\dbF}(T-\e,T + K;\dbR^{n\times d})} + \|\tilde{V}\|^2_{\mathcal{Z}^2_{\dbF}(T-\e,T + K;\dbR^{n\times d})} + 2\|\tilde{Z}\|^2_{\mathcal{Z}^2_{\dbF}(T-\e,T + K;\dbR^{n\times d})}\Big\} \nonumber \\
\leq&\ 14c_2^2(L+1)(x_0 + 4K_2).
\end{align}
For the term $I_3$, using H\"{o}lder's inequality again, we have that
\begin{equation}\label{eq:2.28}
\begin{aligned}
I_3\leq&\ \dbE_{\tau}^{\dbQ^{i}}\bigg\{\int_{\tau}^{T}\Big(1 + |V_{s}|^{\alpha} + |\tilde{V}_{s}|^{\alpha} + 2|\tilde{Z}_{s}|^{\alpha} +\big(\dbE_s[|V_{s+\zeta_s}|]\big)^{\alpha}
+\big(\dbE_s[|\tilde{V}_{s+\zeta_s}|]\big)^{\alpha}\Big)^2ds\\
&\qq\q  \cd \int_{\tau}^{T}\big(\dbE_{s}[|\Delta V_{s+\zeta_s}|]\big)^2ds\bigg\}\\
\leq&\ 7\bigg\{\dbE_{\tau}^{\dbQ^{i}}\Big[\Big(\int_{\tau}^{T}1 + |V_{s}|^{2\alpha} + |\tilde{V}_{s}|^{2\alpha} + 2|\tilde{Z}_{s}|^{2\alpha}+\big(\dbE_s[|V_{s+\zeta_s}|]\big)^{2\alpha}
+ \big(\dbE_s[|\tilde{V}_{s+\zeta_s}|]\big)^{2\alpha}ds\Big)^2\Big]\bigg\}^{\frac{1}{2}}\\
&\ \q\cd \bigg\{\dbE_{\tau}^{\dbQ^{i}}\Big[\Big(\int_{\tau}^{T}\big(\dbE_{s}[|\Delta V_{s+\zeta_s}|]\big)^2ds\Big)^2\Big]\bigg\}^{\frac{1}{2}}.
\end{aligned}
\end{equation}
In order to estimate the last term in \rf{eq:2.28}, we denote that
\begin{align*}
M_{t} = \int_{0}^{t}\dbE_{s}[|\Delta V_{s+\zeta_s}|]dW_{s}.
\end{align*}
Then 
\begin{align*}
&\|M\|^2_{BMO^{[T-\e, T]}(\dbP)} = \mathop{\sup}\limits_{\tau \in \mathscr{T}[T-\e,T]}\Big\|\dbE_{\tau}\Big[\int_{\tau}^{T}\Big(\dbE_{s}[|\Delta V_{s+\zeta_s}|]\Big)^2ds\Big]\Big\|_{\i} \leq \mathop{\sup}\limits_{\tau \in \mathscr{T}[T-\e,T]}\Big\|\dbE_{\tau}\Big[\int_{\tau}^{T}|\Delta V_{s+\zeta_s}|^2ds\Big]\Big\|_{\i}\\
&\leq  L\mathop{\sup}\limits_{\tau \in \mathscr{T}[T-\e,T]}\Big\|\dbE_{\tau}\Big[\int_{\tau}^{T+K}|\Delta V_{s}|^2ds\Big]\Big\|_{\i} \leq L\|\Delta V\|^{2}_{\mathcal{Z}^2_{\dbF}(T-\e,T + K; \dbR^{n\times d})}.
\end{align*}
Moreover, from (\ref{eq::3.27}), (\ref{eq::3.28}), and \autoref{BDG},  there exists a positive constant $b_1 $ such that for any $\tau \in \mathscr{T}[T-\e, T]$,
\begin{equation}\label{eq:2.30}
	\begin{aligned}
		&\bigg\{\dbE_{\tau}^{\dbQ^{i}}\Big(\int_{\tau}^{T}(\dbE_{s}[|\Delta V_{s+\zeta_s}|])^2ds\Big)^2\bigg\}^{\frac{1}{2}}
		\leq b_1 \mathop{\sup}\limits_{r \in \mathscr{T}[\tau, T]} \bigg\{\Big\|\dbE_{\tau}^{\dbQ^{i}}\Big[\Big(\int_{\tau}^{r}\dbE_{s}[|\Delta V_{s+\zeta_s}|]d\tilde{W}^i_s\Big)^{4}\Big]\Big\|^{\frac{1}{4}}_{\i}\bigg\}^2\\ 
		&\leq b_1 \mathop{\sup}\limits_{r \in \mathscr{T}[T-\e, T]}\|\tilde{M}^i\|^2_{BMO^{[T-\e, r]}_{4}(\dbQ^{i})}
		\leq b_1L^2_{4}\|\tilde{M}^i\|^2_{BMO^{[T-\e, T]}(\dbQ^i)} \leq b_1L_{4}^2c_{2}^2\|M\|^2_{BMO^{[T-\e, T]}(\dbP)} \\
		&\leq Lb_1L_{4}^2c_{2}^2\|\Delta V\|^{2}_{\mathcal{Z}^2_{\dbF}(T-\e,T + K; \dbR^{n\times d})},
	\end{aligned}
\end{equation}
where $\tilde{M}^i = M - \langle M, \Lambda^i\cd W\rangle$.
Now,  using H\"{o}lder's inequality and similar to  (\ref{eq:2.30}),  there exists a positive constant $b_2 $ such that for any $\tau \in \mathscr{T}[T-\e, T]$,
\begin{align}
	&\dbE_{\tau}^{\dbQ^{i}}\Big\{\int_{\tau}^{T}1 + |V_{s}|^{2\alpha} + |\tilde{V}_{s}|^{2\alpha} + 2|\tilde{Z}_{s}|^{2\alpha}+(\dbE_s[|V_{s+\zeta_s}|])^{2\alpha}+ (\dbE_s[|\tilde{V}_{s+\zeta_s}|])^{2\alpha}ds\Big\}^2 \nonumber \\
	&\leq \e^{2(1-\alpha)}\dbE_{\tau}^{\dbQ^{i}} \bigg\{x_0^{\alpha} + \Big(\int_{\tau}^{T}|V_s|^2ds\Big)^{\alpha}+\Big(\int_{t}^{T}|\tilde{V}_s|^2ds\Big)^{\alpha}+2\Big(\int_{\tau}^{T}|\tilde{Z}_s|^2ds\Big)^{\alpha} \nonumber\\
	&\qq\q\q\q\q\q+\Big(\int_{t}^{T}|\dbE_s[|V_{s+\zeta_s}|]|^2ds\Big)^{\alpha} + \Big(\int_{\tau}^{T}|\dbE_s[|\tilde{V}_{s+\zeta_s}|]|^2ds\Big)^{\alpha}\bigg\}^2\nonumber\\
	&\leq 7\e^{2(1-\alpha)}\dbE_{\tau}^{\dbQ^{i}}
	\bigg\{x_0^{2} + 7 + \Big(\int_{\tau}^{T}|V_s|^2ds\Big)^{2}+\Big(\int_{t}^{T}|\tilde{V_s}|^2ds\Big)^{2}+2\Big(\int_{t}^{T}|\tilde{Z_s}|^2ds\Big)^{2} \nonumber\\
	&\qq\q\q\q\q\q+\Big(\int_{\tau}^{T}|\dbE_s[|V_{s+\zeta_s}|]|^2ds\Big)^{2} + \Big(\int_{\tau}^{T}|\dbE_s[|\tilde{V}_{s+\zeta_s}|]|^2ds\Big)^{2}\bigg\} \nonumber\\
	&\leq 7\e^{2(1-\alpha)} \Big\{x_0^{2} + 7 + 6(L^2+1)b_2^2L_{4}^{4}c_2^4K_2^4\Big\}. \label{eqq:3.23}
\end{align}
Then, combining (\ref{eq:2.28}),  (\ref{eq:2.30}),  and (\ref{eqq:3.23}),  we obtain that
\begin{equation}\label{eq::3.35}
	\begin{aligned}
		I_3 \leq 7Lb_1L_{4}^2c_{2}^2\e^{1-\alpha} \sqrt{7\Big[x_0^{2} + 7 + 6(L^2+1)b_2^2L_{4}^{4}c_2^4K_2^4\Big]}\|\Delta V\|^{2}_{\mathcal{Z}^2_{\dbF}(T-\e,T + K;\dbR^{n\times d})}.
	\end{aligned}
\end{equation}
For the term $I_2$, 
similarly to the deduction of $I_3$,  there exists a positive constant $b_3$ such that
\begin{align}\label{eq::3.36}
	I_2 \leq 7b_3L_{4}^2c_{2}^2\e^{1-\alpha} \sqrt{7\Big[x_0^{2} + 7 + 6(L^2+1)b_2^2L_{4}^{4}c_2^4K_2^4\Big]}\|\Delta V\|^{2}_{\mathcal{Z}^2_{\dbF}(T-\e,T + K;\dbR^{n\times d})}.
\end{align}
Finally, combining (\ref{eq::3.30}), (\ref{eq::3.35}), and (\ref{eq::3.36}), we obtain that for any $\tau\in \mathscr{T}[T-\e, T]$,
\begin{align}
	&|\Delta Y_{\tau}^{i}|^2 + \dbE_{\tau}^{\dbQ^{i}}\int_{\tau}^{T}|\Delta Z_{s}^{i}|^2ds \nonumber\\
	&\leq 168[\rho(K_1)]^2 c_2^2(L+1)(x_0 + 4K_2)\e\big\|\Delta U\big\|^2_{\mathcal{H}^\i_{\dbF}(T-\e,T+K;\dbR^{n})} + 42(n+1)(L+1)(b_1 +b_3)[\rho(K_1)]^2  \nonumber\\
	&\q \cdot L_{4}^2c_{2}^2\e^{1-\alpha}\sqrt{7\Big[x_0^{2} + 7 + 6(L^2+1)b_2^2L_{4}^{4}c_2^4K_2^4\Big]}\big\|\Delta V\big\|^{2}_{\mathcal{Z}^2_{\dbF}(T-\e,T + K;\dbR^{n\times d})}\nonumber\\ 
	&\leq C_1\e\|\Delta U\|^2_{\mathcal{H}^\i_{\dbF}(T-\e,T+K;\dbR^n)} + C_2 \e^{1-\alpha}\|\Delta V  \|^2_{\mathcal{Z}^2_{\dbF}(T-\e,T + K;\dbR^{n\times d})}, \label{eqq:3.25}
\end{align}
where 
$$C_1 = 168[\rho(K_1)]^2 c_2^2(L+1)(x_0 + 4K_2) $$
and
$$C_2 = 42(n+1)(L+1)(b_1 +b_3)[\rho(K_1)]^2 L_{4}^2c_{2}^2\sqrt{7\Big[x_0^{2} + 7 + 6(L^2+1)b_2^2L_{4}^{4}c_2^4K_2^4\Big]}.$$
In view of (\ref{eq:::3.27}) and (\ref{eqq:3.25}), we derive that
\begin{align}\label{eq:2.45}
	&\|\Delta Y\|^2_{\mathcal{H}^\i_{\dbF}(T-\e,T+K;\dbR^{n})} + c_1^2\|\Delta Z\|^{2}_{\mathcal{Z}^2_{\dbF}(T-\e,T+K ;\dbR^{n\times d})} \nonumber\\
	&=\|\Delta Y\|^2_{\mathcal{H}^\i_{\dbF}(T-\e,T ;\dbR^{n})} + c_1^2\|\Delta Z\|^{2}_{\mathcal{Z}^2_{\dbF}(T-\e,T;\dbR^{n\times d})} \nonumber \\
	&\leq nC_1\e\|\Delta U\|^2_{\mathcal{H}^\i_{\dbF}(T-\e,T+K;\dbR^{n})} + nC_2 \e^{1-\alpha}\|\Delta V  \|^2_{\mathcal{Z}^2_{\dbF}(T-\e,T + K;\dbR^{n\times d})}.
\end{align}
Therefore, by taking $\e$ sufficiently small, we deduce that the mapping $\Pi$ is a contraction on  $\mathcal{B}_{\e}(K_1, K_2)$. This completes the proof.
%
\end{proof}

\section{Global solution with bounded terminal value}\label{section4}

In this section, we study global  solutions of the quadratic BSDE (\ref{eq::1.1})  with bounded terminal value.
Recall that  $\lambda, \lambda_{0}, \sigma, \sigma_0, \gamma$, and $\alpha \in [0, 1)$ are some positive constants, and  $\{\theta_{t};t\in[0, T]\}$ is an $\dbF$-progressively measurable scalar-valued non-negative process. 
Additionally, note that $f(\omega, t, y, z, \phi_r, \psi_{\bar{r}}):  \Om\times [0, T]\times \dbR^n \times \dbR^{n\times d}\times L^2_{\sF_r}(\Om; \dbR^n) \times L^2_{\sF_{\bar{r}}}(\Om; \dbR^{n\times d}) \rightarrow L^2_{\sF_t}(\Om;\dbR^n)$ and $f(\cdot, \cdot, y, z,\phi_r, \psi_{\bar{r}})$ is $\dbF$-progressively measurable, where $r, \bar{r} \in [t, T+K]$. 
\begin{assumption}\label{assumption2}\rm 
		
For $i = 1, ..., n$,  and for all   $t \in [0, T]$, $r, \bar{r} \in [t, T+K]$,  $y\in \dbR^n, z\in \dbR^{n\times d}, \phi\in \mathcal{H}^2_{\dbF}(t,T + K;\dbR^{n})$, and $\psi\in \mathcal{H}^2_{\dbF}(t,T + K;\dbR^{n\times d})$,  the generator $f^i$ satisfies that $d\dbP \times dt$-a.e.,
$$|f^i(t, y, z, \phi_{r},\psi_{\bar{r}})| \leq 
\theta_{t} + \sigma|y| +   \sigma_0\dbE_t[|\phi_{r}|] +  \frac{\gamma}{2}|z^i|^2.
$$
\end{assumption}
\begin{assumption}\label{assumption5} \rm 
	For $i = 1, ..., n$, and for all $t \in [0, T]$, $r, \bar{r} \in [t, T+K]$, $y\in \dbR^n, z\in \dbR^{n\times d}, \phi\in \mathcal{H}^2_{\dbF}(t,T + K;\dbR^{n})$, and $\psi\in \mathcal{H}^2_{\dbF}(t,T + K;\dbR^{n\times d})$,  the generator $f^i$ satisfies that $d\dbP \times dt$-a.e.,
	\begin{itemize}
		\item [$\rm(i)$] $$|f^{i}(t, y, z, \phi_{r},\psi_{\bar{r}})| \leq 
		\theta_{t} + \sigma|y| +  \lambda |z|^{1+\alpha}+ \sigma_0\dbE_t[|\phi_{r}|] +\lambda_{0}\dbE_{t}[|\psi_{\bar{r}}|] +  \frac{\gamma}{2}|z^i|^2.
		$$
		\item [$\rm(ii)$] \begin{align*}
			f^{i}(t, y, z, \phi_{r},\psi_{\bar{r}}) \geq \frac{\gamma}{2}|z^i|^2 - \theta_{t} - \sigma|y| - \lambda |z|^{1+\alpha} - \sigma_0\dbE_t[|\phi_{r}|] -\lambda_{0}\dbE_{t}[|\psi_{\bar{r}}|]
		\end{align*}
		or
		\begin{align*}
			f^{i}(t, y, z, \phi_{r},\psi_{\bar{r}}) \leq -\frac{\gamma}{2}|z^i|^2 + \theta_{t} + \sigma|y| + \lambda |z|^{1+\alpha} + \sigma_0\dbE_t[|\phi_{r}|] +\lambda_{0}\dbE_{t}[|\psi_{\bar{r}}|].
		\end{align*}
		\item [$\rm(iii)$] 
			There exists constant $M_4$ such that
			\begin{align}\label{20241216}
				\mathop{\sup}\limits_{\tau\in[T, T+K]}\bigg\|\dbE_{\tau}\exp\bigg\{\frac{\gamma\e_0}{2n} \int_\tau^{T+K}|\eta_s|^2ds\bigg\}\bigg\|_{\i} \leq M_4 
				,
			\end{align}
			where 
			\begin{equation}\label{definitione_01}
				\e_0 \triangleq \min \Big\{ \Big(\frac{\gamma}{16n(18T\lambda_{0}^2L^2 + 1)} \Big)^{\frac{1+\alpha}{1-\alpha}}, \frac{\gamma}{24\big(2+(\sigma+\sigma_0)T\big)}, \frac{\gamma}{9} \Big\}.
			\end{equation}
	\end{itemize}
\end{assumption}
%

\begin{remark} \rm 
	In \autoref{assumption5},
on one hand, the reason why we introduce the item (iii) is because  items (i) and (ii) involve the anticipated term of $Z_t$. In fact,  the method  used to handle the term of $Z_t$   in  Fan--Hu--Tang \cite [Theorem 2.5]{fan2023multi} fails to address  the anticipated term of $Z_t$.
	On the other hand,  the item (iii) could be removed if $\lambda_{0} =0$, and  both items (ii) and (iii) could  be removed  if $\lambda = \lambda_{0} =0$ {(in this special case, \autoref{assumption5} becomes \autoref{assumption2})}. 
\end{remark}


\begin{theorem}\label{global_bdd} \sl 
	Let \autoref{assumption1} and \autoref{assumption2}  hold. Then  BSDE \rf{eq::1.1}  admits a unique global adapted solution $(Y, Z) \in \mathcal{H}^\i_{\dbF}(0, T+K;\dbR^{n}) \times \mathcal{Z}^2_{\dbF}(0,T+K;\dbR^{n\times d})$. Moreover, there exist  positive constants $J_1$ and $J_2$ depending only on $(n, T, \sigma, \sigma_0, M_1, M_3)$ and $(n, T, \sigma, \sigma_0, M_1, M_2, M_3)$ respectively such that 
	\begin{align*}
		\|Y\|_{\mathcal{H}^\i_{\dbF}(0,T+K; \dbR^n)}\leq J_1 \q~ {\rm and}\q~
		\|Z\|^{2}_{\mathcal{Z}^2_{\dbF}(0,T + K; \dbR^{n\times d})}\leq J_2.
	\end{align*}
\end{theorem}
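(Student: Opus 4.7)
The plan is to extract uniform a priori bounds on $\|Y\|_{L^\infty_\dbF(0,T+K)}$ and $\|Z\|_{\mathcal{Z}^2_\dbF(0,T+K)}$, and then splice the local solutions supplied by Theorem \ref{thm:2.2} on a finite backward partition of $[0,T]$.

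For the $L^\infty$ bound on $Y$, I would view each component $Y^{i}$ as a scalar BSDE after freezing $(Y,Z^{j}_{j\ne i},Y_{\cdot+\delta_{\cdot}},Z_{\cdot+\zeta_{\cdot}})$; its generator $g^{i}(t,z):=f^{i}(t,Y_t,V_t(z;i),Y_{t+\delta_t},Z_{t+\zeta_t})$ satisfies, by Assumption \ref{assumption2},
\[
|g^{i}(t,z)|\le\theta_t+\sigma|Y_t|+\sigma_0\dbE_t[|Y_{t+\delta_t}|]+\tfrac{\gamma}{2}|z|^{2},
\]
which is condition \eqref{eq:6.6} of Lemma \ref{lemma:6.2} with $\lambda=\lambda_0=0$. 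Plugging this into \eqref{eq:6.7} and bounding $|Y_s|$ by $\sqrt n\max_j|Y^{j}_s|$ yields $|Y_t^{i}|\le M_1+M_3+(\sigma+\sigma_0)\sqrt n(T-t)\max_j\|Y^{j}\|_{L^\infty_\dbF(t,T+K)}$. Fixing $\delta^{*}:=(2\sqrt n(\sigma+\sigma_0))^{-1}$ and iterating backward on the partition $t_k:=T-k\delta^{*}$ closes the bound by Gronwall, giving a recursion of the form $a_{k+1}\le 2(a_k+M_3)$ for $a_k:=\max_j\|Y^{j}\|_{L^\infty_\dbF(t_k,T+K)}$ with $a_0\le M_1$. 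After $\lceil T/\delta^{*}\rceil$ iterations this produces a finite constant $J_1=J_1(n,T,\sigma,\sigma_0,M_1,M_3)$.

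For the BMO bound on $Z$, once $\|Y\|_{L^\infty}\le J_1$ is known, the growth of $g^{i}$ reads $|g^{i}(t,z)|\le C_t+\tfrac{\gamma}{2}|z|^{2}$ with $C_t\le\theta_t+(\sigma+\sigma_0)J_1$. Since the sign of the quadratic term is ``wrong'' for Lemma \ref{lemma:6.2-2}, I would instead apply the It\^o--Tanaka formula to $\psi(Y^{i}):=\exp(\beta|Y^{i}|)$ with $\beta>\gamma$; using $|\sgn(Y^{i})g^{i}|\le|g^{i}|$ and discarding the non-negative local-time contribution gives
\[
d\psi(Y^{i})\ge e^{\beta|Y^{i}|}\Big[-\beta C_t+\tfrac{\beta(\beta-\gamma)}{2}|Z^{i}|^{2}\Big]dt+\beta\,\sgn(Y^{i})e^{\beta|Y^{i}|}Z^{i}\,dW.
\]
Conditional expectation at any $\tau\in\mathscr{T}[0,T]$ (the stochastic integral being a genuine martingale under $|Y^{i}|\le J_1$ and $Z^{i}\in\mathcal{H}^{2}$) and the choice $\beta=2\gamma$ then yield a uniform BMO estimate on $Z^{i}$ over $[0,T]$ depending only on $(\gamma,J_1,M_1,M_3,\sigma,\sigma_0,T)$. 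Summing over $i$ and adding the $\eta$-contribution on $[T,T+K]$ via the splitting \eqref{eq:2.15} delivers $\|Z\|^{2}_{\mathcal{Z}^2_\dbF(0,T+K)}\le J_2=J_2(n,T,\sigma,\sigma_0,M_1,M_2,M_3)$.

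With $(J_1,J_2)$ secured, substituting them in place of $(M_1,M_2)$ inside Theorem \ref{thm:2.2} fixes a step length $\varepsilon^{*}>0$ that is uniform along $[0,T]$. Solving backwards on $[T-\varepsilon^{*},T+K]$, then on $[T-2\varepsilon^{*},T-\varepsilon^{*}]$ with the newly produced ``terminal value'' at $T-\varepsilon^{*}$, and so on, the a priori bounds guarantee admissibility at each stage; concatenation yields a global solution respecting the stated norm estimates, and uniqueness follows from the local uniqueness of Theorem \ref{thm:2.2} on each subinterval. The crux is the a priori $L^\infty$ estimate on $Y$: the anticipated term $\dbE_t[|Y_{t+\delta_t}|]$ propagates the prescribed data on $[T,T+K]$ back into $[0,T]$, while the diagonally quadratic multidimensional coupling precludes a naive energy estimate, forcing the componentwise exponential argument via Lemma \ref{lemma:6.2} together with a backward Gronwall; this is precisely why the stronger componentwise Assumption \ref{assumption2} is invoked alongside Assumption \ref{assumption1}.
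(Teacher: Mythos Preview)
Your proposal is correct and follows essentially the same strategy as the paper: first derive uniform a priori $L^\infty$ and BMO bounds on any solution (which the paper packages as Lemma~\ref{lemma:4.4}), then substitute these bounds for $(M_1,M_2)$ in Theorem~\ref{thm:2.2} to fix a uniform step length and splice local solutions backward. The only cosmetic difference is in the BMO estimate for $Z$: you apply It\^{o}--Tanaka to $\exp(2\gamma|Y^{i}|)$, whereas the paper uses the test function $\Phi(x)=\gamma^{-2}\big[e^{\gamma|x|}-\gamma|x|-1\big]$ satisfying $\Phi''-\gamma|\Phi'|=1$, which absorbs the quadratic term exactly; both choices give the same conclusion.
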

\begin{theorem}\label{global_bdd2}  \sl 
	Let \autoref{assumption1} and \autoref{assumption5}  hold. Then BSDE  \rf{eq::1.1} admits a unique global adapted solution $(Y, Z) \in \mathcal{H}^\i_{\dbF}(0, T+K;\dbR^{n}) \times \mathcal{Z}^2_{\dbF}(0,T+K;\dbR^{n\times d})$. Moreover, there exist  positive constants $J_3$ and $J_4$, depending only on $(n, T, L, \alpha, \sigma,  \sigma_0, \lambda, \lambda_0, \gamma,  M_1, M_3, M_4)$ such that 
	\begin{align*}
		\|Y\|_{\mathcal{H}^\i_{\dbF}(0,T+K; \dbR^n)}\leq J_3 \q~ {\rm and}\q~
		\|Z\|^{2}_{\mathcal{Z}^2_{\dbF}(0,T + K; \dbR^{n\times d})}\leq J_4.
	\end{align*}
\end{theorem}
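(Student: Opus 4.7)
The strategy is to derive $(Y,Z)$-independent a priori bounds on $\|Y\|_{L^\infty_{\dbF}(0,T+K;\dbR^n)}$ and $\|Z\|_{\mathcal{Z}^2_{\dbF}(0,T+K;\dbR^{n\times d})}$ for \emph{any} putative solution on a subinterval $[\tau,T+K]$ that depend only on the quantities listed in the statement, and then to splice together local solutions from \autoref{thm:2.2}. Uniqueness on each small splice follows from the contraction argument already used in \autoref{thm:2.2}, so the bulk of the work is the a priori estimate. The new technical ingredient, relative to \autoref{global_bdd}, is that the generator is only skew sub--quadratic in the full matrix $Z$ (i.e.\ of growth $\lambda|z|^{1+\alpha}$ in the off-diagonal entries) and linear in the anticipated $Z_{t+\zeta_t}$, so we cannot afford the crude $L^\infty$ bound of \autoref{global_bdd}; John--Nirenberg will be used to absorb the resulting exponential.

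For the bound on $Y$, I would apply \autoref{lemma:6.2} componentwise using \autoref{assumption5}(i), with $U=Y$ and $V=Z$. For each $i$ this yields
\begin{align*}
\exp(\gamma|Y^i_t|) \leq \dbE_t \exp\bigg\{&\gamma\|\xi^i\|_\infty + \gamma M_3 + (\sigma+\sigma_0)\gamma\|Y\|_{L^\infty_{\dbF}(t,T+K;\dbR^n)}(T-t)\\
&+ \lambda\gamma\int_t^T|Z_s|^{1+\alpha}ds + \lambda_0\gamma\int_t^T\dbE_s[|Z_{s+\zeta_s}|]ds\bigg\}.
\end{align*}
Using $(1+\alpha)$-Young's inequality $\lambda|z|^{1+\alpha}\leq \e_0|z|^2/(2n) + C(\lambda,\alpha,\e_0,n)$ and, for the anticipated integral, $\lambda_0 \dbE_s[|Z_{s+\zeta_s}|]\leq \e_0\dbE_s[|Z_{s+\zeta_s}|^2]/(4n\cdot 18T\lambda_0^2 L^2\cdot\text{(const)})+\text{const}$ followed by \rf{eq::3.2}, the exponential splits (H\"older with appropriately large but fixed exponents dictated by $\e_0$ in \rf{definitione_01}) into three factors: a purely deterministic constant depending on $(T,\lambda,\lambda_0,\alpha,\sigma,\sigma_0,\gamma,M_1,M_3)$; an exponential in $(\gamma\e_0/(2n))\int_t^{T+K}|Z_s|^2 ds$ restricted to $[t,T]$; and an exponential in $(\gamma\e_0/(2n))\int_T^{T+K}|\eta_s|^2 ds$. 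The last factor is bounded by $M_4$ by \autoref{assumption5}(iii); the middle one is handled by \autoref{John-Nirenberg inequality} provided $\|Z\cdot W\|_{BMO(\dbP)}^2\cdot(\gamma\e_0/(2n))<1$. This is where the precise choice of $\e_0$ in \rf{definitione_01} pays off.

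To close the loop I would, in parallel, use \autoref{lemma:6.2-2} with \autoref{assumption5}(ii) to estimate $\|Z^i\cdot W\|_{BMO}$ in terms of $\|Y\|_{L^\infty_{\dbF}}$, $M_3$, $M_4$, and again the exponential of the anticipated $\eta$-integral. Feeding the $Y$-bound back into the $Z$-bound and vice versa gives a closed system; the first factor $16n(18T\lambda_0^2 L^2+1)$ in \rf{definitione_01} is precisely what is needed to make the BMO norm of $Z\cdot W$ stay inside the John--Nirenberg regime when one feeds $\exp\{\gamma\e_0\int |Z|^2/(2n)\}$ back in. This produces a $Y$-bound $J_3$ and a $Z$-bound $J_4$ depending only on the quantities listed in the statement.

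Once the a priori bounds are established, I would argue existence by iteration of \autoref{thm:2.2}: start on $[T-\e,T+K]$; the resulting solution satisfies $\|Y\|_{L^\infty}\leq J_3$ and $\|Z\|_{\mathcal{Z}^2}^2\leq J_4$ on that interval. Treating $(Y_{T-\e},Z_{\cdot})$ on $[T-\e,T+K]$ as the new ``terminal'' data, the quantities $M_1,M_2,M_3,M_4$ in the hypotheses of \autoref{thm:2.2} are replaced by $J_3,J_4,M_3,M_4$ (independent of the current splice point), so the $\e$ provided by \autoref{thm:2.2} can be chosen uniform in the splice. Finitely many splices then cover $[0,T+K]$ and yield the global solution. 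Uniqueness follows the same contraction argument restricted to the \emph{a priori} BMO ball $\{\|Z\cdot W\|_{BMO}^2\leq J_4\}$. The hardest step is the closing of the loop in paragraph two: the combination of the skew sub-quadratic term in the off-diagonal $Z$-entries, the linear anticipated $Z_{t+\zeta_t}$ term, and the John--Nirenberg threshold must all be calibrated simultaneously, and this is why \autoref{assumption5}(iii) must be stated with the specific $\e_0$ of \rf{definitione_01}.
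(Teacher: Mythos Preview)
Your high-level architecture is right and matches the paper: derive a priori bounds via \autoref{lemma:6.2} and \autoref{lemma:6.2-2}, then splice local solutions from \autoref{thm:2.2}. The splicing paragraph is fine. The genuine gap is in how you close the a priori loop.

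You propose to control $\dbE_t\exp\{\frac{\gamma\e_0}{2n}\int_t^{T+K}|Z_s|^2ds\}$ by John--Nirenberg ``provided $\|Z\cdot W\|_{BMO}^2\cdot(\gamma\e_0/(2n))<1$''. But that proviso is exactly a bound on $\|Z\|_{\mathcal{Z}^2_\dbF}$, which is what you are trying to establish; nothing in the data guarantees it, and the John--Nirenberg bound $(1-\frac{\gamma\e_0}{2n}\|Z\|_{BMO}^2)^{-1}$ blows up near the threshold, so it cannot yield a uniform a priori constant. The same circularity infects your treatment of the anticipated integral once you push $\lambda_0\dbE_s[|Z_{s+\zeta_s}|]$ into a square via Young: you end up with $\exp\{c\int|Z_{s+\zeta_s}|^2\}$ and again need a small-BMO hypothesis you do not have.

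The paper avoids this by a normalization trick. In the $Z$-estimate it writes
\[
6\e_0\lambda_0\,\dbE_s[|Z_{s+\zeta_s}|]\;\le\;\frac{(\dbE_s[|Z_{s+\zeta_s}|])^2}{2\|\dbE_\cdot[|Z_{\cdot+\zeta_\cdot}|]\|_{\mathcal{Z}^2_\dbF}^2}\;+\;18\e_0^2\lambda_0^2L^2\|Z\|_{\mathcal{Z}^2_\dbF(t,T+K)}^2,
\]
so that John--Nirenberg is applied to the \emph{unit-BMO} martingale $\int \dbE_s[|Z_{s+\zeta_s}|]/\|\cdot\|_{\mathcal{Z}^2_\dbF}\,dW_s$ and gives a universal constant~$3$, independent of $\|Z\|_{\mathcal{Z}^2_\dbF}$. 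The residual factor $\exp\{18\e_0^2\lambda_0^2L^2T\,\|Z\|_{\mathcal{Z}^2_\dbF}^2\}$ is then bounded, via Jensen, by $\sup_\tau\|\dbE_\tau\exp\{\frac{\gamma\e_0}{8n}\int_\tau^{T+K}|Z_s|^2ds\}\|_\infty$ (here the first minimum in \rf{definitione_01} is used). Combining this with \autoref{lemma:6.2-2} and the analogous Young estimate for $|Z|^{1+\alpha}$ produces the self-improving inequality
\[
\sup_\tau\Big\|\dbE_\tau\exp\Big\{\tfrac{\gamma\e_0}{4n}\int_\tau^{T+K}|Z_s|^2ds\Big\}\Big\|_\infty\;\le\;C(\|Y\|_{L^\infty})\cdot\Big(\sup_\tau\Big\|\dbE_\tau\exp\{\cdots\}\Big\|_\infty\Big)^{1/2},
\]
which closes to give the $Z$-bound purely in terms of $\|Y\|_{L^\infty}$, $M_3$, $M_4$. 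Only \emph{after} this does one feed into \autoref{lemma:6.2} for $Y$; at that stage the anticipated coefficient is $2n\lambda_0\gamma$, and the same normalization trick works only on a short interval $\kappa\le \e_0/(8n^3\lambda_0^2\gamma L^2)$ --- a restriction you do not mention but which is essential, and which the iteration in your last paragraph must respect. The second minimum in \rf{definitione_01} is then what makes the resulting $\|Y\|_{L^\infty}$-inequality contractive.
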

In order to prove \autoref{global_bdd} and \autoref{global_bdd2}, we need the following lemma, which provides a priori estimate for  BSDE \eqref{eq::1.1}. Note that $\kappa\in(0,T]$ is a proper constant  considered below.

\begin{lemma}[A priori estimate]\label{lemma:4.4} \sl 
	
Let	 the condition  \rf{121201} hold.  On one hand, if the generator $f$ satisfies \autoref{assumption2}, then the solution  $(Y,Z)$ of BSDE  \eqref{eq::1.1} admits the following estimate
\begin{align} \label{eq::4.11}
\|Y\|_{\mathcal{H}^\i_{\dbF}(T-\kappa, T+K; \dbR^n)} \leq  Q_{1}\q \hbox{and}\q
\|Z\|^{2}_{\mathcal{Z}^2_{\dbF}(T-\kappa,T + K; \dbR^{n \times d})} \leq Q_{2},
\end{align}
where $Q_1$ is a positive constant depending only on $(n, T, \sigma, \sigma_0, M_1, M_3)$ and $Q_{2}$ is also a positive constant depending only on $(n, T, \sigma, \sigma_0, M_1, M_2, M_3)$.
On the other hand, for positive constant $\kappa \leq \frac{\e_0}{8n^3\lambda_{0}^2\gamma L^2}$, if the generator $f$ satisfies \autoref{assumption5}, then  $(Y,Z)$  of BSDE  \eqref{eq::1.1} admits the following estimate
\begin{align} \label{eq:::4.13}
	\|Y\|_{\mathcal{H}^\i_{\dbF}(T-\kappa, T+K; \dbR^n)} \leq  Q_{3}\q\hbox{and}\q 
	\|Z\|^{2}_{\mathcal{Z}^2_{\dbF}(T-\kappa,T + K; \dbR^{n\times d})} \leq Q_{4}, 
\end{align}
where 
%
$Q_{3}$ and $Q_{4}$ are positive constants depending only on $(n, T, L, \alpha, \sigma,  \sigma_0, \lambda, \lambda_0, \gamma,  M_1, M_3, M_4)$.
\end{lemma}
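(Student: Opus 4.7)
My plan for the first estimate is to view each component $Y^i$ as the solution of a one-dimensional quadratic BSDE driven by the frozen process $(Y,Z)$. Under \autoref{assumption2} its generator satisfies $|f^i|\le \theta_t+\sigma|Y_t|+\sigma_0\dbE_t[|Y_{t+\delta_t}|]+\tfrac{\gamma}{2}|Z^i_t|^2$, so \autoref{lemma:6.2} (with $U=Y$, $V=Z$, and $\lambda=\lambda_0=0$) gives the deterministic inequality $|Y^i_t|\le M_1+M_3+(\sigma+\sigma_0)\|Y\|_{L^\infty_{\dbF}(t,T+K)}(T-t)$. Combining with $|Y_t|\le\sqrt n\max_i|Y^i_t|$ produces a backward self-referential inequality on the decreasing function $\phi(t):=\|Y\|_{L^\infty_{\dbF}(t,T+K)}$; a short backward iteration on subintervals of length below $1/(2\sqrt n(\sigma+\sigma_0))$ closes it into the global bound $\|Y\|_{L^\infty_{\dbF}(T-\kappa,T+K)}\le Q_1$ depending only on $(n,T,\sigma,\sigma_0,M_1,M_3)$. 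For the $Z$-bound in this case I would apply It\^o's formula to $\phi(Y^i):=e^{2\gamma Y^i}+e^{-2\gamma Y^i}$: the identities $\phi''=4\gamma^2\phi$ and $|\phi'|\le 2\gamma\phi$ absorb the quadratic part of $f^i$ and leave a strictly positive coefficient in front of $|Z^i|^2$. Conditioning on any $\tau\in\mathscr{T}[T-\kappa,T]$ (the stochastic integral is a genuine martingale because $Y^i$ is bounded) controls $\dbE_\tau\int_\tau^T|Z^i|^2 ds$; summing over $i$ and appending the $\|\eta\|^2_{\mathcal Z^2}$-contribution as in \eqref{eq:2.15} delivers $Q_2$.

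For the second estimate the same strategy is followed, but under \autoref{assumption5} the right-hand sides of \eqref{eq:6.7} and \eqref{eq:6.10} now carry the extra exponential pieces $\lambda\gamma\int_t^T|Z_s|^{1+\alpha}ds$ and $\lambda_0\gamma\int_t^T\dbE_s[|Z_{s+\zeta_s}|]ds$. I would split each exponential by H\"older's inequality into a deterministic factor, a $|Z|^{1+\alpha}$ factor, and an anticipated factor. The first is bounded by $M_1,M_3$-type constants; the second is tamed by Young's inequality $x^{1+\alpha}\le\epsilon x^2+C(\alpha)\epsilon^{-(1+\alpha)/(1-\alpha)}$, which turns its exponent into a small multiple of $\int|Z|^2$ plus a constant. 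Invoking \autoref{lemma:6.2-2} with $\varepsilon=\varepsilon_0/n$ componentwise and then the generalized H\"older inequality across $i=1,\dots,n$ yields a self-referential exponential estimate of the form $\dbE_t\exp\{\tfrac{\gamma\varepsilon_0}{2n}\int_t^T|Z|^2 ds\}\le C\cdot\dbE_t\exp\{\rho\int_t^{T+K}|Z|^2 ds\}$ with a coupling coefficient $\rho$ proportional to $\kappa$. The hypothesis $\kappa\le\varepsilon_0/(8n^3\lambda_0^2\gamma L^2)$ is designed precisely so that $\rho<\gamma\varepsilon_0/(4n)$; after splitting the integral at $T$ and absorbing the $[T,T+K]$ portion by the BMO hypothesis \eqref{20241216} on $\eta$ (whose $M_4$-bound is tuned to $\varepsilon_0/n$), the John--Nirenberg inequality (\autoref{John-Nirenberg inequality}) converts the remaining statement into the $\mathcal Z^2$-bound $Q_4$, and feeding $Q_4$ back into \eqref{eq:6.7} closes the $Y$-estimate $Q_3$.

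The main obstacle throughout is the anticipated quadratic contribution $\lambda_0\gamma\int_t^T\dbE_s[|Z_{s+\zeta_s}|]ds$ sitting inside the exponential. For a non-anticipated quadratic BSDE one could invoke John--Nirenberg on $Z\cdot W$ once and for all, but here the integrand $\dbE_s[|Z_{s+\zeta_s}|]$ must first be related to $|Z_r|^2$ on $[t,T+K]$ via \eqref{eq::3.2}, and this has to be done without leaving the exponential. I plan to achieve this through a chain of Cauchy--Schwarz, conditional Jensen $\exp(\dbE_s[X])\le\dbE_s[\exp X]$, and Jensen against the normalized Lebesgue measure $ds/(T-t)$, each step costing a multiplicative factor that has to be absorbed by the smallness of $\kappa$. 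It is precisely the accumulation of these factors that forces the quantitative hypothesis $\kappa\le\varepsilon_0/(8n^3\lambda_0^2\gamma L^2)$ in the statement, and this is where the real analytic work of the lemma concentrates.
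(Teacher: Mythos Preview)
Your treatment of the first estimate (under \autoref{assumption2}) is essentially the paper's proof: \autoref{lemma:6.2} componentwise with $\lambda=\lambda_0=0$, backward iteration on subintervals of length $\sim 1/(n(\sigma+\sigma_0))$ to close the self-referential bound on $\|Y\|$, and then It\^o's formula on an exponential test function for the $Z$-bound (the paper uses $\Phi(x)=\gamma^{-2}(e^{\gamma|x|}-\gamma|x|-1)$, which satisfies $\Phi''-\gamma|\Phi'|=1$; your $e^{2\gamma x}+e^{-2\gamma x}$ works just as well).

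For the second estimate (under \autoref{assumption5}) the ingredients are right but the closure does not run in the order you describe. The self-referential exponential inequality for $Z$ that comes out of \autoref{lemma:6.2-2} is \emph{not} of the form $\dbE_t\exp\{\tfrac{\gamma\varepsilon_0}{2n}\int|Z|^2\}\le C\cdot\dbE_t\exp\{\rho\int|Z|^2\}$ with a numerical $C$: the right-hand side of \eqref{eq:6.10} carries $\exp\{6\varepsilon\|Y\|_{L^\infty}\}$, and after the H\"older/John--Nirenberg steps this survives as a factor $\exp\{c\|Y\|_{L^\infty}\}$ (the paper's \eqref{eq::4.32}). Hence you cannot extract $Q_4$ before bounding $\|Y\|$. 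The paper instead feeds this $\|Y\|$-dependent $Z$-exponential bound back into the componentwise application of \autoref{lemma:6.2}; the \emph{definition} of $\varepsilon_0$ in \eqref{definitione_01}---not the smallness of $\kappa$---makes the resulting coefficient of $\|Y\|$ at most $\tfrac12$, producing a self-referential inequality for $\|Y\|$ alone (their \eqref{eq::4.38}) that iterates to $Q_3$, after which $Q_4$ is read off. Relatedly, the coupling $\rho=2\varepsilon_0^2(18\lambda_0^2L^2T+1)$ in the $Z$-bootstrap is small by the choice of $\varepsilon_0$, not by $\kappa$. The hypothesis $\kappa\le\varepsilon_0/(8n^3\lambda_0^2\gamma L^2)$ enters elsewhere: in the $Y$-estimate the anticipated term appears with prefactor $2n\lambda_0\gamma$ (no longer the small $6\varepsilon_0\lambda_0$), and the AM-GM splitting in \eqref{eq::4.34} then produces a coefficient $4n^3\lambda_0^2\gamma\kappa L^2/\varepsilon_0$ in front of the normalized square---it is exactly the bound on $\kappa$ that brings this below the John--Nirenberg threshold $\tfrac12$.
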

\begin{proof}
For simplicity of presentation,	the proof will be divided into two steps. 
%

\ms

\noindent {{\it Step 1:  The proof of the estimate \eqref{eq::4.11}.} }
On one hand,
by \autoref{assumption2},  we have that  for $i = 1, ..., n,$
\begin{align*}
 |f^{i}(t, Y_t, Z_t, Y_{t+\delta_t}, Z_{t+\zeta_t})|
\leq\theta_{t} + \sigma|Y_t| + \sigma_0\dbE_t[|Y_{t + \delta_t}|] 
 +  \frac{\gamma}{2}|Z_t^i|^2,\q t \in [T-\kappa, T].
\end{align*}
On the other hand,
it is obvious that $f^i$,  the  $i$-th  component of the generator $f$,  satisfies (\ref{eq:6.6}). 
Then, it  follows from \autoref{lemma:6.2},   noting the condition   \rf{121201},  that for each $i = 1, ..., n,$
\begin{align}\label{eq::4.12}
		\exp(\gamma|Y^i_t|) \leq &\exp\Big\{\gamma (M_1 + M_3) + (\sigma + \sigma_0)\gamma\|Y\|_{\mathcal{H}^\i_{\dbF}(t,T+K;\dbR^{n})}(T-t) \Big\}, \q \forall t\in [0, T],
\end{align}
which implies that  for $i = 1, ...,n,$
\begin{align*}
|Y^i_t| \leq  M_1 + M_3 + (\sigma + \sigma_0)\|Y\|_{\mathcal{H}^\i_{\dbF}(t,T+K;\dbR^n)}(T-t) , \q  t\in [T - \kappa, T].
\end{align*}
Hence, by combining  each component $Y^i$ of $Y$, we have that on the interval $[T - \kappa, T]$,
\begin{align*}
\|Y\|_{\mathcal{H}^\i_{\dbF}(t,T;\dbR^n)} \leq  n(M_1 + M_3) + n(\sigma + \sigma_0)\|Y\|_{\mathcal{H}^\i_{\dbF}(t,T+K;\dbR^n)}(T-t),
\end{align*}
and thus on the interval $[T - \kappa, T+K]$, 
\begin{align}\label{eq::4.13}
	\|Y\|_{\mathcal{H}^\i_{\dbF}(t,T+K;\dbR^n)} \leq  2n(M_1 + M_3) + n(\sigma + \sigma_0)\|Y\|_{\mathcal{H}^\i_{\dbF}(t,T+K;\dbR^n)}(T-t).
\end{align}

Now, we prove the first estimate of (\ref{eq::4.11}). 
Note that if $\sigma = \sigma_0 = 0$, then we have from \rf{eq::4.13}  that 
\begin{align*}
\|Y\|_{\mathcal{H}^\i_{\dbF}(T-\kappa,T+K;\dbR^n)} \leq  2n(M_1 + M_3).
\end{align*} 
Otherwise,  in order to show the first estimate of (\ref{eq::4.11}) holds, we denote $\epsilon = \frac{1}{2n(\sigma + \sigma_0)}$ and let  $m_0$ be the unique positive integer such that 
\begin{align}\label{241216}
2n(\sigma + \sigma_0)\kappa \leq m_0 < 2n(\sigma + \sigma_0)\kappa + 1.
\end{align}
Note that, if $m_0 = 1$,  we have that  $n(\sigma+\sigma_0)(T - t)\leq n(\sigma+\sigma_0)\kappa \leq \frac{1}{2}$  when $t\in [T-\kappa, T]$. This, combined with  (\ref{eq::4.13}),  implies that 
\begin{align}\label{eq:::4.15}
\|Y\|_{\mathcal{H}^\i_{\dbF}(T-\kappa,T+K;\dbR^n)} \leq  4n(M_1 + M_3).
\end{align}
If $m_0 \neq 1$,  we have that $n(\sigma+\sigma_0)(T - t)\leq n(\sigma+\sigma_0)\epsilon = \frac{1}{2}$ when
 $t\in [T-\epsilon, T]$. This  implies that
\begin{align}\label{eq::4.15}
	\|Y\|_{\mathcal{H}^\i_{\dbF}(T-\epsilon,T+K;\dbR^n)} \leq  4n(M_1 + M_3).
\end{align}
It also implies that  on the interval $[T-\epsilon,T-\epsilon+K$], 
\begin{align}\label{eq::4.17}
	\|Y\|_{\mathcal{H}^\i_{\dbF}(T-\epsilon,T-\epsilon+K;\dbR^n)} \leq  4n(M_1 + M_3).
\end{align}
Next, we would like to suture the interval $[T-\epsilon,T-\epsilon+K$] and assert that \rf{eq::4.17} still holds on the interval $[T-\k,T+K$]. 
In fact,  similarly to the process used to obtain  (\ref{eq::4.17}), we have that on the interval $[T-2\epsilon,T-\epsilon+K]$,
\begin{align*}
\|Y\|_{\mathcal{H}^\i_{\dbF}(T-2\epsilon,T-\epsilon+K;\dbR^n)} \leq  4n\Big(4n(M_1 + M_3)+M_3\Big) = (4n)^2M_1 + \Big(4n + (4n)^2\Big)M_3.
\end{align*}
Proceeding with the above computation gives us that for  $1\leq j \leq m_0 -1$,
\begin{align*}
\|Y\|_{\mathcal{H}^\i_{\dbF}(T-j\epsilon,T-(j-1)\epsilon+K;\dbR^n)} \leq  (4n)^{j}M_1 + \Big(4n + (4n)^2 + ... + (4n)^j\Big)M_3.
\end{align*}
Note that
\begin{align*}
	2n(\sigma + \sigma_0)\{\kappa-(m_0 - 1)\epsilon\} \leq 1 \leq 2n(\sigma + \sigma_0)\{\kappa-(m_0 - 1)\epsilon\} + 1,
\end{align*}
then, similarly to the process used to obtain (\ref{eq:::4.15}), we have 
\begin{align*}
	\|Y\|_{\mathcal{H}^\i_{\dbF}(T-\kappa, T-(m_0 - 1)\epsilon+K;\dbR^n)} \leq  (4n)^{m_0}M_1 + \big\{4n + (4n)^2 + ... + (4n)^{m_{0}}\big\}M_3.
\end{align*}
Thus, note \rf{241216}, we have 
\begin{align*}
	\|Y\|_{\mathcal{H}^\i_{\dbF}(T-\kappa, T+K;\dbR^n)} &\leq  (4n)^{m_0}M_1 
	+ \big\{4n + (4n)^2 + ... + (4n)^{m_{0}}\big\} M_3\\
&\leq 		(4n)^{[2n(\sigma + \sigma_0)T] +1}M_1 + \big\{4n + (4n)^2 + ... + (4n)^{[2n(\sigma + \sigma_0)T]+1}\big\} M_3  \deq Q_1,
\end{align*}
which implies that the first estimate of (\ref{eq::4.11}) holds.

Next, in order to prove the second estimate of  (\ref{eq::4.11}), we define the following function
$$\Phi(x) \triangleq \frac{1}{\gamma^2} [\exp(\gamma|x|) - \gamma|x| - 1], \q x\in \dbR.$$
It is easy to check that for $x\in \dbR $, 
\begin{align}\label{eq::4.255}
\Phi'(x) = \frac{1}{\gamma} [\exp(\gamma|x|) - 1]\sgn(x),\q \Phi''(x) = \exp(\gamma|x|), \q  \Phi''(x)-\gamma|\Phi'(x)| = 1.
\end{align}
For any $\tau\in \mathscr{T}[0, T]$, by applying It\^{o} formula to $\Phi(Y^i)$ in $[\tau, T]$,   noting  \autoref{assumption2} and (\ref{eq::4.255}), we deduce that  
\begin{align*}
\Phi(Y_{\tau}^i)  + \frac{1}{2}\dbE_\tau\int_{\tau}^{T}|Z_s^i|^2ds
&\leq  \dbE_\tau\big[\Phi(\xi_{T}^i)\big] + \dbE_\tau \int_{\tau}^{T}|\Phi'(Y_s^i)|\big(\theta_s + \sigma|Y_s| + \sigma_0\dbE_s[|Y_{s+\delta_s}|]\big)ds \\
&\leq \Phi(M_1) + |\Phi'(Q_1)| \big\{ M_3T + (\sigma+\sigma_0)Q_{1}T\big\}.
\end{align*}
Therefore, we get that for any $\tau\in \mathscr{T}[0, T+K]$,
\begin{align*}
	\dbE_\tau\int_{\tau}^{T+K}|Z_s|^2ds \leq 2n\Phi(M_1) +M_2 + 2n|\Phi'(Q_1)| \big\{M_3T + (\sigma+\sigma_0)Q_{1}T\big\} \triangleq Q_2.
\end{align*}
Consequently, we deduce (\ref{eq::4.11}) easily.

\ms

\noindent {{\it Step 2: The proof of the estimate \eqref{eq:::4.13}.} }
By the item (ii) of \autoref{assumption5}, we have
\begin{align*}
	f^{i}(t, Y_t, Z_t, Y_{t+\delta_t},Z_{t+\zeta_t}) &\geq \frac{\gamma}{2}|Z_t^i|^2 - \theta_{t} - \sigma|Y_t| - \lambda |Z_t|^{1+\alpha} - \sigma_0\dbE_t[|Y_{t+\delta_t}|] -\lambda_{0}\dbE_{t}[|Z_{t+\zeta_t}|]
\end{align*}
or 
\begin{align*}\label{definitione_00}
	f^{i}(t, Y_t, Z_t, Y_{t+\delta_t},Z_{t+\zeta_t}) &\leq -\frac{\gamma}{2}|Z_t^i|^2 + \theta_{t} + \sigma|Y_t| + \lambda |Z_t|^{1+\alpha} + \sigma_0\dbE_t[|Y_{t+\delta_t}|] +\lambda_{0}\dbE_{t}[|Z_{t+\zeta_t}|].
\end{align*}
It then follows from the definition of $\e_0$, \autoref{lemma:6.2-2}, and H\"{o}lder's inequality that when $\tau\in \mathscr{T}[T-\kappa, T]$,
\begin{equation}\label{4.12}
\begin{aligned}
	&\ \dbE_\tau\exp \bigg\{ \frac{\gamma}{2}\e_0 \int_{\tau}^{T}|Z^i_s|^2ds\bigg\} \\
	&\leq \dbE_\tau\exp\bigg\{6\e_0 \|Y\|_{\mathcal{H}^\i_{\dbF}(\tau, T+K;\dbR^n)} +  3\e_0 M_3 + 3\e_0(\sigma+\sigma_0)\|Y\|_{\mathcal{H}^\i_{\dbF}(\tau, T+K;\dbR^n)}T \\
	&\q+ 3\e_0\lambda \int_{\tau}^{T}|Z_s|^{1+\alpha}ds  + 3\e_0\lambda_{0}\int_{\tau}^{T}\dbE_{s}[|Z_{s+\zeta_s}|]ds\bigg\}\\
	&\leq  \exp\Big\{3\e_0 M_3 + 3\e_0\big(2+(\sigma+\sigma_0)T\big)\|Y\|_{\mathcal{H}^\i_{\dbF}(\tau, T+K;\dbR^n)}\Big\}\\
	&\q\cdot\dbE_\tau\exp\bigg\{6\e_0\lambda \int_{\tau}^{T}|Z_s|^{1+\alpha}ds\bigg\}   \cd \dbE_\tau \exp\bigg\{6\e_0\lambda_{0}\int_{\tau}^{T}\dbE_{s}[|Z_{s+\zeta_s}|]ds\bigg\}.
\end{aligned}
\end{equation}
Thus, by H\"{o}lder's inequality we get that 
\begin{equation}\label{eq::4.22}
	\begin{aligned}
		&\ \dbE_\tau\exp \bigg\{ \frac{\gamma\e_0}{2n} \int_{\tau}^{T}|Z_s|^2ds\bigg\}\\
		&\leq  \exp\Big\{3\e_0 M_3 + 3\e_0\big(2+(\sigma+\sigma_0)T\big)\|Y\|_{\mathcal{H}^\i_{\dbF}(\tau, T+K;\dbR^n)}\Big\}\\
		&\q\cdot\dbE_\tau\exp\bigg\{6\e_0\lambda \int_{\tau}^{T}|Z_s|^{1+\alpha}ds\bigg\}\cd  \dbE_\tau \exp\bigg\{6\e_0\lambda_{0}\int_{\tau}^{T}\dbE_{s}[|Z_{s+\zeta_s}|]ds\bigg\}.
	\end{aligned}
\end{equation}
For the last term in the above inequality, noting that for any stopping time $\tau\in \mathscr{T}[T-\kappa, T]$, if
$\|\dbE_{.}[|Z_{.+\zeta_{.}}|]\|_{\mathcal{Z}^2_{\dbF}(\tau,T;\dbR)} = 0,$
then we deduce that $\dbP$-a.s.,
$
\int_{\tau}^{T}\dbE_{s}[|Z_{s+\zeta_s}|]ds = 0.
$
%
Hence, we have
\begin{align*}
 \dbE_\tau \exp\bigg\{6\e_0\lambda_{0}\int_{\tau}^{T}\dbE_{s}[|Z_{s+\zeta_s}|]ds\bigg\} = 1\q \hb{and} \q \dbE_{\tau}\exp\bigg\{2n\lambda_{0}\gamma \int_{\tau}^{T}\dbE_{s}[|Z_{s+\zeta_s}|]ds\bigg\} = 1.
\end{align*}
Thus, when $ \|\dbE_{.}[|Z_{.+\zeta_{.}}|]\|_{\mathcal{Z}^2_{\dbF}(\tau,T;\dbR)} = 0$, and in view of \eqref{eq::4.22} and \eqref{eq::4.33}, the estimate for $Z$ and $Y$ can be deduced by applying arguments similarly to those in Fan--Hu--Tang \cite[Lemma 4.1]{fan2023multi}.
Therefore, without loss of generality, in the rest of the proof, we assume that
$\|\dbE_{.}[|Z_{.+\zeta_{.}}|]\|_{\mathcal{Z}^2_{\dbF}(\tau,T;\dbR)} \neq 0$.
Then similarly to the deduction of (\ref{eq::3.6}), and noting the inequality $|ab| \leq \frac{1}{2} (a^2 + b^2)$, we  have that  
\begin{equation}\label{eq:::4.25}
	\begin{aligned}
		 6\e_0\lambda_{0} \dbE_{s}[|Z_{s+\zeta_s}|]
		&=\frac{\dbE_{s}[|Z_{s+\zeta_s}|]}{\|\dbE_{.}[|Z_{.+\zeta_{.}}|]\|_{\mathcal{Z}^2_{\dbF}(\tau,T;\dbR)}} \cd  6\e_0\lambda_{0} \|\dbE_{.}[|Z_{.+\zeta_{.}}|]\|_{\mathcal{Z}^2_{\dbF}(\tau,T;\dbR)} \\
		&\leq \frac{\big\{\dbE_{s}[|Z_{s+\zeta_s}|]\big\}^2}{2\|\dbE_{.}[|Z_{.+\zeta_{.}}|]\|^2_{\mathcal{Z}^2_{\dbF}(\tau,T;\dbR)}}
		 +  18\e_0^2\lambda_{0}^2 L^2 \|Z\|^{2}_{\mathcal{Z}^2_{\dbF}(\tau,T+K;;\dbR^{n\times d})}.
\end{aligned}\end{equation}
Next, in order to better tackle the above terms, we define the following for some given time  $\bar t$ such that for
$T-\k\les \tau\les \bar t  \leq T$,
 \begin{equation}\label{241219}
 P_{\bar{t}} \deq  \int_{\bar{t}}^{T}\frac{\dbE_{s}[|Z_{s+\zeta_s}|]}{{\|\dbE_{.}[|Z_{.+\zeta_{.}}|]\|_{\mathcal{Z}^2_{\dbF}(\tau,T;\dbR)}}}dW_s.
 \end{equation}
Now, since   $Z\in\mathcal{Z}^2_{\dbF}(T-\kappa,T+K;\dbR^{n\times d})$,   \autoref{John-Nirenberg inequality} implies that for $\bar{\tau}\in \mathscr{T}[\tau, T]$,
\begin{equation}\label{J}
	\begin{aligned}
		&\ \dbE_{\bar{\tau}} \exp\bigg\{\frac{1}{2\|\dbE_{.}[|Z_{.+\zeta_{.}}|]\|^2_{\mathcal{Z}^2_{\dbF}(\tau ,T;\dbR)}}\int_{\bar{\tau}}^{T}|\dbE_{s}[|Z_{s+\zeta_s}|]|^2ds\bigg\} \\
		&\leq \dbE_{\tau}\exp\Big\{\frac{9}{16}\langle P \rangle_{T} - \frac{9}{16}\langle P \rangle_{\tau }\Big\} \leq \frac{1}{1-\frac{9}{16}\|P\|^2_{BMO^{[\tau, T]}(\dbP)}} = \frac{16}{7} \leq 3.
\end{aligned}\end{equation}
Thus we obtain that for any $\tau\in \mathscr{T}[T-\kappa, T]$,
\begin{equation}\label{eq:::4.26}
	\begin{aligned}
		&\ \dbE_{\tau} \exp\bigg\{\frac{1}{2\|\dbE_{.}[|Z_{.+\zeta_{.}}|]\|^2_{\mathcal{Z}^2_{\dbF}(\tau,T;\dbR)}}\int_{\tau }^{T}|\dbE_{s}[|Z_{s+\zeta_s}|]|^2ds\bigg\} \leq 3.
\end{aligned}\end{equation}
Therefore, it follows from (\ref{eq:::4.25}), (\ref{eq:::4.26}), and Jensen's inequality that
\begin{equation}\label{eq:::4.27}
	\begin{aligned}
		&\ \dbE_\tau\exp\bigg\{6\e_0\lambda_{0}\int_{\tau}^{T}\dbE_{s}[|Z_{s+\zeta_s}|]ds\bigg\} \leq 3\exp\Big\{18\e_0^2\lambda_{0}^2 L^2T \|Z\|^{2}_{\mathcal{Z}^2_{\dbF}(\tau,T+K;\dbR^{n\times d})}\Big\}\\
		&\leq 3\mathop{\sup}\limits_{\bar{\tau}\in \mathscr{T}[\tau, T+K]}\Big\|\dbE_{\bar{\tau}}\exp\Big\{18\e_0^2\lambda_{0}^2 L^2T\int_{\bar{\tau}}^{T+K} |Z_{s}|^{2}ds\Big\}\Big\|_{\i}.
\end{aligned}\end{equation}
Note that by Young's inequality, for any  positive constant $a$ and  $b$, one has 
\begin{align}\label{young}
	ab^{1+\alpha} = \bigg\{\Big(\frac{1+\alpha}{2}\Big)^{\frac{1+\alpha}{1-\alpha}} a^{\frac{2}{1-\alpha}}\bigg\}^{\frac{1-\alpha}{2}}
	\Big(\frac{2}{1+\alpha}b^2\Big)^{\frac{1+\alpha}{2}}
	 \leq b^2 + \frac{1-\alpha}{2}\Big(\frac{1+\alpha}{2}\Big)^{\frac{1+\alpha}{1-\alpha}}a^{\frac{2}{1-\alpha}}.
\end{align}
Then, by letting $a = 6 \lambda$ and $b = (\e_0)^{\frac{1}{1+\alpha}}|Z_s|$, we have 
\begin{align}\label{eq::4.23}
	\dbE_\tau\exp\bigg\{6\e_0\lambda \int_{\tau}^{T}|Z_s|^{1+\alpha}ds\bigg\} \leq \exp\{C_1 T\}\mathop{\sup}\limits_{\bar{\tau}\in \mathscr{T}[\tau, T+K]}\bigg\|\dbE_{\bar{\tau}}\exp \bigg\{\e_0^{\frac{2}{1+\alpha}}\int_{\bar{\tau}}^{T+K}|Z_s|^2ds\bigg\} \bigg\|_{\i},
\end{align} 
where 
\begin{align*}
	C_1 = \frac{1-\alpha}{2}\Big(\frac{1+\alpha}{2}\Big)^{\frac{1+\alpha}{1-\alpha}}(6 \lambda)^{\frac{2}{1-\alpha}}.
\end{align*}
Thus, by combining (\ref{eq::4.22}), (\ref{eq:::4.27}), and (\ref{eq::4.23}), we obtain  that for any $\tau\in \mathscr{T}[T-\kappa, T]$,
\begin{equation*}
	\begin{aligned}
		\dbE_\tau\exp \bigg\{ \frac{\gamma\e_0}{2n} \int_{\tau}^{T}|Z_s|^2ds\bigg\}
		&\leq 3\exp\Big\{3\e_0 M_3 +  C_1 T+
		3\e_0\big[2+(\sigma+\sigma_0)T\big]\|Y\|_{\mathcal{H}^\i_{\dbF}(\tau, T+K;\dbR^n)}\Big\}  \\
		&\ \q\cdot \mathop{\sup}\limits_{\bar{\tau}\in\mathscr{T}[\tau, T+K]}\bigg\|\dbE_{\bar{\tau}}\exp\bigg\{2\e_0^{\frac{2}{1+\alpha}}(18\lambda_{0}^2 L^2T+1)\int_{\bar{\tau}}^{T+K} |Z_{s}|^{2}ds\bigg\}\bigg\|_{\i},
\end{aligned}\end{equation*}
%
which implies that
\begin{equation}\label{eq::4.27}
	\begin{aligned}
		&\mathop{\sup}\limits_{\bar{\tau}\in\mathscr{T}[\tau, T]}\bigg\|\dbE_{\bar{\tau}}\exp\bigg\{\frac{\gamma\e_0}{2n} \int_{\bar{\tau}}^T|Z_s|^2ds\bigg\} \bigg\|_{\i} \\
		&\leq 3\exp\Big\{3\e_0 M_3 +  C_1 T+
		3\e_0\big[2+(\sigma+\sigma_0)T\big]\|Y\|_{\mathcal{H}^\i_{\dbF}(\tau, T+K;\dbR^n)}\Big\} \\
		&\q\ \cdot \mathop{\sup}\limits_{\bar{\tau}\in\mathscr{T}[\tau, T+K]}\bigg\|\dbE_{\bar{\tau}}\exp\bigg\{2\e_0^{\frac{2}{1+\alpha}}(18\lambda_{0}^2 L^2T+1)\int_{\bar{\tau}}^{T+K} |Z_{s}|^{2}ds\bigg\}\bigg\|_{\i}.
\end{aligned}\end{equation}
Therefore, by \rf{20241216} and H\"{o}lder's inequality, we deduce that for $\tau\in \mathscr{T}[T-\kappa, T]$,
\begin{align}\label{eq::4.30}
	&\mathop{\sup}\limits_{\bar{\tau}\in\mathscr{T}[\tau, T+K]}\bigg\|\dbE_{\bar{\tau}}\exp\bigg\{\frac{\gamma\e_0}{4n} \int_{\bar{\tau}}^{T+K}|Z_s|^2ds\bigg\}\bigg\|_{\i}\nonumber\\
	& \leq \mathop{\sup}\limits_{\bar{\tau}\in \mathscr{T}[\tau, T]}\bigg\|\dbE_{\bar{\tau}}\exp\bigg\{\frac{\gamma\e_0}{4n} \int_{\bar{\tau}}^{T+K}|Z_s|^2ds\bigg\}\bigg\|_{\i} + \mathop{\sup}\limits_{\bar{\tau}\in\mathscr{T}[T, T+K]}\bigg\|\dbE_{\bar{\tau}}\exp\bigg\{\frac{\gamma\e_0}{4n} \int_{\bar{\tau}}^{T+K}|Z_s|^2ds\bigg\}\bigg\|_{\i}\nonumber\\
	&  \leq \mathop{\sup}\limits_{\bar{\tau}\in\mathscr{T}[\tau, T]}\Bigg(\bigg\|\dbE_{\bar{\tau}}\exp\bigg\{\frac{\gamma\e_0}{2n} \int_{\bar{\tau}}^{T}|Z_s|^2ds\bigg\}\bigg\|_{\i}\cdot\bigg\|\dbE_{\bar{\tau}}\dbE_{T}\exp\bigg\{\frac{\gamma\e_0}{2n} \int_T^{T+K}|\eta_s|^2ds\bigg\}\bigg\|_{\i}\Bigg)
	+ M_4\nonumber\\
	& \leq  \mathop{\sup}\limits_{\bar{\tau}\in \mathscr{T}[\tau, T]}\Bigg(\bigg\|\dbE_{\bar{\tau}}\exp\bigg\{\frac{\gamma\e_0}{2n} \int_{\bar{\tau}}^{T}|Z_s|^2ds\bigg\}\bigg\|_{\i}\cdot\bigg\|\mathop{\sup}\limits_{\bar{\tau}\in \mathscr{T}[T, T+K]}\dbE_{\bar{\tau}}\exp\bigg\{\frac{\gamma\e_0}{2n} \int_{\bar{\tau}}^{T+K}|\eta_s|^2ds\bigg\}\bigg\|_{\i} \Bigg)+ M_4\nonumber\\
	&\leq  M_4\mathop{\sup}\limits_{\bar{\tau}\in \mathscr{T}[\tau, T]}\bigg\|\dbE_{\bar{\tau}}\exp\bigg\{\frac{\gamma\e_0}{2n} \int_{\bar{\tau}}^{T}|Z_s|^2ds\bigg\}\bigg\|_{\i} +M_4,
\end{align}
%
which combining  (\ref{eq::4.27}) and (\ref{eq::4.30}) deduce that 
\begin{align*}
	&\mathop{\sup}\limits_{\bar{\tau}\in \mathscr{T}[\tau, T+K]}\bigg\|\dbE_{\bar{\tau}}\exp\bigg\{\frac{\gamma\e_0}{4n} \int_{\bar{\tau}}^{T+K}|Z_s|^2ds\bigg\}\bigg\|_{\i}\\
	&\leq M_4 \mathop{\sup}\limits_{\bar{\tau}\in \mathscr{T}[\tau, T]}\bigg\|\dbE_{\bar{\tau}}\exp\bigg\{\frac{\gamma\e_0}{2n} \int_{\bar{\tau}}^{T}|Z_s|^2ds\bigg\}\bigg\|_{\i} +M_4\\
	&\leq 2M_4\mathop{\sup}\limits_{\bar{\tau}\in \mathscr{T}[\tau, T]}\bigg\|\dbE_{\bar{\tau}}\exp\bigg\{\frac{\gamma\e_0}{2n} \int_{\bar{\tau}}^{T}|Z_s|^2ds\bigg\}\bigg\|_{\i}\\
	&\leq  6M_4\exp\Big\{3\e_0 M_3 +  C_1 T+
	3\e_0\big(2+(\sigma+\sigma_0)T\big)\|Y\|_{\mathcal{H}^\i_{\dbF}(\tau, T+K;\dbR^n)}\Big\}\\
	&\q \cdot \mathop{\sup}\limits_{\bar{\tau}\in \mathscr{T}[\tau, T+K]}\bigg\|\dbE_{\bar{\tau}}\exp\bigg\{2\e_0^{\frac{2}{1+\alpha}}(18\lambda_{0}^2 L^2T+1)\int_{\bar{\tau}}^{T+K} |Z_{s}|^{2}ds\bigg\}\bigg\|_{\i}\\
	&\leq  6M_4\exp\Big\{3\e_0 M_3 + C_1 T + 3\e_0\big(2+(\sigma+\sigma_0)T\big)\|Y\|_{\mathcal{H}^\i_{\dbF}(\tau, T+K;\dbR^n)}\Big\} \\
	&\q \cdot\mathop{\sup}\limits_{\bar{\tau}\in \mathscr{T}[\tau, T+K]}\bigg\|\dbE_{\bar{\tau}}\exp\bigg\{\frac{\gamma\e_0}{8n} \int_{\bar{\tau}}^{T+K}|Z_s|^2ds\bigg\}\bigg\|_{\i}.
\end{align*}
Hence, on one hand, by applying Jensen's inequality, we have that for any $\tau\in \mathscr{T}[T-\kappa, T]$,
\begin{equation}\label{eq::4.32}
	\begin{aligned}
		&\mathop{\sup}\limits_{\bar{\tau}\in \mathscr{T}[\tau, T+K]}\bigg\|\dbE_{\bar{\tau}}\exp\bigg\{\frac{\gamma\e_0}{4n} \int_{\bar{\tau}}^{T+K}|Z_s|^2ds\bigg\}\bigg\|_{\i}\\
		&\leq 36M_4^2\exp\Big\{6\e_0 M_3 + 2C_1 T+ 6\e_0\big(2+(\sigma+\sigma_0)T\big)\|Y\|_{\mathcal{H}^\i_{\dbF}(\tau, T+K;\dbR^n)}\Big\}.
\end{aligned}\end{equation}
On the other hand, similarly to the deduction of (\ref{eq::4.12}) and by using H\"{o}lder's inequality, we have that for any $\tau\in \mathscr{T}[T - \kappa, T]$,
\begin{equation}\label{eq::4.33}
	\begin{aligned}
		\exp(\gamma|Y_\tau|) &\leq \exp\Big\{n\gamma (M_1 + M_3) + n(\sigma + \sigma_0)\gamma\|Y\|_{\mathcal{H}^\i_{\dbF}(\tau,T+K;\dbR^n)}(T-\tau) \Big\}\\
		&\q \cdot\dbE_{\tau}\exp\bigg\{n\lambda\gamma\int_{\tau}^{T}|Z_s|^{1+\alpha}ds + n\lambda_{0}\gamma \int_{\tau}^{T}\dbE_{s}[|Z_{s+\zeta_s}|]ds\bigg\}
		\\
		&\leq \exp\Big\{n\gamma (M_1 + M_3) + n(\sigma + \sigma_0)\gamma\|Y\|_{\mathcal{H}^\i_{\dbF}(\tau,T+K;\dbR^n)}(T-\tau) \Big\}\\
		&\q \cdot\dbE_{\tau}\exp\bigg\{2n\lambda\gamma\int_{\tau}^{T}|Z_s|^{1+\alpha}ds\bigg\}\cdot
		\dbE_{\tau}\exp\bigg\{2n\lambda_{0}\gamma \int_{\tau}^{T}\dbE_{s}[|Z_{s+\zeta_s}|]ds\bigg\}.\\
	\end{aligned}
\end{equation}
For the last term in the above inequality, noting that we have previously assumed  $\|\dbE_{.}[|Z_{.+\zeta_{.}}|]\|^2_{\mathcal{Z}^2_{\dbF}(\tau,T;\dbR)} \neq 0$, and using arguments similarly to those in \rf{eq:::4.25} combined with the fact that $\kappa \leq \frac{\e_0}{8n^3\lambda_{0}^2\gamma L^2}$, 
 we conclude that for any $\tau\in \mathscr{T}[T-\kappa, T]$, 
\begin{equation}\label{eq::4.34}
	\begin{aligned}
		2n\lambda_{0}\gamma \dbE_{s}[|Z_{s+\zeta_s}|]
		&\leq  \frac{4n^3\lambda_{0}^2\gamma \kappa L^2}{\e_0\|\dbE_{.}[|Z_{.+\zeta_{.}}|]\|^2_{\mathcal{Z}^2_{\dbF}(\tau,T;\dbR)}}\Big\{\dbE_{s}[|Z_{s+\zeta_s}|]\Big\}^2 +  \frac{\gamma\e_0}{4n\kappa} \|Z\|^{2}_{\mathcal{Z}^2_{\dbF}(\tau,T+K;\dbR^{n\times d})}\\
		&\leq \frac{1}{2\|\dbE_{.}[|Z_{.+\zeta_{.}}|]\|^2_{\mathcal{Z}^2_{\dbF}(\tau,T;\dbR)}}\Big\{\dbE_{s}[|Z_{s+\zeta_s}|]\Big\}^2 +  \frac{\gamma\e_0}{4n\kappa} \|Z\|^{2}_{\mathcal{Z}^2_{\dbF}(\tau,T+K;\dbR^{n\times d})}.
\end{aligned}\end{equation}
%
%
Similarly to get \rf{eq:::4.26}, we have that for $\tau\in \mathscr{T}[T-\kappa, T]$, 
\begin{equation}\label{eq::4.300}
	\begin{aligned}
		&\ \dbE_{\tau} \exp\bigg\{\frac{1}{2\|\dbE_{.}[|Z_{.+\zeta_{.}}|]\|^2_{\mathcal{Z}^2_{\dbF}(\tau,T;\dbR)}}\int_{\tau}^{T}|\dbE_{s}[|Z_{s+\zeta_s}|]|^2ds\bigg\} 
		\leq 3.
\end{aligned}\end{equation}
Thus, by (\ref{eq::4.32}), (\ref{eq::4.300}), and  Jensen's inequality, we obtain that for $\tau\in \mathscr{T}[T-\kappa, T]$,
\begin{equation}\label{eq::4.36}
	\begin{aligned}
		&\ \dbE_{\tau}\exp\bigg\{2n\lambda_{0}\gamma \int_{\tau}^{T}\dbE_{s}[|Z_{s+\zeta_s}|]ds\bigg\}\\
		&\leq 3\exp\Big\{\frac{\gamma\e_0}{4n} \|Z\|^{2}_{\mathcal{Z}^2_{\dbF}(\tau,T+K;\dbR^{n\times d})}\Big\}\\
		&\leq 3\mathop{\sup}\limits_{\bar{\tau}\in\mathscr{T}[\tau, T+K]}\bigg\|\dbE_{\bar{\tau}}\exp\bigg\{\frac{\gamma\e_0}{4n} \int_{\bar{\tau}}^{T+K}|Z_s|^2ds\bigg\}\bigg\|_{\i}\\
		& \leq 108M_4^2\exp\Big\{6\e_0 M_3 + 2C_1 T+ 6\e_0\big(2+(\sigma+\sigma_0)T\big)\|Y\|_{\mathcal{H}^\i_{\dbF}(\tau, T+K;\dbR^n)}\Big\}.
\end{aligned}\end{equation}
Now, by letting $a = \frac{8n^2 \lambda}{\e_0}$ and $b = |Z_s|$ in  the inequality (\ref{young}), we have
\begin{align}
	2n\lambda\gamma|Z_s|^{1+\alpha} = \frac{\gamma\e_0}{4n}
	\Big\{\frac{8n^2 \lambda}{\e_0}|Z_s|^{1+\alpha}\Big\} \leq \frac{\gamma\e_0}{4n}|Z_s|^2 + C_2,
\end{align}
where
\begin{align*}
	C_2  \triangleq \frac{\gamma \e_0(1-\alpha)}{8n}\Big(\frac{1+\alpha}{2}\Big)^{\frac{1+\alpha}{1-\alpha}}\Big(\frac{8n^2 \lambda}{\e_0}\Big)^{\frac{2}{1-\alpha}}.
\end{align*}
Thus, we have that for $\tau \in \mathscr{T}[T-\kappa, T]$,
\begin{equation}\label{eq::4.37}
	\begin{aligned}
		&\ \dbE_{\tau}\exp\bigg\{2n\lambda\gamma\int_{\tau}^{T}|Z_s|^{1+\alpha}ds\bigg\} \\
		&\leq \exp\{C_2 T\}\mathop{\sup}\limits_{\bar{\tau}\in \mathscr{T}[\tau, T+K]}\bigg\|\dbE_{\bar{\tau}}\exp\bigg\{\frac{\gamma\e_0}{4n} \int_{\bar{\tau}}^{T+K}|Z_s|^2ds\bigg\}\bigg\|_{\i}\\
		&\leq 36M_4^2\exp\Big\{6\e_0 M_3 + 2C_1 T+ C_2 T + 6\e_0\big(2+(\sigma+\sigma_0)T\big)\|Y\|_{\mathcal{H}^\i_{\dbF}(\tau, T+K;\dbR^n)}\Big\}.
\end{aligned}\end{equation}
By combining (\ref{eq::4.33}), (\ref{eq::4.36}), and (\ref{eq::4.37}), we obtain that
for any $\tau\in \mathscr{T} [T-\kappa, T]$,%
\begin{align*}
	|Y_\tau| &\leq n M_1 +\ln \frac{3888 M_4^4 }{\gamma} + \frac{4C_1 T}{\gamma} + \frac{C_2 T}{\gamma}+ nM_3+\frac{12\e_0 M_3}{\gamma}  \\
	&\q+ n(\sigma + \sigma_0)\|Y\|_{\mathcal{H}^\i_{\dbF}(\tau,T+K;\dbR^n)}(T-\tau)+\frac{12\e_0\big(2+(\sigma+\sigma_0)T\big)}{\gamma} \|Y\|_{\mathcal{H}^\i_{\dbF}(\tau, T+K;\dbR^n)}.
\end{align*}
Thus, for any $t\in  [T-\kappa, T]$,
\begin{align*}
	|Y_t| &\leq n M_1 +\ln \frac{3888 M_4^4 }{\gamma} + \frac{4C_1 T}{\gamma} + \frac{C_2 T}{\gamma}+ nM_3+\frac{12\e_0 M_3}{\gamma}  \\
	&\q+ n(\sigma + \sigma_0)\|Y\|_{\mathcal{H}^\i_{\dbF}(t,T+K;\dbR^n)}(T-t)+\frac{12\e_0\big(2+(\sigma+\sigma_0)T\big)}{\gamma} \|Y\|_{\mathcal{H}^\i_{\dbF}(t, T+K;\dbR^n)}.
\end{align*}
Moreover, noting  \rf{definitione_01}, we deduce that for $t\in [T-\kappa, T]$,
\begin{align}\label{eq::4.38}
	\|Y\|_{\mathcal{H}^\i_{\dbF}(t,T+K;\dbR^n)} \leq 2n (M_1 + M_5) + 2n(\sigma + \sigma_0)(T-t)
	\|Y\|_{\mathcal{H}^\i_{\dbF}(t,T+K;\dbR^n)},
\end{align}
where 
\begin{align*}
	M_5 \triangleq \frac{1}{n}\ln \frac{3888 M_4^4 }{\gamma}   + M_3 + \frac{12\e_0 M_3}{n\gamma} + \frac{4C_1 T}{n\gamma} +  \frac{C_2 T}{n\gamma}.
\end{align*}
Finally, noting that the form of (\ref{eq::4.38})  is consistent with (\ref{eq::4.13}),  so  similarly to the discussion of  step 1, we have that
\begin{align}
	\|Y\|_{\mathcal{H}^\i_{\dbF}(T-\kappa, T+K;\dbR^n)} \leq (4n)^{[4n(\sigma + \sigma_0)T] +1}M_1 + \big\{4n + (4n)^2 + ... + (4n)^{[4n(\sigma + \sigma_0)T]+1}\big\}M_5 \triangleq Q_3.\label{eq::4.43}
\end{align}
Hence, the first  estimate of (\ref{eq:::4.13}) holds.
Moreover, for the second estimate of  (\ref{eq:::4.13}),
by integrating (\ref{eq::4.32}) and (\ref{eq::4.43}), and noting  that
\begin{align*}
	\frac{\gamma\e_0}{4n}\|Z\|^{2}_{\mathcal{Z}^2_{\dbF}(T-\kappa,T + K; \dbR^{n\times d})}\leq \mathop{\sup}\limits_{\tau\in \mathscr{T}[T-\kappa, T+K]}\bigg\|\dbE_{\tau}\exp\bigg\{\frac{\gamma\e_0}{4n} \int_\tau^{T+K}|Z_s|^2ds\bigg\}\bigg\|_{\i},
\end{align*}
we have 
\begin{align*}
	\|Z\|^{2}_{\mathcal{Z}^2_{\dbF}(T-\kappa,T + K; \dbR^{n\times d})}\leq \frac{144nM_4^2}{\gamma\e_0}\exp\Big\{6\e_0 M_3 + 2C_1 T+ 6\e_0\big(2+(\sigma+\sigma_0)T\big)Q_3\Big\}\triangleq Q_4.
\end{align*}
This completes the proof.
\end{proof}
Based on the above result, we are now able to prove \autoref{global_bdd} and \autoref{global_bdd2}. 
\begin{proof}[{ Proof of \autoref{global_bdd} and \autoref{global_bdd2}}]
First,  on the interval $[0, T+K]$,  \autoref{thm:2.2}, with $M_1$ and $M_2$  replaced by $Q_1$ and $Q_2$, respectively,  implies that we can choose a positive constant $\kappa_0$ such that $\kappa_0\leq \frac{\e_0}{8n^3\lambda_{0}^2\gamma L^2}$, $\frac{T}{\kappa_0} \in \dbN$, and  BSDE (\ref{eq::1.1}) admits a unique local solution $(Y^{(1)}, Z^{(1)})$ on the  interval $[T-\kappa_0, T+K]$. Furthermore, \autoref{lemma:4.4} implies that  
$$\|Y^{(1)}\|_{\mathcal{H}^\i_{\dbF}(T-\kappa_0,T+K)}\leq Q_1\q\hbox{and}\q\|Z^{(1)}\|^{2}_{\mathcal{Z}^2_{\dbF}(T-\kappa_0,T + K)}\leq Q_2.$$
Second, on the interval $[0, T-\kappa_0+K]$,  we let  $Y_{t} = Y^{(1)}_t$ and $Z_{t} = Z^{(1)}_t$ when $t\in[T - \kappa_0, T - \kappa_0 + K]$. Then,  \autoref{thm:2.2}  implies again that  BSDE (\ref{eq::1.1}) admits a unique local solution $(Y^{(2)}, Z^{(2)})$ on the  interval $[T-2\kappa_0, T-\kappa_0+K]$. Now,  if we  define 
	\begin{align*}
		(\tilde{Y}_t, \tilde{Z}_t) \triangleq (Y^{(1)}_t, Z^{(1)}_t) {\bf 1}_{[T-\kappa_0 +K, T+K]}(t) + (Y^{(2)}_t, Z^{(2)}_t) {\bf 1}_{[T-2\kappa_0, T-\kappa_0+K]}(t), 
	\end{align*}
	then  the pair $(\tilde{Y}, \tilde{Z})$ is the unique solution of  BSDE (\ref{eq::1.1})  on the  interval $[T-2\kappa_0, T+K]$.
	 Moreover, 	 \autoref{lemma:4.4}  implies  that $$\|\tilde{Y}\|_{\mathcal{H}^\i_{\dbF}(T-2\kappa_0,T+K)}\leq Q_1\q {\rm and}\q \|\tilde{Z}\|^{2}_{\mathcal{Z}^2_{\dbF}(T-2\kappa_0,T + K)}\leq Q_2.$$ 
	Third, by proceeding with the above process for finite times, we conclude that \autoref{global_bdd} holds for $J_1 = Q_1$ and $J_2 = Q_2.$ 
	 Finally, similarly to the deduction of \autoref{global_bdd},  we have that \autoref{global_bdd2}  also holds  for $J_3 = Q_3$ and $J_4 = Q_4$.
\end{proof}

\section{Global solution with unbounded terminal value}\label{section5}

In this section, we study global solutions of quadratic ABSDEs with unbounded terminal values. Let us consider the following type of BSDEs:
\begin{equation}\left\{\begin{aligned}\label{eq:4.1}
		&Y_t = \xi_{T} + \int_{t}^{T}f(s, Y_s, Z_s, Y_{s+\delta_s})dt - \int_{t}^{T}Z_s dW_s,  \quad \quad  t\in [0,T]; \\
		&Y_t = \xi_t, \quad \quad  Z_t = \eta_t, \quad \quad    t\in[T, T+K],
	\end{aligned}\right.\end{equation}
where the  parameter $\delta$ still satisfy the conditions in \rf{eqqq:3.2} and \rf{eq::3.2}. Note that $f(\omega, t, y, z, \phi_r): { \Om \ts [0, 	T] \ts \dbR^n \times \dbR^{n\times d}\times L^2_{\sF_r}(\Om; \dbR^n) \rightarrow L^2_{\sF_t}(\Om;\dbR^n)}$ and $f(\cdot, \cdot, y, z, \phi_r)$ is $\dbF$-progressively measurable.

\begin{assumption}\label{assumption3} \rm 
For $i=1,...,n$ and $d\dbP \times dt$-a.e.,
for all $t\in [0, T]$,  $r\in [t,T+K]$, $y, \bar{y} \in \dbR^{n}, z\in \dbR^{n\times d}, \phi, \bar{\phi}\in \mathcal{H}^2_{\dbF}(t,T + K;\dbR^{n}),$ the generator $f^{i}(t, y, z, \phi_r)$ only varies with $(t, y, z^{i}, \phi)$ and satisfies the following conditions:
\begin{itemize}
	\item [$\rm(i)$] There exist a positive constant $C$ and a non-negative process $\G$ satisfying  $\dbE\exp\{p\int_{0}^{T}\Gamma_tdt\} <+\i$ for any $ p\geq 1$ such that 
	\begin{align*}
	&	|f^{i}(t, y, z, \phi_{r})|  \leq \Gamma_t + 
		C\{|y| + \dbE_t[|\phi_{r}|]\}+  \frac{\gamma}{2}|z^i|^2,\\
	&	|f^{i}(t, y, z, \phi_{r}) - f^{i}(t, \bar{y}, z, \bar{\phi}_{r})|
		\leq C\Big\{|y - \bar{y}| + \dbE_t[|\phi_{r} - \bar{\phi}_{r}|]\Big\}.
	\end{align*}
	\item [$\rm(ii)$] The generator  $f^{i}(t, y, \cdot, \phi_r)$ is either convex or concave for any $y\in \dbR^n$ and $\phi_r \in L^2_{\sF_r}(\Om; \dbR^n)$.
	\item [$\rm(iii)$] The coefficients $\xi\in\mathcal{E}(T,T+K; \dbR^n)$ and $\eta \in \mathcal{M}(T,T+K; \dbR^{n\times d})$.
\end{itemize}
\end{assumption}


The following theorem is the main result of this section, which implies  the existence and uniqueness of global solutions of BSDE (\ref{eq:4.1}) with unbounded terminal value.
\begin{theorem}\label{thm:5.2} \sl 
Under \autoref{assumption3},  BSDE \rf{eq:4.1} possesses a unique global solution $(Y,Z)\in \mathcal{E}(0,T+K; \dbR^{n}) \times \mathcal{M}(0,T+K; \dbR^{n\times d})$ on the whole interval $[0, T+K]$.
\end{theorem}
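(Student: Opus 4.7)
The plan is to combine a Picard-type iteration that freezes the anticipated argument with uniform exponential-moment estimates, and to close the argument by a $\theta$-method in the spirit of Briand--Hu \cite{Briand and Hu} and Fan--Hu--Tang \cite{fan2023multi}. First I would set up the iteration: put $(Y^{0}, Z^{0}) \equiv (0, 0)$ on $[0, T]$ and $(Y^{0}_t, Z^{0}_t) = (\xi_t, \eta_t)$ on $[T, T+K]$, and for $k \geq 0$ solve the non-anticipated BSDE
\begin{equation*}
Y^{k+1}_t = \xi_T + \int_t^T f(s, Y^{k+1}_s, Z^{k+1}_s, Y^{k}_{s+\delta_s})\, ds - \int_t^T Z^{k+1}_s\, dW_s, \q t\in[0,T],
\end{equation*}
completed by $(Y^{k+1}_t, Z^{k+1}_t) = (\xi_t, \eta_t)$ on $[T, T+K]$. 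For fixed $Y^k$, the driver $(t, y, z) \mapsto f(t, y, z, Y^{k}_{t+\delta_t})$ inherits from \autoref{assumption3} the diagonally quadratic structure, the Lipschitz dependence in $y$, the convexity/concavity in each $z^i$, and the pointwise bound $\Gamma_t + C(|y| + \dbE_t[|Y^{k}_{t+\delta_t}|]) + \frac{\gamma}{2}|z^i|^2$. Hence, by the diagonally quadratic result of \cite{fan2023multi} with unbounded terminal values, this BSDE has a unique solution $(Y^{k+1}, Z^{k+1}) \in \mathcal{E}(0,T;\dbR^n) \times \mathcal{M}(0,T;\dbR^{n\times d})$.

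Next, I would derive a priori exponential-moment estimates uniform in $k$. Applying \autoref{lemma:4.1} componentwise with $\bar{\zeta_s} = \Gamma_s + C(|Y^{k+1}_s| + \dbE_s[|Y^{k}_{s+\delta_s}|])$ yields
\begin{equation*}
\exp\{\gamma|Y^{k+1,i}_t|\} \leq \dbE_t \exp\Big\{\gamma|\xi^i_T| + \gamma\int_t^T \Gamma_s\, ds + \gamma C\int_t^T \{|Y^{k+1}_s| + \dbE_s[|Y^{k}_{s+\delta_s}|]\}\, ds\Big\}.
\end{equation*}
Using \eqref{eq::3.2} to replace the shifted integral by $L\int_t^{T+K}|Y^{k}_s|\, ds$ (with $Y^k \equiv \xi$ on $[T, T+K]$) and a Gronwall iteration on short subintervals covering $[0, T+K]$, one obtains $\sup_k \dbE \exp\{p\sup_{t\in[0,T+K]}|Y^{k}_t|\} < \infty$ for every $p \geq 1$. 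A similar exponential bound on $\int_0^{T+K} |Z^{k}_s|^2\, ds$ follows from It\^o's formula applied to $\exp\{\gamma|Y^{k,i}|\}$ combined with BDG-type inequalities.

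The heart of the argument is convergence via the $\theta$-method. For $\theta \in (0,1)$ and indices $k, l$, set $\delta^{\theta} Y = Y^{k+1} - \theta Y^{l+1}$ and $\delta^{\theta} Z = Z^{k+1} - \theta Z^{l+1}$. The convexity (resp.\ concavity) of $f^i$ in $z^i$, together with the Lipschitz assumption in $(y, \phi)$, yields a scalar BSDE for $\delta^\theta Y^i$ whose driver is bounded from above by a linear expression in $\delta^\theta Z^i$ with BMO coefficients (owing to the uniform $\mathcal{Z}^2$ bounds deduced above), plus a $(1-\theta)$-multiple of an integrable term, plus a Lipschitz contribution in $\delta^\theta Y$ and in the anticipated increment $\delta^\theta Y_{\cdot+\delta_\cdot}$. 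A Girsanov change of measure absorbs the linear $\delta^\theta Z^i$ term; on $[T, T+K]$ one has $\delta^\theta Y = (1-\theta)\xi$ and $\delta^\theta Z = (1-\theta)\eta$, so the shifted argument contributes only through $(1-\theta)$-terms plus a feedback term in $\delta^\theta Y$ restricted to $[0,T]$. A Gronwall argument on a short subinterval, iterated over finitely many pieces, shows that $(Y^k, Z^k)$ is Cauchy in a suitable space as $k \to \infty$, uniformly in $\theta$ close to $1$. Passing to the limit in $k$ and then letting $\theta \to 1$ (using the uniform exponential moments to justify dominated convergence in the quadratic term) produces a global solution $(Y, Z) \in \mathcal{E}(0, T+K; \dbR^n) \times \mathcal{M}(0, T+K; \dbR^{n\times d})$. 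Uniqueness follows by applying the same $\theta$-estimate to any two candidate solutions.

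The main obstacle will be this $\theta$-step: the shifted anticipated term $Y^{l+1}_{s+\delta_s}$ forces the Gronwall inequality to range over $[t, T+K]$ rather than $[t, T]$, so one cannot close directly on $[0,T]$. Closing the loop requires carefully combining \eqref{eq::3.2} with the boundary identity $\delta^\theta Y = (1-\theta)\xi$ on $[T, T+K]$, so that the anticipated contribution splits into a $(1-\theta)$-error plus a term involving only $\delta^\theta Y$ on $[0, T]$ that can be absorbed by Gronwall. A secondary subtlety is that the Lipschitz bound in $\phi$ is only in conditional expectation $\dbE_t[|\phi_r - \bar\phi_r|]$, so these conditional expectations must be propagated through the full manipulation rather than estimated pointwise, which is where the linear-growth structure of assumption \autoref{assumption3}(i) in the anticipated variable becomes essential.
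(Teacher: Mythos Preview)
Your overall architecture---Picard iteration freezing the anticipated argument, uniform exponential moments, then a $\theta$-method for convergence---matches the paper's. But the convergence step has a genuine gap.

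You assert that convexity of $f^i$ in $z^i$ yields, for $\delta^\theta Y^i$, a driver bounded above by a \emph{linear} term in $\delta^\theta Z^i$ with BMO coefficients, to be removed by Girsanov. Neither claim holds in this setting. First, with unbounded terminal values there are no $\mathcal{Z}^2$/BMO bounds on $Z^k$: your a priori step (and the paper's Step~3) delivers only $\sup_k\|Z^k\|_{\mathcal{H}^q}<\infty$ for every $q$---an exponential moment of $\int|Z^k|^2$ is still not a BMO norm---and this is insufficient for a Girsanov change of measure. Second, convexity does not linearize the $z$-dependence; what it gives, via
\[
f^i\bigl(s,\cdot,(1-\theta)z+\theta Z^{m;i}_s,\cdot\bigr)\le(1-\theta)f^i(s,\cdot,z,\cdot)+\theta f^i(s,\cdot,Z^{m;i}_s,\cdot),
\]
is a one-sided \emph{quadratic} upper bound $\frac{\gamma}{2}|z|^2$ on the driver of $\Delta_\theta Y^{m,p;i}:=(Y^{m+p;i}-\theta Y^{m;i})/(1-\theta)$, plus Lipschitz terms in $\Delta_\theta Y^{m-1,p}$ and its anticipated shift. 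The paper then applies \autoref{lemma:4.1} to this one-sided quadratic BSDE to bound $\exp\{\gamma(\Delta_\theta Y^{m,p;i})^+\}$, repeats the argument for the reversed difference $\Delta_\theta\bar Y^{m,p}:=(Y^m-\theta Y^{m+p})/(1-\theta)$ to control $(\Delta_\theta Y^{m,p;i})^-$, and combines the two into an exponential bound on $|\Delta_\theta Y^{m,p}|$. Because the right side still contains $\Delta_\theta Y^{m-1,p}$, one must then run an induction in $m$ together with the sub-interval slicing you describe. Without this quadratic-plus-\autoref{lemma:4.1} route the estimate does not close; the Girsanov step you propose is simply unavailable here.

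A smaller point: the paper also freezes the $y$-slot at the previous iterate $Y^m$, so that each step is a decoupled system of \emph{scalar} quadratic BSDEs handled by Briand--Hu \cite{Briand and Hu}, rather than a multi-dimensional system requiring \cite{fan2023multi}. Your choice is not wrong, but carrying $Y^{k+1}$ implicitly on the right of the a priori bound makes the uniform-in-$k$ exponential estimate more delicate than your sketch acknowledges.
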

\begin{proof}
For simplicity of presentation, we would like to divide the proof into six steps. 
First, we shall  define a sequence of processes $\{(Y^{m}, Z^{m})\}_{m=0}^{\i}$ in the space $\mathcal{E}(0,T+K; \dbR^{n}) \times \mathcal{M}(0,T+K; \dbR^{n\times d})$. 

\ms

\noindent {\it Step 1:  Define a sequence of processes recursively}

\ms


For any given pair $(U, V) \in \mathcal{E}(0,T+K; \dbR^{n}) \times \mathcal{M}(0,T+K; \dbR^{n\times d}),$ we consider the following system of decoupled BSDEs: for each $i = 1, ..., n,$
\begin{equation}\left\{\begin{aligned}\label{eq:4.5}
		&Y_{t}^{i} = \xi^{i}_{T} + \int_{t}^{T}f^{i}(s, U_s, Z^{i}_s, U_{s+\delta_s})dt - \int_{t}^{T}Z^{i}_s dW_s,  \quad   t\in [0,T]; \\
		&Y^{i}_t = \xi^{i}_t, \quad    Z^{i}_t = \eta^{i}_t, \quad       t\in[T, T+K].
	\end{aligned}\right.\end{equation}
In view of item (i) of \autoref{assumption3}, for any $s\in[t,T]$ and $z\in \dbR^{d}$, 
\begin{equation*}
|f^{i}(s, U_s, z, U_{s+\delta_s})| \leq \Gamma_s + 
C\{|U_s| + \dbE_t[|U_{s+\delta_s}|] \}+  \frac{\gamma}{2}|z|^2.
\end{equation*}
By applying H\"{o}lder's inequality, we have that for any $q > 1$, 
\begin{align*}
&\ \dbE\exp\bigg\{q|\xi_{T}^{i}| + q\int_{0}^{T}\big(\Gamma_s + C|U_s| + C\dbE_s[|U_{s+\delta_s}|]\big)ds\bigg\}\\
&\leq \Big(\dbE\exp\big\{2q\xi^{*}\big\}\Big)^{\frac{1}{2}} \bigg(\dbE \exp\bigg\{2q\int_{0}^{T}
\big(\Gamma_s + C|U_s| + C\dbE_s[|U_{s+\delta_s}|]\big)
ds\bigg\}\bigg)^{\frac{1}{2}} \\
&\leq \Big(\dbE\exp\big\{2q\xi^{*}\big\}\Big)^{\frac{1}{2}} \bigg(\dbE \exp\bigg\{4q\int_{0}^{T}\Gamma_sds\bigg\}\bigg)^{\frac{1}{4}}
\bigg(\dbE \exp\bigg\{4q\int_{0}^{T}
\big(C|U_s| + C\dbE_s[|U_{s+\delta_s}|]\big)
ds\bigg\}\bigg)^{\frac{1}{4}},
\end{align*}
where $\xi^{*} \triangleq \mathop{\sup}\limits_{t\in[T, T+K]}|\xi_{t}|$.
Moreover, from  item (iii) of \autoref{assumption3}, along with the fact that $U \in \mathcal{E}(0,T+K; \dbR^{n}),$ we have 
\begin{align*}
\dbE\exp\bigg\{q|\xi_{T}^{i}| + q\int_{0}^{T}\big(\Gamma_s + C|U_s| + C\dbE_s[|U_{s+\delta_s}|]\big) ds\bigg\}<+\i.
\end{align*}
By combining item (iii) of \autoref{assumption3} with the results from Briand--Hu \cite[Corollary 6]{Briand and Hu}, it can be inferred that   BSDE (\ref{eq:4.5}) admits a unique solution $(Y,Z)\in \mathcal{E}(0,T; \dbR^{n}) \times \mathcal{M}(0,T; \dbR^{n\times d}).$
Additionally, since for any $p > 1$,
\begin{align*}
\dbE\exp \Big\{p\mathop{\sup}\limits_{t\in[0, T+K]}|Y_{t}|\Big\} \leq \Big(\dbE\exp \Big\{2p\mathop{\sup}\limits_{t\in[0, T]}|Y_{t}|\Big\}\Big)^{\frac{1}{2}} \Big(\dbE\exp \big\{2p\xi^{*}\big\}\Big)^{\frac{1}{2}}
\end{align*}
and 
\begin{align*}
\dbE\Big(\int_0^{T+K}|Z_s|^2ds\Big)^{\frac{p}{2}}
\leq 2^{\frac{p}{2}} \bigg\{\dbE\Big(\int_0^{T}|Z_s|^2ds\Big)^{\frac{p}{2}}+\dbE\Big(\int_T^{T+K}|\eta_s|^2ds\Big)^{\frac{p}{2}}
\bigg\},
\end{align*}
it follows that BSDE (\ref{eq:4.5}) admits a unique solution $(Y,Z)\in \mathcal{E}(0,T+K; \dbR^{n}) \times \mathcal{M}(0,T+K; \dbR^{n\times d}).$

Finally,  we define $(Y^{0}_{t}, Z^{0}_{t}) = (0, 0)$ for $t\in [0,T]$ and $(Y^{0}_{t}, Z^{0}_{t}) = (\xi_{t}, \eta_{t})$ for  $t\in [T,T+K]$. We can then recursively define a sequence of processes $\{(Y^{m}, Z^{m})\}_{m=1}^{\i}$ in the space of 
$\mathcal{E}(0,T + K; \dbR^{n}) \times \mathcal{M}(0,T+K; \dbR^{n\times d})$ by the unique solution of the following  BSDEs: for each $i = 1, ...,n$,
\begin{equation}\left\{\begin{aligned}\label{eq:4.12}
		&Y_{t}^{m+1; i} = \xi^{i}_{T} + \int_{t}^{T}f^{i}(s, Y^{m}_s, Z^{m+1,i}_s, Y^{m}_{s+\delta_s})ds
		 - \int_{t}^{T}Z^{m+1;i}_s dW_s,  \quad   t\in [0,T]; \\
		&Y^{m+1;i}_t = \xi^{i}_t, \quad    Z^{m+1;i}_t = \eta^{i}_t, \quad    t\in[T, T+K],
	\end{aligned}\right.\end{equation}
where $Y^{m; i}$ and $Z^{m; i}$ represent the $i$-th component of
$Y^{m}$ and $Z^{m}$, respectively. 

\ms

\noindent{\it Step 2: The sequence  $\{Y^{m}\}_{m=0}^{\i}$ is exponential uniformly   bounded}

\ms

The aim of this step is to prove that for any $q > 1,$
\begin{align}\label{eq:4.13}
\mathop{\sup}\limits_{m\geq 0} \dbE\exp\Big\{q\gamma \mathop{\sup}\limits_{t\in [0,T+K]}|Y_{t}^{m}|\Big\} \leq C(q),
\end{align}
where 
\begin{align*}
C(q)\triangleq& \Big\{R\big(2([8nCT]+2)q\big)\Big\}^{\big([8nCT]+1\big)\big([8nCT]+2\big)} \cd \dbE\exp\Big\{16n(16n)^{[8nCT]}\big([8nCT]+2\big)^2q\gamma\xi^{*}\Big\}\nonumber\\
&\cdot\dbE\exp\bigg\{16n(32n)^{^{[8nCT]}}\big([8nCT]+2\big)^2q\gamma\int_{0}^{T}\Gamma_sds\bigg\}\vee  I(q) < +\i
\end{align*}
with
\begin{align}\label{eq:4.13(1)}
	R(q) \triangleq \Big(\frac{q}{q-1}\Big)^{2q}\q\hbox{and}\q 	I(q) \triangleq R(2q)\dbE\exp\Big\{8nq\gamma\xi^{*}+8nq\gamma\int_{0}^{T}\Gamma_sds\Big\}.
\end{align}
Note that $R(q)$ is greater than 1 and is decreasing.
From item (i) of \autoref{assumption3}, we have that for $i = 1, ..., n$ and $m\geq 0$,
\begin{align}\label{eq:4.14}
|f^{i}(s, Y^{m}_s, z, Y^{m}_{s+\delta_s})| \leq \Gamma_s + C|Y^{m}_s| + C\dbE_s[|Y^{m}_{s+\delta_s}|] + \frac{\gamma}{2}|z|^2, \quad s\in [t,T].
\end{align}
In view of the fact that both $Y^{m}$ and $Y^{m+1}$ belong to $\mathcal{E}(0,T; \dbR^{n})$. Then,   by using H\"{o}lder's inequality, we derive that for any $q>1$ and $m\geq 0$,
\begin{align}\label{eq:4.15}
	\dbE\exp\bigg\{q\mathop{\sup}\limits_{s\in [0,T]}|Y_{s}^{m+1;i}| + q\int_{0}^{T}
	\big(\Gamma_r + C|Y^{m}_r| + C\dbE_r[|Y^{m}_{r+\delta_r}|]\big)
	dr\bigg\}<+\i.
\end{align}
%
%
 Hence,  we deduce from  \autoref{lemma:4.1} that for any  $ s\in [t, T]$,
\begin{align*}
\exp\Big\{\gamma|Y_{s}^{m+1;i}|\Big\} \leq \dbE_{s}\exp\bigg\{\gamma|\xi^{i}_{T}|+\gamma\int_{t}^{T}
\big(\Gamma_r + C|Y^{m}_r| + C\dbE_r[|Y^{m}_{r+\delta_r}|]\big)
dr\bigg\}.
\end{align*}
Moreover, it follows from Jensen's inequality that for any  $ s\in [t, T]$,
\begin{align}\label{eqqq:5.16}
\exp\Big\{\gamma|Y_{s}^{m+1}|\Big\}=&\ \exp\Bigg\{ \gamma\sqrt{\sum_{i=1}^{n}|Y_{s}^{m+1;i}|^2}\Bigg\} \leq \exp\Bigg\{ \gamma\sqrt{(\sum_{i=1}^{n}|Y_{s}^{m+1;i}|)^2}\Bigg\}= \exp\Big\{\gamma\sum_{i=1}^{n}|Y_{s}^{m+1;i}|\Big\}\nonumber\\
\leq&\ \dbE_{s}\exp\bigg\{n\gamma|\xi_{T}|+n\gamma\int_{t}^{T}
\big(\Gamma_r + C|Y^{m}_r| + C\dbE_r[|Y^{m}_{r+\delta_r}|]\big)
dr\bigg\}.
\end{align}
Note that   the last term of the above inequality is a martingale, then by using Doob's maximal inequality, H\"{o}lder's inequality, and Jensen's inequality, we obtain that
\begin{align*}
&\ \dbE\Big[\exp\Big\{q\gamma\mathop{\sup}\limits_{s\in [t,T]}|Y_{s}^{m+1}|\Big\}\Big] 
= \dbE\Big[\mathop{\sup}\limits_{s\in [t,T]}\exp\Big\{q\gamma|Y_{s}^{m+1}|\Big\}\Big]\nonumber\\
&\leq \dbE\bigg\{\mathop{\sup}\limits_{s\in [t,T]}\bigg(\dbE_{s}\exp\Big\{n\gamma|\xi_{T}|+n\gamma\int_{t}^{T}
\big(\Gamma_r + C|Y^{m}_r| + C\dbE_r[|Y^{m}_{r+\delta_r}|]\big) 
dr\Big\}\bigg)^q\bigg\}\nonumber\\
&\leq \Big(\frac{q}{q-1}\Big)^{q}\dbE\exp\Big\{nq\gamma|\xi_{T}|+nq\gamma\int_{t}^{T}
\big(\Gamma_r + C|Y^{m}_r| + C\dbE_r[|Y^{m}_{r+\delta_r}|]\big)
dr\Big\}\nonumber\\
&\leq \Big(\frac{q}{q-1}\Big)^{q}\bigg(\dbE\exp\Big\{2nq\gamma|\xi_{T}|+2nq\gamma\int_{0}^{T}\Gamma_sds\Big\}\bigg)^{\frac{1}{2}} \nonumber\\
&\q\ \cdot\bigg(\dbE\exp\Big\{2nq\gamma C\int_{t}^{T}|Y_{s}^{m}|ds + 2nq\gamma C\int_{t}^{T}\dbE_s[|Y^{m}_{s+\delta_s}|]ds\Big\}\bigg)^{\frac{1}{2}}\nonumber\\
&\leq \Big(\frac{q}{q-1}\Big)^{q}\bigg(\dbE\exp\Big\{2nq\gamma|\xi_{T}|+2nq\gamma\int_{0}^{T}\Gamma_sds\Big\}\bigg)^{\frac{1}{2}} \nonumber\\
&\q\ \cdot\bigg(\dbE\Big[\exp\Big\{4nq\gamma C\int_{t}^{T}|Y_{s}^{m}|ds\Big\}\bigg)^{\frac{1}{4}} \bigg(\dbE\exp\Big\{4nq\gamma C\int_{t}^{T}\dbE_s[|Y^{m}_{s+\delta_s}|]ds\Big\}\bigg)^{\frac{1}{4}} \nonumber\\
&\leq \Big(\frac{q}{q-1}\Big)^{q}\bigg(\dbE\exp\Big\{2nq\gamma|\xi_{T}|+2nq\gamma\int_{0}^{T}\Gamma_sds\Big\}\bigg)^{\frac{1}{2}} \nonumber\\
&\q\ \cdot\bigg(\dbE\exp\Big\{4nq\gamma C\int_{t}^{T}|Y_{s}^{m}|ds\Big\}\bigg)^{\frac{1}{4}}  \bigg(\int_{t}^{T}\dbE\dbE_s\exp\Big\{4nq\gamma C[|Y^{m}_{s+\delta_s}|]ds\Big\}\bigg)^{\frac{1}{4}} \nonumber\\
&\leq\Big(\frac{q}{q-1}\Big)^{q}\bigg(\dbE\exp\Big\{2nq\gamma\xi^{*}+2nq\gamma\int_{0}^{T}\Gamma_sds\Big\}\bigg)^{\frac{1}{2}}\bigg(\dbE\exp\Big\{4nq\gamma C\mathop{\sup}\limits_{s\in [t,T + K]}|Y_{s}^{m}|(T-t)\Big\}\bigg)^{\frac{1}{2}}.
\end{align*}
Thus 
\begin{equation}\label{eq:4.18}
\begin{aligned}
&\dbE\exp\Big\{q\gamma\mathop{\sup}\limits_{s\in [t,T+K]}|Y_{s}^{m+1}|\Big\}
\leq  \dbE\exp\Big\{q\gamma\mathop{\sup}\limits_{s\in [t,T]}|Y_{s}^{m+1}|\Big\} \cdot\exp\big\{q\gamma \xi^{*}\big\}\\
&\leq \dbE\exp\Big\{2q\gamma\mathop{\sup}\limits_{s\in [t,T]}|Y_{s}^{m+1}|\Big\}\cdot\Big(\dbE\exp\big\{2q\gamma\xi^{*}\big\}\Big)^{\frac{1}{2}}\\
& \leq \sqrt{I(q)}\bigg(\dbE\exp\Big\{8nq\gamma C\mathop{\sup}\limits_{s\in [t,T + K]}|Y_{s}^{m}|(T-t)\Big\}\bigg)^{\frac{1}{2}}.
\end{aligned}
\end{equation}
For the case of $C=0,$ it is obvious from (\ref{eq:4.18})  one has that 
\begin{equation}\label{eq:4.19}
\mathop{\sup}\limits_{m\geq 0}\dbE\exp\Big\{q\gamma\mathop{\sup}\limits_{s\in [0,T + K]}|Y_{s}^{m+1}|\Big\} \leq \sqrt{I(q)}.
\end{equation}
For the case of $C\neq 0$, we denote $\e \triangleq \frac{1}{8nC}$, and  let   $m_0$ be the unique positive integer such that
\begin{equation*}
8nCT \leq m_0 < 8nCT + 1.
\end{equation*}
If $m_0 = 1$, then  we have that $8nq\gamma C(T-t) \leq 8nq\gamma CT \leq q\gamma$.  It follows from (\ref{eq:4.18}) that 
\begin{equation*}
\dbE\exp\Big\{q\gamma\mathop{\sup}\limits_{s\in [t,T+K]}|Y_{s}^{m+1}|\Big\} \leq \sqrt{I(q)} \bigg(\dbE\exp\Big\{q\gamma \mathop{\sup}\limits_{s\in [t,T+K]}|Y_{s}^{m}|\Big\}\bigg)^{\frac{1}{2}}.
\end{equation*}
By induction, we deduce that 
\begin{equation}\label{5.11}
\dbE\exp\Big\{q\gamma\mathop{\sup}\limits_{s\in [t,T+K]}|Y_{s}^{m+1}|\Big\} \leq \sqrt{I(q)}^{1+\frac{1}{2}+\cdot\cdot\cdot +\frac{1}{2^{m}}} \bigg(\dbE\exp\Big\{q\gamma \mathop{\sup}\limits_{s\in [t,T+K]}|Y_{s}^{0}|\Big\}\bigg)^{\frac{1}{2^{m+1}}}\leq I(q).
\end{equation}
Then,  by H\"{o}lder's inequality, we have that 
\begin{equation}\label{eq:4.23}
	\mathop{\sup}\limits_{m\geq 0}\dbE\exp\Big\{q\gamma\mathop{\sup}\limits_{s\in [0,T+K]}|Y_{s}^{m}|\Big\} \leq R(2q)\dbE\exp\big\{16nq\gamma\xi^{*}\big\}\cdot \dbE\exp\bigg\{16nq\gamma\int_{0}^{T}\Gamma_sds\bigg\}.
\end{equation}
If $m_{0}\neq 1$, then  $8nq\gamma C(T-t) \leq 8nq\gamma C\e = q\gamma$ when $t\in [T -\e, T]$.  It follows from the similar deduction of (\ref{eq:4.23}) that for all $q>1$,
\begin{align}\label{eqq:5.14}
	\mathop{\sup}\limits_{m\geq 0}\dbE\exp\Big\{q\gamma\mathop{\sup}\limits_{s\in [T-\e,T+K]}|Y_{s}^{m}|\Big\} \leq R(2q)\dbE\exp\big\{16nq\gamma\xi^{*}\big\}\cdot \dbE\exp\bigg\{16nq\gamma\int_{0}^{T}\Gamma_sds\bigg\},
\end{align}
which leads to 
\begin{equation}\label{eq:4.25}
\mathop{\sup}\limits_{m\geq 0}\dbE\exp\Big\{q\gamma|Y_{T-\e}^{m}|\Big\} \leq R(2q)\dbE\exp\big\{16nq\gamma\xi^{*}\big\}\cdot \dbE\exp\bigg\{16nq\gamma\int_{0}^{T}\Gamma_sds\bigg\}.
\end{equation}
Next, in order to obtain the estimate when  $t\in[0,T-\e]$, we consider the following BSDEs: for $i = 1, ..., n,$
\begin{equation}\left\{\begin{aligned}\label{BSDE5.15}
		&\tilde{Y}_{t}^{m+1; i} = Y^{m+1;i}_{T-\e} + \int_{t}^{T-\e}f^{i}(s, Y^{m}_s, \tilde{Z}^{m+1,i}_s, Y^{m}_{s+\delta_s})ds
		- \int_{t}^{T-\e}\tilde{Z}^{m+1;i}_s dW_s; \quad  t\in [0,T-\e],\\
		&\tilde{Y}_{t}^{m+1; i} = Y_{t}^{m+1; i}, \quad \quad\quad \tilde{Z}_{t}^{m+1; i} = Z_{t}^{m+1; i}, \quad       t\in[T - \e, T-\e+K].
	\end{aligned}\right.\end{equation}
Then, similarly to the deduction of (\ref{eq:4.18}),  and noting that the pair $(Y^{m+1; i}, Z^{m+1; i})$ is a solution of BSDE (\ref{BSDE5.15}), we obtain that for all $t\in [0,T-\e],$
\begin{equation*}
	\begin{aligned}
		\dbE\exp\Big\{q\gamma\mathop{\sup}\limits_{s\in [t,T-\e+K]}|Y_{s}^{m+1}|\Big\} 
		\leq \sqrt{I(q)} \bigg(\dbE\exp\Big\{8nq\gamma C\mathop{\sup}\limits_{s\in [t,T-\e+K]}|Y_{s}^{m}|(T-\e-t)\Big\}\bigg)^{\frac{1}{2}}.
	\end{aligned}
\end{equation*}
Then, similarly to the deduction of (\ref{eqq:5.14}), we have
\begin{equation*}
\begin{aligned}
& \mathop{\sup}\limits_{m\geq 0}\dbE\exp\Big\{q\gamma\mathop{\sup}\limits_{s\in [T-2\e,T-\e+K]}|Y_{s}^{m}|\Big\} \\
&\leq R(2q)\dbE\exp\Big\{16nq\gamma|Y_{T-\e}^{m}|\Big\} \cdot\dbE\exp\bigg\{16nq\gamma\int_{0}^{T-\e}\Gamma_sds\bigg\}\\
&\leq R(2q)R(32nq)\dbE\exp\big\{16^2n^{2}q\gamma\xi^{*}\big\} \cdot\bigg(\dbE\exp\Big\{16^2n^{2}q\gamma\int_{0}^{T}\Gamma_sds\Big\}\bigg)^2\\
&\leq R(2q)R(32nq)\dbE\exp\big\{16^2n^{2}q\gamma\xi^{*}\big\}\cdot \dbE\exp\bigg\{16n\times32 nq\gamma\int_{0}^{T}\Gamma_sds\bigg\}.
\end{aligned}
\end{equation*}
By induction, for each  $j = 1, ...,m_{0}-1$,
\begin{align}\label{eq::5.16}
&\mathop{\sup}\limits_{m\geq 0}\dbE\exp\Big\{q\gamma\mathop{\sup}\limits_{s\in [T-j\e,T -(j-1)\e+ K]}|Y_{s}^{m}|\Big\}\nonumber\\ 
&\leq R(2q)\prod_{1\leq i \leq j-1} R(2\times16^{i}n^iq)\dbE\exp\Big\{16n(16n)^{^{j-1}}q\gamma\xi^{*}\Big\}\cdot\dbE\exp\bigg\{16n(32n)^{^{j-1}}q\gamma\int_{0}^{T}\Gamma_sds\bigg\}\nonumber\\
&\leq \Big(R(2q)\Big)^{m_0 }\dbE\exp\Big\{16n(16n)^{^{m_0-1}}q\gamma\xi^{*}\Big\}\cdot\dbE\exp\bigg\{16n(32n)^{^{m_0-1}}q\gamma\int_{0}^{T}\Gamma_sds\bigg\}.
\end{align}
Moreover, we consider the following  BSDEs: for $i = 1, ..., n,$
\begin{equation}\left\{\begin{aligned}\label{BSDE 5.16}
		&\tilde{Y}_{t}^{m+1; i} = Y^{m+1;i}_{T-(m_{0}-1)\e} + \int_{t}^{T-(m_{0}-1)\e}f^{i}(s, Y^{m}_s, \tilde{Z}^{m+1,i}_s, Y^{m}_{s+\delta_s})ds
		- \int_{t}^{T-(m_{0}-1)\e}\tilde{Z}^{m+1;i}_s dW_s; \quad \quad \\
		&\q\q\q\q\q\q\q\q\q\q\q\q\q\q\q\q\q\q\q\q\q\q\q\q\q\q\q\q\q\q\q\q t\in [0,T-(m_{0}-1)\e],\\
		&\tilde{Y}_{t}^{m+1; i} = Y_{t}^{m+1; i}, \quad \tilde{Z}_{t}^{m+1; i} = Z_{t}^{m+1; i}, \quad      t\in[ T-(m_{0}-1)\e, T-(m_{0}-1)\e+K].
	\end{aligned}\right.\end{equation}
Since
$
8nCT\leq m_{0} < 8nCT + 1, 
$
we have
\begin{equation*}
	8nC\big[T-(m_{0}-1)\e\big]\leq 1 < 8nC\big[T-(m_{0}-1)\e\big] + 1.
\end{equation*}
Then,  when $t\in [0, T-(m_{0}-1)\e]$, we have $8nq\gamma C\big(T-(m_{0}-1)\e-t\big) \leq 8nq\gamma C\big(T-(m_{0}-1)\e\big) \leq q\gamma.$ Thus, it follows from the similar deduction of (\ref{eq:4.23})  that 
\begin{align}\label{eq::5.17}
	&\mathop{\sup}\limits_{m\geq 0}\dbE\exp\Big\{q\gamma\mathop{\sup}\limits_{s\in [0,T-(m_{0}-1)\e+K]}|Y_{s}^{m}|\Big\} \nonumber\\
	&\leq
	R(2q)\dbE\exp\big\{16nq\gamma|\xi_{T}|\big\} \cdot\dbE\exp\bigg\{16nq\gamma\int_{0}^{T}\Gamma_sds\bigg\}\nonumber\\
	&\leq \Big\{R(2q)\Big\}^{m_0 } \dbE\exp\Big\{16n(16n)^{^{m_0-1}}q\gamma\xi^{*}\Big\}\cdot\dbE\exp\bigg\{16n(32n)^{^{m_0-1}}q\gamma\int_{0}^{T}\Gamma_sds\bigg\}.
\end{align}
Therefore, by H\"{o}lder's inequality, (\ref{eq::5.16}), and (\ref{eq::5.17}), we have
\begin{align*}
&\mathop{\sup}\limits_{m\geq 0}\dbE\exp\Big\{q\gamma\mathop{\sup}\limits_{s\in [0,T+K]}|Y_{s}^{m}|\Big\} \nonumber\\
&\leq \dbE\exp\big\{(m_0 + 1)q\gamma\xi^{*}\big\} \cdot \mathop{\sup}\limits_{m\geq 0}\Bigg\{\dbE\exp\Big\{(m_0 + 1)q\gamma\mathop{\sup}\limits_{s\in [0,T-(m_{0}-1)\e]}|Y_{s}^{m}|\Big\}\nonumber\\
&\q\ \cd \prod_{j=1}^{m_{0}-1}\dbE\exp\Big\{(m_0+1)q\gamma\mathop{\sup}\limits_{s\in [T-j\e,T-(j-1)\e]}|Y_{s}^{m}|\Big\}\Bigg\}\nonumber\\
&\leq \Big(R\big(2(m_0+1)q\big)\Big)^{m_0(m_0+1)} \dbE\exp\Big\{16n(16n)^{^{m_0-1}}(m_0+1)^2q\gamma\xi^{*}\Big\}\nonumber\\
&\q\ \cdot\dbE\exp\bigg\{16n(32n)^{^{m_0-1}}(m_0+1)^2q\gamma\int_{0}^{T}\Gamma_sds\bigg\}.
\end{align*}
This together with (\ref{eq:4.19}) and (\ref{5.11}) yields the estimate (\ref{eq:4.13}).

\ms

\noindent\emph{Step 3: The sequence $\{Z^{m}\}_{m=0}^{\i}$ is uniformly bounded in $\mathcal{H}^q_{\dbF}(0,T + K;\dbR^{n\times d})$}

\ms

 In this step we  prove  that for any $q > 1,$
\begin{equation}\label{eq:4.38}
\mathop{\sup}\limits_{m\geq 0}\dbE\Big[\Big(\int_{0}^{T+K}|Z_{s}^{m}|^2ds\Big)^{\frac{q}{2}}\Big] < +\i.
\end{equation}
Actually, by applying It\^{o}-Tanaka's formula to the term $\exp\{2\gamma|Y_{t}^{m+1;i}|\}$ firstly and then by using inequality (\ref{eq:4.14}), we obtain that for $i = 1, ..., n$ and $m\geq 0,$
\begin{equation*}
	\begin{aligned}
		&\exp\{2\gamma |Y_{0}^{m+1;i}|\} + \gamma^2 \int_{0}^{T}\exp\{2\gamma |Y_{s}^{m+1;i}|\}|Z_{s}^{m+1;i}|^2 ds\\
		&\leq \exp\{2\gamma |\xi_{T}^i|\} + 2\gamma\int_{0}^{T} \exp\{2\gamma |Y_{s}^{m+1;i}|\}(\Gamma_s + C|Y_{s}^{m}| + C\dbE_{s}[|Y_{s+\delta_s}^{m}|])ds\\
		&\quad-2\gamma\int_{0}^{T}\exp\{2\gamma |Y_{s}^{m+1;i}|\}\sgn(Y_{s}^{m+1;i})Z_{s}^{m+1;i}dW_s.
	\end{aligned}
\end{equation*}
Then, noting the inequality $(x + y)^{{\frac{q}{2}}} \leq 2^{\frac{q}{2}}(x^{\frac{q}{2}}+y^{\frac{q}{2}})$ for $ x, y \geq 0$, we have 
\begin{align*}
	&\ \gamma^q\Big(\int_{0}^{T}|Z_{s}^{m+1;i}|^2 ds\Big)^{\frac{q}{2}}\\
	&\leq 2^{\frac{q}{2}} \Bigg\{\Big(\exp\{2\gamma \xi^{*}\} + 2\gamma\int_{0}^{T} \exp\{2\gamma |Y_{s}^{m+1;i}|\}(\Gamma_s + C|Y_{s}^{m}| + C\dbE_{s}[|Y_{s+\delta_s}^{m}|])ds\Big)^{\frac{q}{2}}\\
	&\q+2^{\frac{q}{2}}\gamma^{\frac{q}{2}}\mathop{\sup}\limits_{t\in[0,T]}\Big|\int_{0}^{t}\exp\{2\gamma |Y_{s}^{m+1;i}|\}\sgn(Y_{s}^{m+1;i})Z_{s}^{m+1;i}dW_s\Big|^{\frac{q}{2}}\Bigg\}.
\end{align*}
By taking the expectation on both sides of the above inequality, we have
\begin{align*}
	&\ \dbE\Big[\Big(\int_{0}^{T}|Z_{s}^{m+1;i}|^2 ds\Big)^{\frac{q}{2}}\Big]\\
	&\leq 2^{\frac{q}{2}}\gamma^{-q}\mathop{\sup}\limits_{m\geq0}\dbE\Big[\Big(\exp\{2\gamma \xi^{*}\} + 2\gamma\int_{0}^{T} \exp\{2\gamma |Y_{s}^{m+1;i}|\}(\Gamma_s + C|Y_{s}^{m}| + C\dbE_{s}[|Y_{s+\delta_s}^{m}|])ds\Big)^{\frac{q}{2}}\Big]\\
	&\q\ +2^{q}\gamma^{-\frac{q}{2}}\dbE\Big[\mathop{\sup}\limits_{t\in[0,T]}\Big|\int_{0}^{t}\exp\{2\gamma |Y_{s}^{m+1;i}|\}\sgn(Y_{s}^{m+1;i})Z_{s}^{m+1;i}dW_s\Big|^{\frac{q}{2}}\Big].
\end{align*}
For the last term, by Burkholder-Davis-Gundy's inequality, H\"{o}lder's inequality, and Jensen's inequality,  there exists a positive constant $M(q)$ such that
\begin{align*}
	& 2^{q}\gamma^{-\frac{q}{2}}\dbE\Big[\mathop{\sup}\limits_{t\in[0,T]}\Big|\int_{0}^{t}\exp\{2\gamma |Y_{s}^{m+1;i}|\}\sgn(Y_{s}^{m+1;i})Z_{s}^{m+1;i}dW_s\Big|^{\frac{q}{2}}\Big]\\
	&\leq 2^{q}\gamma^{-\frac{q}{2}}M(q)\dbE\Big[\Big|\int_{0}^{T}\exp\{4\gamma |Y_{s}^{m+1}|\}|Z_{s}^{m+1;i}|^2ds\Big|^{\frac{q}{4}}\Big]\\
    &\leq 2^{q}\gamma^{-\frac{q}{2}}M(q) \dbE\Big[\mathop{\sup}\limits_{t\in[0,T]}\exp\{q\gamma |Y_{t}^{m+1}|\}\Big|\int_{0}^{T}|Z_{s}^{m+1;i}|^2ds\Big|^{\frac{q}{4}}\Big]\\
    &\leq 2^{q}\gamma^{-\frac{q}{2}}M(q) \dbE\Big[\mathop{\sup}\limits_{t\in[0,T]}\exp\{2q\gamma |Y_{t}^{m+1}|\}\Big]\sqrt{\dbE\Big[\Big(\int_{0}^{T}|Z_{s}^{m+1;i}|^2ds\Big)^{\frac{q}{2}}\Big]}.
\end{align*}
Therefore
\begin{equation}\label{eq:4.43}
	\begin{aligned}
	\dbE\Big[\Big(\int_{0}^{T}|Z_{s}^{m+1;i}|^2 ds\Big)^{\frac{q}{2}}\Big] \leq A(q) + B(q)\sqrt{\dbE\Big[\Big(\int_{0}^{T}|Z_{s}^{m+1;i}|^2ds\Big)^{\frac{q}{2}}\Big]},
	\end{aligned}
\end{equation}
where 
\begin{equation*}
\begin{aligned}
A(q) = 2^{\frac{q}{2}}\gamma^{-q}& \mathop{\sup}\limits_{m\geq0} \dbE\Big[\Big(\exp\{2\gamma \xi^{*}\} + 2\gamma\int_{0}^{T} \exp\{2\gamma |Y_{s}^{m+1;i}|\}\\
&\cdot(\Gamma_s + C|Y_{s}^{m}| + C\dbE_{s}[|Y_{s+\delta_s}^{m}|])ds\Big)^{\frac{q}{2}}\Big]<+\i
\end{aligned}
\end{equation*}
and 
\begin{equation*}
B(q) = 2^{q}\gamma^{-\frac{q}{2}}M(q)\mathop{\sup}\limits_{m\geq0} \dbE\Big[\mathop{\sup}\limits_{t\in[0,T]}\exp\{2q\gamma |Y_{t}^{m+1}|\}\Big]<+\i.
\end{equation*}
Next, on one hand, it is easy to check that the solution of the following inequality  
\begin{equation}\label{241220}
x^2 \leq A(q) + B(q)x,\q x\ges 0, 
\end{equation}
satisfies that 
%
$$ 0 \leq x \leq \frac{B(q) + \sqrt{|B(q)|^2 + 4A(q)}}{2}.$$
On the other hand, 
from (\ref{eq:4.43}),   it is easy to see that the following term  $$\sqrt{\dbE\Big[\Big(\int_{0}^{T}|Z_{s}^{m+1;i}|^2ds\Big)^{\frac{q}{2}}\Big]}$$ is a solution of the inequality \rf{241220}. 
Thus, we  obtain that for all $m\geq 0$,
$$\dbE\Big[\Big(\int_{0}^{T}|Z_{s}^{m}|^2ds\Big)^{\frac{q}{2}}\Big] \leq n^{\frac{q}{2}} \bigg\{\frac{B(q) + \sqrt{|B(q)|^2 + 4A(q)}}{2}\bigg\}^2<+\i.$$
Hence for $q>1$, we have
\begin{equation*}
	\mathop{\sup}\limits_{m\geq 0}\dbE\Big[\Big(\int_0^{T+K}|Z_{s}^{m}|^2ds\Big)^{\frac{q}{2}}\Big]\leq 2^{\frac{q}{2}}\mathop{\sup}\limits_{m\geq 0} \Bigg\{\dbE\Big[\Big(\int_0^{T}|Z_{s}^{m}|^2ds\Big)^{\frac{q}{2}}\Big]+\dbE\Big[\Big(\int_T^{T+K}|\eta_s|^2ds\Big)^{\frac{q}{2}}\Big]\Bigg\}<+\i,
\end{equation*}
which implies that \rf{eq:4.38} holds.

\ms

\noindent{\it Step 4:  Estimation of the term $\Delta_{\theta}Y^{m, p}$}

\ms

In what follows, without loss of generality, we suppose that the generator $f$ is componentwise convex. In other words,  for any $i = 1, ..., n$ and $t\in [0, T]$, the component $f^{i}( t, y, \cdot, \phi_r)$ is convex  for any $y\in \dbR^n$ and $\phi_r \in L^2_{\sF_r}(\Om; \dbR^n)$.  Moreover, for simplicity of presentation, we denote that
\begin{equation}\label{eqqq::5.43}
\Delta_{\theta}Y^{m, p} \triangleq \frac{Y^{m+p}-\theta Y^{m}}{1-\theta}
\quad \hb{and} \quad \Delta_{\theta}Z^{m, p} \triangleq \frac{Z^{m+p}-\theta Z^{m}}{1-\theta}, \q m\ges 1, p \geq 1, \theta \in (0, 1).
\end{equation}
The aim of this step is to give an exponential estimate of the term $\Delta_{\theta}Y^{m, p}.$

We observe that the pair  $(\Delta_{\theta}Y^{m, p}, \Delta_{\theta}Z^{m, p})$ belongs to  $\mathcal{E}(0,T+K; \dbR^{n}) \times \mathcal{M}(0,T+K; \dbR^{n\times d})$ and satisfies the following  BSDEs: for $i = 1, ..., n,$ 
\begin{equation}\left\{\begin{aligned}\label{eq:4.42}
	&\Delta_{\theta}Y_{t}^{m, p; i} = \xi^{i}_{T} + \int_{t}^{T}\Delta_{\theta}f^{m,p;i}(s, \Delta_{\theta}Z_{s}^{m,p;i})ds - \int_{t}^{T}\Delta_{\theta}Z_{s}^{m,p;i}dW_{s}, \quad t\in[0,T],\\
	&\Delta_{\theta}Y_{t}^{m, p; i} = \xi_{t}^{i}, \quad \Delta_{\theta}Z_{t}^{m, p; i} = \eta_{t}^{i}, \quad t\in [T, T+K],
	\end{aligned}\right.\end{equation}
where 
\begin{equation*}
\Delta_{\theta}f^{m,p;i}(s, z) \triangleq \frac{1}{1-\theta} \Big\{f^{i}\big(s, Y^{m+p-1}_{s}, (1-\theta)z + \theta Z_{s}^{m;i}, Y^{m+p-1}_{s+\delta_s}\big) - \theta f^{i}\big(s, Y^{m-1}_{s}, Z_{s}^{m;i}, Y^{m-1}_{s+\delta_s}\big)\Big\}.
\end{equation*}
By  \autoref{assumption3}, we have that  for all $s\in[t,T]$ and $z\in \dbR^{d},$
\begin{equation*}
\begin{aligned}
\Delta_{\theta}f^{m,p;i}(s, z)
&\leq \Big|f^{i}(s, Y^{m+p-1}_{s}, z , Y^{m+p-1}_{s+\delta_s})\Big| \\
&\q\  + \frac{\theta}{1-\theta}\Big|f^{i}(s, Y^{m+p-1}_{s}, Z_{s}^{m;i} , Y^{m+p-1}_{s+\delta_s}) 
- f^{i}(s, Y^{m-1}_{s}, Z_{s}^{m;i} , Y^{m-1}_{s+\delta_s})\Big|\\
&\leq \Gamma_s + C\Big\{ |Y_{s}^{m+p-1}| +\dbE_{s}|Y_{s+\delta_s}^{m+p-1}|+ |Y_{s}^{m-1}| +\dbE_{s} |Y_{s+\delta_s}^{m-1}| \Big\} \\
&\q\ + C\Big\{ |\Delta_{\theta}Y^{m-1,p}_{s}| + \dbE_{s}|\Delta_{\theta}Y^{m-1,p}_{s+\delta_s}|\Big\} + \frac{\gamma}{2}|z|^2.                                        
\end{aligned}
\end{equation*}
Then, from  (\ref{eq:4.13}) and Jensen's inequality, it is easy to check that 
\begin{align}
&\dbE\exp \bigg\{2\gamma\mathop{\sup}\limits_{s\in [t,T]}(\Delta_{\theta}Y^{m, p;i}_s)^{+} +2\gamma\int_{t}^{T} \Big[ \Gamma_r + C\big\{|Y_{r}^{m+p-1}| +\dbE_{r} |Y_{r+\delta_r}^{m+p-1}| + |Y_{r}^{m-1}| +\dbE_{r}|Y_{r+\delta_r}^{m-1}|\big\} \nonumber \\ 
&+ C\big\{|\Delta_{\theta}Y^{m-1,p}_{r}| + \dbE_{r}|\Delta_{\theta}Y^{m-1,p}_{r+\delta_r}|\big\}\Big] dr\bigg\}
<\i,\q i = 1, ..., n, \q t\in[0,T]. \label{eq:4.46}
\end{align}
%
%
Thus,  all conditions in \autoref{lemma:4.1} are satisfied by BSDE (\ref{eq:4.42}),  from which it follows   that for each $i = 1, ..., n$,
\begin{equation}\label{eqqq:5.48}
\begin{aligned}
&\ \exp\Big\{\gamma(\Delta_{\theta}Y^{m, p;i}_s)^{+}\Big\}\\
&\leq \dbE_{s}\exp\bigg\{\gamma(\xi_{T})^{+} + \gamma \int_{t}^{T}
\Big[\Gamma_r + C\big(|Y_{r}^{m+p-1}| +\dbE_{r}|Y_{r+\delta_r}^{m+p-1}| +|Y^{m-1,p}_{r}| + \dbE_{r}|Y^{m-1,p}_{r+\delta_r}|\big)\\
&\q+ C\big(|\Delta_{\theta}Y^{m-1,p}_{r}| + \dbE_{r} |\Delta_{\theta}Y^{m-1,p}_{r+\delta_r}|\big)\Big]
dr\bigg\}, \q s\in [t,T].																	
\end{aligned}
\end{equation}
For simplicity, next we denote 
\begin{equation*}
	\Delta_{\theta}\bar{Y}^{m, p} \triangleq \frac{Y^{m}-\theta Y^{m+p}}{1-\theta}
	\quad \hb{and} \quad \Delta_{\theta}\bar{Z}^{m, p} \triangleq \frac{Z^{m}-\theta Z^{m+p}}{1-\theta}.
\end{equation*}
Then, similarly to the deduction of (\ref{eqqq:5.48}), we have that for all $i = 1, ..., n$,
\begin{equation}\label{eqq::5.50}
	\begin{aligned}
		&\ \exp\Big\{\gamma\big(|\Delta_{\theta}\bar{Y}^{m, p;i}_s\big)^{+}\Big\}\\
		&\leq \dbE_{s}\exp\bigg\{\gamma(\xi_{T})^{+} + \gamma \int_{t}^{T}
	\Big[	\Gamma_r + C\big(|Y_{r}^{m+p-1}| +\dbE_{r}|Y_{r+\delta_r}^{m+p-1}| +|Y^{m-1,p}_{r}| + \dbE_{r}|Y^{m-1,p}_{r+\delta_r}|\big)\\
		&\q+ C\big(|\Delta_{\theta}\bar{Y}^{m-1,p}_{r}| + \dbE_{r}|\Delta_{\theta}\bar{Y}^{m-1,p}_{r+\delta_r}|\big)\Big]
		dr\bigg\}, \q s\in [t, T].										
\end{aligned}
\end{equation}
By using the inequality  $(a + b)^{-} \leq a^{-} + b^{-}$  and $(-a)^{+} = a^{-}$ for   $a, b \in \dbR$, we deduce that
\begin{equation}\label{eqq::5.51}
(\Delta_{\theta}Y^{m, p;i}_s)^{-} \leq (\Delta_{\theta}\bar{Y}^{m, p;i}_s)^{+} + 2|Y_{s}^{m+p}| \q\hb{and}\q (\Delta_{\theta}\bar{Y}^{m, p;i}_t)^{-} \leq  (\Delta_{\theta}Y^{m, p;i}_s)^{+} + 2|Y_{s}^{m}|.
\end{equation}
Now, by combining the above inequalities and by using Jensen's inequality, we have for each $i = 1, ..., n$ and  for any $0\les t\les s\les T$,
\begin{equation*}
	\begin{aligned}
		&\ \exp\Big\{\gamma|\Delta_{\theta}Y^{m, p;i}_s|\Big\}=\exp\Big\{\gamma(\Delta_{\theta}Y^{m, p;i}_s)^{+}\Big\}\cdot\exp\Big\{\gamma(\Delta_{\theta}Y^{m, p;i}_s)^{-}\Big\}\\
		&\leq \dbE_{s}\exp\bigg\{2\gamma \xi^{*} 	+ 2\gamma|Y_{s}^{m+p}| + 2\gamma 
		\int_{t}^{T}
	\Big[	\Gamma_r + C\big(|Y_{r}^{m+p-1}| +\dbE_{r}|Y_{r+\delta_r}^{m+p-1}| +|Y^{m-1,p}_{r}|\\
		&\quad  + \dbE_{r}|Y^{m-1,p}_{r+\delta_r}|\big)+ C\big(|\Delta_{\theta}Y^{m-1,p}_{r}| + \dbE_{r}|\Delta_{\theta}Y^{m-1,p}_{r+\delta_r}|+|\Delta_{\theta}\bar{Y}^{m-1,p}_{r}| + \dbE_{r}|\Delta_{\theta}\bar{Y}^{m-1,p}_{r+\delta_r}|\big)\Big] 
		dr\bigg\}									
	\end{aligned}
\end{equation*}
and
\begin{equation}\label{eq:::5.53}
	\begin{aligned}
		&\exp\Big\{\gamma|\Delta_{\theta}\bar{Y}^{m, p;i}_s|\Big\}=\exp\Big\{\gamma(\Delta_{\theta}\bar{Y}^{m, p;i}_s)^{+}\Big\}\cdot\exp\Big\{\gamma(\Delta_{\theta}\bar{Y}^{m, p;i}_s)^{-}\Big\}\\
		&\leq \dbE_{s}\exp\bigg\{2\gamma \xi^{*} + 2\gamma|Y_{s}^{m}| 
+2\gamma	\int_{t}^{T}
\Big[	\Gamma_r + C\big(|Y_{r}^{m+p-1}| +\dbE_{r}|Y_{r+\delta_r}^{m+p-1}| +|Y^{m-1,p}_{r}|\\
&\quad  + \dbE_{r}|Y^{m-1,p}_{r+\delta_r}|\big)+ C\big(|\Delta_{\theta}Y^{m-1,p}_{r}| + \dbE_{r}|\Delta_{\theta}Y^{m-1,p}_{r+\delta_r}|+|\Delta_{\theta}\bar{Y}^{m-1,p}_{r}| + \dbE_{r}|\Delta_{\theta}\bar{Y}^{m-1,p}_{r+\delta_r}|\big)\Big] 
dr\bigg\}											.								
	\end{aligned}
\end{equation}
Consequently, it follows from Jensen's inequality again that   for any $0\les t\les s\les T$,
\begin{equation}\label{eqq::5.54}
	\begin{aligned}
		&\ \exp\Big\{\gamma\Big(|\Delta_{\theta}Y^{m, p}_s|+|\Delta_{\theta}\bar{Y}^{m, p}_s|\Big)\Big\}\\
		&\leq \dbE_{s}\mathop{\sup}\limits_{\bar{r}\in [t, T]}\exp\Big\{4n\gamma \big(\xi^{*} + |Y_{\bar{r}}^{m}| + |Y_{\bar{r}}^{m+p}|\big)\Big\} + \exp \bigg\{4n\gamma\int_{t}^{T}
	\Big[	\Gamma_r + C\(|Y_{r}^{m+p-1}|+\dbE_{r}|Y_{r+\delta_r}^{m+p-1}| \\
		&\q   +|Y^{m-1,p}_{r}|+ \dbE_{r}|Y^{m-1,p}_{r+\delta_r}|+|\Delta_{\theta}Y^{m-1,p}_{r}| + \dbE_{r}|\Delta_{\theta}Y^{m-1,p}_{r+\delta_r}|+|\Delta_{\theta}\bar{Y}^{m-1,p}_{r}|+\dbE_{r}|\Delta_{\theta}\bar{Y}^{m-1,p}_{r+\delta_r}|\)\Big] dr\bigg\}. \\									
\end{aligned}
\end{equation}
%
Finally, by comparing (\ref{eqqq:5.16}) and (\ref{eqq::5.54}), and by using   a similar deduction to the previous one, we conclude that there exists a positive constant $C^{*}(q)$,  depending only on $q$, such that for all $q > 1,$
\begin{align}\label{eq:4.48}
\dbE\exp\Big\{q\gamma\mathop{\sup}\limits_{s\in [0,T+K]}|\Delta_{\theta}Y_{s}^{m,p}|\Big\} \leq \dbE\exp\bigg\{q\gamma\Big(\mathop{\sup}\limits_{s\in [0,T+K]}|\Delta_{\theta}Y_{s}^{m,p}|+|\Delta_{\theta}\bar{Y}^{m, p}_s|\Big)\bigg\}
\leq C^{*}(q),
\end{align}
which gives an exponential estimate for the term $\Delta_{\theta}Y^{m, p}.$  This completes this step.

\ms

\noindent{\it Step 5:  $\{(Y^{m}, Z^{m})\}_{m=1}^{\i}$ is a Cauchy sequence}

\ms

In this step, we prove that for any $q > 1$, the sequence $ \{(Y^{m}, Z^{m})\}_{m=1}^{\i}$ is a Cauchy sequence in $S^q_{\dbF}(0,T + K;\dbR^{n}) \times \mathcal{H}^q_{\dbF}(0,T+K; \dbR^{n\times d})$. Additionally, its   limit belongs to the space $\mathcal{E}(0,T+K; \dbR^{n}) \times \mathcal{M}(0,T+K; \dbR^{n\times d}).$ From (\ref{eq:4.48}), we have that for all $q > 1$ and $\theta \in (0, 1),$
\begin{align*}
\varlimsup_{m\rightarrow \i} \mathop{\sup}\limits_{p \geq 1}\dbE\exp\Big\{q\gamma\mathop{\sup}\limits_{s\in [0,T+K]}|\Delta_{\theta}Y_{s}^{m,p}|\Big\} 
\leq C^{*}(q).
\end{align*}
Note that the inequality $x^q \leq e^{qx}$ for positive $x$ and for $q>1$, then we have 
\begin{align*}
    \varlimsup_{m \rightarrow \i} \mathop{\sup}\limits_{p \geq 1}\dbE\Big[\gamma^q\mathop{\sup}\limits_{t\in [0,T+K]}\frac{|Y_{t}^{m+p} - \theta Y_{t}^{m}|^q}{(1-\theta)^q}\Big] &\leq \varlimsup_{m\rightarrow \i} \mathop{\sup}\limits_{p \geq 1}\dbE\Big[\exp\Big\{q\gamma\mathop{\sup}\limits_{t\in [0,T+K]}\frac{|Y_{t}^{m+p} - \theta Y_{t}^{m}|}{1-\theta}\Big\}\Big] 
    \\
    &\leq C^{*}(q).
\end{align*}
Moreover, 
\begin{align*}
\varlimsup_{m \rightarrow \i} \mathop{\sup}\limits_{p \geq 1}\dbE\Big[\mathop{\sup}\limits_{t\in [0,T+K]}|Y_{t}^{m+p} - \theta Y_{t}^{m}|^q\Big] \leq (1-\theta)^q\gamma^{-q}C^{*}(q).
\end{align*}
Hence, we have that
\begin{align}
&\varlimsup_{m \rightarrow \i} \mathop{\sup}\limits_{p \geq 1}\dbE\Big[\mathop{\sup}\limits_{t\in [0,T+K]}|Y_{t}^{m+p} -  Y_{t}^{m}|^q\Big] \nonumber\\
&\leq 2^{q}\bigg\{\varlimsup_{m \rightarrow \i} \mathop{\sup}\limits_{p \geq 1}\dbE\Big[\mathop{\sup}\limits_{t\in [0,T+K]}|Y_{t}^{m+p} -  \theta Y_{t}^{m}|^q\Big] + \mathop{\sup}\limits_{m\geq0}\mathop{\sup}\limits_{p\geq1}\dbE\Big[\mathop{\sup}\limits_{t\in [0,T+K]}(1-\theta)^{q}|Y_{t}^{m}|^{q} \Big]\bigg\} \nonumber \\
& \leq 2^{q}(1-\theta)^q\bigg\{\gamma^{-q}C^{*}(q) + \mathop{\sup}\limits_{m\geq0}\mathop{\sup}\limits_{p\geq1}\dbE\Big[\mathop{\sup}\limits_{t\in [0,T+K]}|Y_{t}^{m}|^{q}\Big]\bigg\}. \label{eq:5.29}
\end{align}
By sending $\theta$ to 1 and by noting (\ref{eq:4.13}), we deduce  that the sequence $\{Y^{m}\}_{m=0}^{\i}$ is a Cauchy sequence in the space $S^q_{\dbF}(0,T + K;\dbR^{n})$, which implies that there exists an adapted process $Y\in S^q_{\dbF}(0,T + K;\dbR^{n})$ such that for any $q>1$,
\begin{align}\label{eq:4.58}
\lim_{m\rightarrow\i}\dbE\Big[\mathop{\sup}\limits_{t\in [0,T+K]}|Y_{t}^{m}- Y_{t}|^q\Big] = 0.
\end{align}
Now, from (\ref{eq:4.58}), we obtain that 
\begin{align*}
0\leq\varliminf_{m\rightarrow\i}\dbE\Big[\Big|\mathop{\sup}\limits_{t\in [0,T+K]}|Y_{t}^{m}|- \mathop{\sup}\limits_{t\in [0,T+K]}|Y_{t}|\Big|\Big] &\leq \varlimsup_{m\rightarrow\i}\dbE\Big[\Big|\mathop{\sup}\limits_{t\in [0,T+K]}|Y_{t}^{m}|- \mathop{\sup}\limits_{t\in [0,T+K]}|Y_{t}|\Big|\Big] \\
&\leq \lim_{m\rightarrow\i}\dbE\Big[\mathop{\sup}\limits_{t\in [0,T+K]}|Y_{t}^{m}- Y_{t}|\Big] = 0.
\end{align*}
Then we can deduce that $\mathop{\sup}\limits_{t\in [0,T+K]}|Y_{t}^{m}|$ converge in probability to $\mathop{\sup}\limits_{t\in [0,T+K]}|Y_{t}|$. 
Thus,  there exists a subsequence $m_k$ such that 
\begin{align}\label{q}
\lim_{k\rightarrow\i}\Big|\mathop{\sup}\limits_{t\in [0,T+K]}|Y_{t}^{m_k}| - \mathop{\sup}\limits_{t\in [0,T+K]}|Y_{t}|\Big| = 0,\q a.s..
\end{align}
 %
%
Therefore, by using H\"{o}lder's inequality, Fatou's Lemma, and \eqref{eq:4.13}, we deduce that
\begin{equation}
\begin{aligned}\label{eq:4.60}
\dbE\exp\Big\{q\mathop{\sup}\limits_{t\in [0,T+K]}| Y_{t}|\Big\}
&\leq \dbE\bigg[\exp\Big\{2q\varliminf_{k\rightarrow \i}\Big|\mathop{\sup}\limits_{t\in [0,T+K]}|Y_{t}^{m_k}| - \mathop{\sup}\limits_{t\in [0,T+K]}|Y_{t}|\Big|\Big\}\bigg]\\
&\q\cdot \mathop{\sup}\limits_{k\geq 0}\dbE\exp\Big\{2q\mathop{\sup}\limits_{t\in [0,T+K]}|Y_{t}^{m_k}|\Big\} = \mathop{\sup}\limits_{k\geq 0}\dbE\exp\Big\{2q\mathop{\sup}\limits_{t\in [0,T+K]}|Y_{t}^{m_k}|\Big\} < \i,
\end{aligned}
\end{equation}
which implies that  $Y\in \mathcal{E}(0,T + K; \dbR^{n}).$

On the other hand, by applying It\^{o}'s formula to $|Y_{t}^{m+p} - Y_{t}^{m}|^2$ firstly, and then by taking $\frac{q}{2}$ power, it can be deduced that for all $q> 1$,
\begin{align}\label{eq:4.61}
&\ \dbE\Big[\Big(\int_{0}^{T}|Z_{s}^{m+p} - Z_{s}^{m}|^2ds\Big)^{\frac{q}{2}}\Big]\nonumber\\
 &\leq 2^{\frac{q}{2}+1}\dbE\Bigg[ \mathop{\sup}\limits_{t\in [0,T]}|Y_{t}^{m+p} - Y_{t}^{m}|^{\frac{q}{2}}\cdot \Bigg\{ \Big(\sum_{i=1}^{n}\mathop{\sup}\limits_{t\in [0,T]}\Big|\int_{0}^{t}Z_{s}^{m+p;i}-Z_{s}^{m;i}dW_s\Big|\Big)^{\frac{q}{2}}\nonumber\\
 &\q +\Big(\sum_{i=1}^{n}\int_{0}^{T}\Big|f^{i}(s,Y_{s}^{m+p-1}, Z_{s}^{m+p;i}, Y^{m+p-1}_{s+\delta_s}) - f^{i}(s,Y_{s}^{m-1}, Z_{s}^{m;i}, Y^{m-1}_{s+\delta_s})\Big|ds\Big)^{\frac{q}{2}}\Bigg\}\Bigg],
\end{align}
where we have used the inequality   $(x+y)^{\frac{q}{2}}\leq 2^{\frac{q}{2}}(x^{\frac{q}{2}}+y^{\frac{q}{2}})$ for positive $x$ and $y$.
Then, by using the item (i) of \autoref{assumption3}, as well as (\ref{eq:4.13}) and (\ref{eq:4.38}), we get that 
\begin{align}\label{eq:4.62}
\mathop{\sup}\limits_{m,p\geq 1}\dbE\Big[\Big(\int_{0}^{T}\Big|f^{i}(s,Y_{s}^{m+p-1}, Z_{s}^{m+p;i}, Y^{m+p-1}_{s+\delta_s}) - f^{i}(s,Y_{s}^{m-1}, Z_{s}^{m;i}, Y^{m-1}_{s+\delta_s})\Big|ds\Big)^q\Big] < +\i.
\end{align}
It follows from Burkholder-Davis-Gundy's inequality, (\ref{eq:4.13}), and (\ref{eq:4.38}) that there exists  a positive constant $b$ such that for each $i = 1, ..., n$,
\begin{align}\label{eqq:5.29}
\mathop{\sup}\limits_{m,p\geq 1}\dbE\Big[\mathop{\sup}\limits_{t\in [0,T]}\Big|\int_{0}^{t}(Z_{s}^{m+p;i}-Z_{s}^{m;i})dW_s\Big|^{q}\Big]\leq b\mathop{\sup}\limits_{m,p\geq 1}\dbE\Big[\Big(\int_{0}^{T}|Z_{s}^{m+p;i}-Z_{s}^{m;i}|^2ds\Big)^{\frac{q}{2}}\Big]< +\i .
\end{align}
Moreover, note that $Z_{t}^{m+p} - Z_{t}^{m} = 0$ when $t\in [T,T+K]$. Then, by applying H\"{o}lder's inequality to (\ref{eq:4.61}), and  by combining (\ref{eq:4.58}), (\ref{eq:4.62}), and (\ref{eqq:5.29}), we obtain that 
\begin{align*}
	\lim_{m\rightarrow\i}\mathop{\sup}\limits_{p\geq 1}\dbE\Big[\Big(\int_{0}^{T+K}|Z_{s}^{m+p} - Z_{s}^{m}|^2ds\Big)^{\frac{q}{2}}\Big] = 0.
\end{align*}
It follows that there exists a process $Z\in \mathcal{H}^q_{\dbF}(0,T+K; \dbR^{n\times d})$ such that 
\begin{align}\label{eq:4.64}
	\lim_{m\rightarrow\i}\dbE\Big[\Big(\int_{0}^{T+K}|Z_{s}^{m} - Z_{s}|^2ds\Big)^{\frac{q}{2}}\Big] = 0,
\end{align}
thus $Z\in\mathcal{M}(0,T+K; \dbR^{n\times d}).$
Finally, In view of (\ref{eq:4.58}) and (\ref{eq:4.64}), by letting $n\rightarrow \i$ in BSDE (\ref{eq:4.12}), one can easily conclude that the pair $(Y, Z) \in \mathcal{E}(0,T+K; \dbR^{n}) \times \mathcal{M}(0,T+K; \dbR^{n\times d})$ is a solution of BSDE (\ref{eq:4.1}).

\ms

\noindent{\it Step 6:  Uniqueness of the solution} 

\ms

In this step, we prove the uniqueness of the solutions. Let $(\tilde{Y}, \tilde{Z}) \in \mathcal{E}(0,T+K; \dbR^{n}) \times \mathcal{M}(0,T+K; \dbR^{n\times d})$ be another solution of BSDE (\ref{eq:4.1}). To show the uniqueness, it suffice to show  that $Y = \tilde{Y}$ and $Z = \tilde{Z}$. Firstly, similarly to the deduction of (\ref{eq:4.61}), we obtain that for all $q>1$,
\begin{align}\label{eq:4.66}
&\ \dbE\Big[\Big(\int_{0}^{T}|Z_{s} - \tilde{Z}_{s}|^2ds\Big)^{\frac{q}{2}}\Big]\nonumber\\
&\leq 2^{\frac{q}{2}+1}\dbE\Bigg[ \mathop{\sup}\limits_{t\in [0,T]}|Y_{t} - \tilde{Y}_{t}|^{\frac{q}{2}}\cdot \Bigg\{ \mathop{\sup}\limits_{t\in [0,T]}\Big|\int_{0}^{t}Z_{s} - \tilde{Z}_{s}dW_s\Big|^{\frac{q}{2}}\nonumber\\
&\q +\bigg(\sum_{i=1}^{n}\int_{0}^{T}\Big|f^{i}(s,Y_{s}, Z_{s}, Y_{s+\delta_s}) - f^{i}(s,\tilde{Y}_{s}, \tilde{Z}_{s}, \tilde{Y}_{s+\delta_s})\Big|ds\bigg)^{\frac{q}{2}}\Bigg\}\Bigg].
\end{align}
Now,  we denote that 
\begin{align*}
\Delta_{\theta}P\triangleq \frac{Y - \theta\tilde{Y}}{1-\theta}\quad \hbox{and} \quad \Delta_{\theta}Q\triangleq \frac{Z - \theta\tilde{Z}}{1-\theta}, \q   \theta\in(0,1).
\end{align*}
By making use of the same argument as in the discussion from (\ref{eqqq::5.43}) to (\ref{eq:4.48}), one can easily show that for all $q > 1,$ there exists a positive constant $\tilde{C}(q)$ that depends only on $q$ such that 
\begin{align}\label{eq:4.67}
\dbE\exp\Big\{q\gamma\mathop{\sup}\limits_{s\in[0,T+K]}\{|\Delta_{\theta}P_{s}|\}\Big\} 
		\leq \tilde{C}(q).
\end{align}
Next, note the inequality  $|e^x - e^y| \leq (e^x + e^y)|x-y|$ for positive $x$ and $y$, which in fact can be derived using the mean value theorem. Then, by applying H\"{o}lder's inequality and  using a similar deduction as in (\ref{eq:5.29}), we have that 
\begin{equation}\label{5.40}
\begin{aligned}
&\ \dbE \mathop{\sup}\limits_{s\in[0,T+K]}|\exp(Y_{s}) - \exp(\tilde{Y}_{s})|^q \\ &\leq 2^{q}\dbE \mathop{\sup}\limits_{s\in[0,T+K]}\Big\{\exp(q|Y_s |)+ \exp(q|\tilde{Y}_{s}|)\Big\}|Y_{s} - \tilde{Y}_{s}|^q \\
&\leq2^{q+1}\dbE\mathop{\sup}\limits_{s\in[0,T+K]}\Big\{\exp(2q|Y_s |)+ \exp(2q|\tilde{Y}_{s}|)\Big\}
 \cdot\big\{\dbE |Y_{s} - \tilde{Y}_{s}|^{2q} \big\}^{\frac{1}{2}}\\
&\leq (1-\theta)^{2q}2^{3q+1} \dbE\mathop{\sup}\limits_{s\in[0,T+K]}\Big\{\exp(2q|Y_s |)+ \exp(2q|\tilde{Y}_{s}|)\Big\}
\cdot\Big\{\gamma^{-2q}\tilde{C}(2q) + \dbE\mathop{\sup}\limits_{s\in[0,T+K]}| \tilde{Y}_{s}|^{2q}\Big\}^\frac{1}{2}.
\end{aligned}
\end{equation}
Finally, on one hand, by taking $\theta \rightarrow 1$, we obtain that $\exp(Y)= \exp(\tilde{Y})$ on the whole interval $[0, T+K]$. 
On the other hand, by  (\ref{eq:4.66}), \eqref{5.40}, and H\"{o}lder's inequality, we conclude that  $Z = \tilde{Z}$ on the whole interval $[0, T+K]$. 
This completes the whole proof.
\end{proof}


Note that, in the context of multidimensional BSDEs with unbounded terminal values, \autoref{thm:5.2} establishes the existence and uniqueness of the quadratic BSDE given by \eqref{eq:4.1}, where the generator $f(\cd)$ is independent of the anticipated term of $Z$. We now consider the following BSDE in which the generator includes the anticipated term of $Z$:
\begin{equation}\label{eq:5.73}
	\left\{\begin{aligned}
		&Y_{t} = \xi_{T} + \int_{t}^{T}\Big\{f(s,Z_s)+ \dbE\big[g(s, Y_{s}, Z_{s}, Y_{s + \delta_{s}}, Z_{s+\zeta_{s}})\big]\Big\}
		ds  - \int_{t}^{T}Z_{s}dW_s, \q t\in [0, T];\\
		&Y_{t} = \xi_{t}, \q Z_{t} = \eta_{t}, \q t\in [T, T+K].
	\end{aligned}\right.
\end{equation}
 Note that  $g(\omega, t, y, z, \phi_r, \psi_{\bar{r}}): { \Om \ts [0, 	T] \ts \dbR^n \times \dbR^{n\times d}\times L^2_{\sF_r}(\Om; \dbR^n)\times L^2_{\sF_{\bar{r}}}(\Om; \dbR^{n\times d}) \rightarrow L^2_{\sF_t}(\Om;\dbR^n)}$ and $g(\cdot, \cdot, y, z, \phi_r, \psi_{\bar{r}})$ is $\dbF$-progressively measurable. In addition, $f(\omega, t,  z ): { \Om \ts [0, 	T]  \times \dbR^{n\times d}  \rightarrow L^2_{\sF_t}(\Om;\dbR^n)}$ and $f(\cdot, \cdot,  z)$ is $\dbF$-progressively measurable, where $r\in [t, T+K]$.
	%
	\begin{assumption}\label{assumption4} \rm
		For all $t\in[0, T]$, $d\dbP \times dt$-a.e., $y,\bar{y}\in  \dbR^{n}, z\in \dbR^{n\times d}, \phi, \bar{\phi}\in \mathcal{H}^2_{\dbF}(t,T+K;\dbR^{n}),$ and $\psi\in \mathcal{H}^2_{\dbF}(t,T+K;\dbR^{n\times d})$, the generators $f$, $g$ satisfies the following conditions:
		%
		%
		\begin{itemize}
			\item [$\rm(i)$] There exist  positive constants $C$, $\gamma > 0$, and $\Gamma$ with $p\geq 1, \dbE[\exp\{p\int_{0}^{T}|\Gamma_t|dt\}] <+\i$ such that 
			\begin{align*}
				&|g(t, y, z, \phi_r, \psi_{\bar{r}})| \leq C\big\{1 + |y| + |z|^2 + \dbE_t[|\phi_r|] + \dbE_t[|\psi_{\bar{r}}|]\big\},\\
				& |g(t, y, z, \phi_r, \psi_{\bar{r}}) - g(t, \bar{y}, z, \bar{\phi_r}, \psi_{\bar{r}})| \leq C\big\{ |y - \bar{y}| + \dbE_t[|\phi_r - \bar{\phi}_{r}|]\big\},\\
				&|f^{i}(t, z)| \leq \Gamma_{t} +  \frac{\gamma}{2}|z^{i}|^2.
			\end{align*}
			\item [$\rm(ii)$] For $i = 1, ..., n$, it holds that $f^{i}(w, t, \cdot)$ is either convex or concave.
			\item [$\rm(iii)$] The coefficients 
			$\xi\in\mathcal{E}(T,T+K; \dbR^n)$ 
			and 
			$\eta \in \mathcal{M}(T,T+K; \dbR^{n\times d})$.
		\end{itemize}
	\end{assumption}
	%
	%
	%

	\begin{theorem}\label{thm:5.3} \sl 
		Under  \autoref{assumption4}, BSDE \eqref{eq:5.73} admits a unique solution $(Y, Z) \in S^2_{\dbF}(0,T + K;\dbR^{n}) \times \mathcal{M}(0,T+K; \dbR^{n\times d})$. 
	\end{theorem}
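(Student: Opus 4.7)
My plan is to split the problem into two parts by exploiting the very different structures of $f$ and $g$: $f$ drives the quadratic dynamics but carries no anticipated variables, whereas $g$ appears only inside a conditional expectation and is Lipschitz in the anticipated variables $(y,\phi)$. The decomposition I will use is the following. If $(Y,Z)$ satisfies \eqref{eq:5.73}, set
$$ H(t) := \dbE_t\Big[\int_t^T g(s, Y_s, Z_s, Y_{s+\delta_s}, Z_{s+\zeta_s})\, ds\Big], \q t\in[0,T], $$
so $H$ is $\sF_t$-adapted, continuous, and $H(T)=0$. Subtracting $H(t)$ from both sides of \eqref{eq:5.73} removes the $g$-contribution and shows that $\tilde Y := Y - H$ will satisfy the standard quadratic BSDE
$$ \tilde Y_t = \xi_T + \int_t^T f(s,Z_s)\, ds - \int_t^T Z_s\, dW_s, \q t\in[0,T], $$
in which no anticipated variables appear. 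This reversible identity will drive both existence and uniqueness.

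\textbf{Step 1} (quadratic sub-BSDE): I will apply \autoref{thm:5.2} to the above equation, viewed as the special case of \eqref{eq:4.1} in which the generator depends only on $z$. In this degenerate case the hypotheses of \autoref{assumption3} reduce trivially to items (i)--(iii) of \autoref{assumption4} (no $y$- or $\phi$-dependence is present), so I obtain a unique pair $(\tilde Y, Z) \in \mathcal{E}(0,T+K;\dbR^n)\times\mathcal{M}(0,T+K;\dbR^{n\times d})$. The crucial gain is that $Z$ then lies in every $\mathcal{H}^q_{\dbF}$, which is exactly what will be needed to tame the $|z|^2$ term in the bound on $g$ at the next step.

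\textbf{Step 2} (Lipschitz fixed point for $Y$): With $(\tilde Y, Z)$ now fixed, I will define
$$ \Phi(U)(t) := \tilde Y_t + \dbE_t\Big[\int_t^T g(s, U_s, Z_s, U_{s+\delta_s}, Z_{s+\zeta_s})\, ds\Big], \q t\in[0,T], $$
and $\Phi(U)(t) := \xi_t$ on $[T,T+K]$, on the set of processes $U\in S^2_{\dbF}(0,T+K;\dbR^n)$ that coincide with $\xi$ on $[T,T+K]$. The growth bound in \autoref{assumption4}(i), together with $Z\in\mathcal{M}$, $U\in S^2$, $\xi\in\mathcal{E}$, and Doob's $L^2$ inequality applied to the martingale $t\mapsto\dbE_t[\int_0^{T+K}|Z_s|^2\,ds]$, will show that $\Phi$ maps $S^2$ into $S^2$. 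The Lipschitz property of $g$ in $(y,\phi)$ together with the delay-integrability estimate \eqref{eq::3.2} gives
$$ |\Phi(U)(t) - \Phi(\bar U)(t)| \leq C(1+L)\, \dbE_t\Big[\int_t^{T+K} |U_s-\bar U_s|\, ds\Big], $$
and iterating this Picard-type estimate $n$ times, together with Fubini and Doob's $L^2$ inequality, yields a bound of the form $\|\Phi^n(U)-\Phi^n(\bar U)\|_{S^2} \leq \frac{2[C(1+L)(T+K)]^n}{(n-1)!}\|U-\bar U\|_{S^2}$, hence a strict contraction for $n$ large. The generalized Banach fixed-point theorem then supplies the unique $Y\in S^2$ with $\Phi(Y)=Y$; the pair $(Y,Z)$ is then the desired solution of \eqref{eq:5.73}. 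For uniqueness, any two solutions $(Y,Z)$ and $(\bar Y,\bar Z)$ induce, via the decomposition, two solutions of the same $f$-BSDE, so \autoref{thm:5.2} forces $Z=\bar Z$, after which $Y$ and $\bar Y$ are fixed points of the same $\Phi$ and must coincide.

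The main obstacle will be the self-consistency of the decomposition in the correct function space: showing that $H$ has enough integrability for $Y=\tilde Y+H$ to remain in $S^2$. The obstruction is that $g$ grows quadratically in $z$, so $H$ inherits only polynomial (never exponential) moments, which is precisely why \autoref{thm:5.3} places $Y$ in $S^2$ rather than in the stronger class $\mathcal{E}$ used for $\tilde Y$. Since $Z\in\mathcal{M}$ lies in every $\mathcal{H}^q_{\dbF}$, the conditional expectation $\dbE_t[\int_t^{T+K}|Z_s|^2\, ds]$ belongs to every $L^p$, and Doob's inequality then places $H$ in $\bigcap_{p\geq1} S^p$, which is amply sufficient to run the $S^2$-iteration above.
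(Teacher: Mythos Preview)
Your proposal is correct and follows essentially the same route as the paper: both first solve the purely quadratic $f$-BSDE via \autoref{thm:5.2} to fix $(\tilde Y,Z)\in\mathcal{E}\times\mathcal{M}$, and then obtain $Y$ as the unique fixed point of the Lipschitz map $U\mapsto \tilde Y + \int_\cdot^T\dbE_\cdot[g(s,U_s,Z_s,U_{s+\delta_s},Z_{s+\zeta_s})]\,ds$ in $S^2$. The only cosmetic difference is that the paper achieves the contraction in one step using an exponentially weighted norm $e^{\alpha t}|\cdot|^2$, whereas you iterate Picard and invoke the generalized Banach fixed-point theorem; both are standard and equivalent devices.
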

	\begin{proof}
		First, we consider the following BSDE:
		\begin{align}\label{eq:5.78}
			\widetilde{Y}_{t} = \xi_{T}  + \int_{t}^{T}f(s, Z_{s})ds - \int_{t}^{T}Z_sdW_s,\q t\in[0, T].
		\end{align}
		It follows from \autoref{thm:5.2} and \autoref{assumption4} that BSDE (\ref{eq:5.78}) admits a unique solution $(\widetilde{Y}, Z)$ in the space $\mathcal{E}(0,T; \dbR^n) \times \mathcal{M}(0,T+K; \dbR^{n\times d})$.
		Second, we consider the following equation
		\begin{equation}\label{241229}
			\left\{\begin{aligned}
				&Y_{t} = \widetilde{Y}_{t} + \int_{t}^{T}\dbE\big[g(s, Y_{s}, Z_{s}, Y_{s + \delta_s}, Z_{s+\zeta_s})\big]ds, \q t\in [0, T];\\
				&Y_{t} = \xi_{t},\q Z_{t} = \eta_{t},\q t\in [T, T + K]. 
			\end{aligned}\right.
		\end{equation}
		Note that in \rf{241229}, the value of $\widetilde{Y}$ and $Z$ is already determined by the uniqueness of BSDE \rf{eq:5.78} for  $t\in [0, T]$ and $t\in [0, T+K]$, respectively. Next, we would like to prove that \rf{241229} admits a unique solution $Y$. To do this, we define  $Y_{t}^{0} = 0$ for $t\in[0, T]$ and $Y_{t}^{0} = \xi_t$ for $t\in[T, T+K]$. 
		Moreover,  for any $m \in\dbN,$ we define 
		\begin{equation}\label{eq:5.80}
			\left\{\begin{aligned}
				&Y^{m+1}_{t} = \widetilde{Y}_{t} + \int_{t}^{T}\dbE\big[g(s, Y^{m}_{s}, Z_{s}, Y^{m}_{s + \delta_s}, Z_{s+\zeta_s})\big]ds, \q t\in [0, T];\\
				& Y^{m+1}_{t} = \xi_{t}, \q t\in [T, T+K].
			\end{aligned}\right.
		\end{equation}
		For the existence of Eq. \rf{241229}, we would like to prove that the sequence $\{Y^{m};m\in\dbN\}$ is Cauchy in the space $S^2_{\dbF}(0,T + K;\dbR^{n})$. From \autoref{assumption4}, we have that
		%
		\begin{equation}\label{1224}
			\begin{aligned}
				&\ \dbE\Big[\mathop{\sup}\limits_{t\in [0, T+K]}|Y_{t}^{m+1}|^2\Big] \leq \dbE\Big[\mathop{\sup}\limits_{t\in [0, T]}|Y_{t}^{m+1}|^2\Big] + \dbE\Big[\mathop{\sup}\limits_{t\in [T, T+K]}|\xi_t|^2\Big] \\
				&\leq 2^5\bigg\{\dbE\Big[\mathop{\sup}\limits_{\tau\in [0, T]}|\widetilde{Y}_{t}|^2\Big] + C^2 T^2\Big\{1+4\dbE\Big[\mathop{\sup}\limits_{s\in [0, T+K]}|Y_{s}^{m}|^2\Big]\Big\}+C^2\dbE\Big[\Big(\int_{0}^{T}|Z_s|^2ds\Big)^2\Big]\\
				&\q\q\q+C^2L\dbE\Big[\int_{0}^{T+K}|Z_s|^2ds\Big]\bigg\}+\dbE\Big[\mathop{\sup}\limits_{t\in [T, T+K]}|\xi_t|^2\Big].
			\end{aligned}
		\end{equation}
		Note that $(\widetilde{Y}, Z) \in  \mathcal{E}(0,T; \dbR^n) \times \mathcal{M}(0,T+K; \dbR^{n\times d}).$
		Thus, if  $Y^{m} \in S^2_{\dbF}(0,T + K;\dbR^{n}),$ then from (\ref{1224}) and the item (iii) of \autoref{assumption4}, 
		it is easy to deduce that  $Y^{m+1} \in S^2_{\dbF}(0,T + K;\dbR^{n}).$ 
		Therefore, by induction, we obtain that $Y^{m} \in S^2_{\dbF}(0,T + K;\dbR^{n})$ for any $m\geq 0.$ 
		For simplicity of presentation, next we denote that
		$\Delta Y^{m} \triangleq Y^{m} - Y^{m-1}.$ Then, when $t\in [0, T],$ we have that
		\begin{align*}
			\Delta Y_{t}^{m + 1} = \int_{t}^{T} \dbE\big[g(s, Y^{m}_{s}, Z_{s}, Y^{m}_{s + \delta_s}, Z_{s+\zeta_s}) - g(s, Y^{m-1}_{s}, Z_{s}, Y^{m-1}_{s + \delta_s}, Z_{s+\zeta_s})\big] ds.
		\end{align*}
		Applying It\^{o}'s formula to $e^{\alpha t}|Y_{t}^{m+1}|^2$,  where 
		$\a$ is a positive constant which will be determined later,
		we have that
		\begin{align}
			&\ e^{\alpha t}|\Delta Y_{t}^{m+1}|^2 + \alpha\int_{t}^{T}e^{\alpha s}|\Delta Y_{s}^{m+1}|^{2}ds \nonumber\\
			&= 2\int_{t}^{T}e^{\alpha s}\Delta Y_{s}^{m+1}\dbE\Big[g(s, Y^{m}_{s}, Z_{s}, Y^{m}_{s + \delta_s}, Z_{s+\zeta_s}) - g(s, Y^{m-1}_{s}, Z_{s}, Y^{m-1}_{s + \delta_s}, Z_{s+\zeta_s})\Big]ds\nonumber\\
			&\leq  \int_{t}^{T}e^{\alpha s}2\sqrt{\frac{\alpha}{2}}\Delta Y_{s}^{m+1} C\sqrt{\frac{2}{\alpha}}\dbE\Big[ |\Delta Y^{m}_{s}| + \dbE_{s}[|\Delta Y^{m}_{s + \delta_s}]| \Big]ds\nonumber\\
			&\leq \frac{\alpha}{2}\int_{t}^{T}e^{\alpha s}|\Delta Y_{s}^{m+1}|^{2}ds + \int_{t}^{T}\frac{4C^2}{\alpha}\Big\{ e^{\alpha s}\dbE[|\Delta Y^{m}_{s}|^2] + e^{\alpha s}\dbE[|\Delta Y^{m}_{s+\delta_s} |^2]\Big\}ds.
		\end{align}
		%
		%
		%
		By taking supremum,  expectation, as well as applying Tower property, and noting the fact that $|\Delta Y_{s}^{m}| = 0$ when $s\in [T, T+K]$, we have that
		\begin{align*}
			\dbE\Big[\mathop{\sup}\limits_{t\in [0, T + K]}e^{\alpha t}|\Delta Y_{t}^{m+1}|^2\Big] 
			%
			\leq \frac{16C^2T}{\alpha}\dbE\Big[\mathop{\sup}\limits_{s\in [0, T + K]}e^{\alpha s}|\Delta Y_{s}^{m}|^2\Big].
		\end{align*}
		Now, by letting $\alpha \triangleq 64C^2 T$, we obtain that
		\begin{align}\label{eq:5.83}
			\dbE\Big[\mathop{\sup}\limits_{s\in [0, T + K]}e^{\alpha s}|\Delta Y_{s}^{m+1}|^2\Big] \leq \frac{1}{4}\dbE\Big[\mathop{\sup}\limits_{s\in [0, T + K]}e^{\alpha s}|\Delta Y_{s}^{m}|^2\Big].
		\end{align}
		%
		%
		Thus, one has that for $m > k$,
		\begin{align*}
			\|Y^{m} - Y^{k}\|_{S^2_{\dbF}(0,T + K;\dbR^{n})} \leq \sum_{i = k+1}^{m}\|\Delta Y^{i}\|_{S^2_{\dbF}(0,T + K;\dbR^{n})} \leq \sum_{i = k+1}^{m}\frac{M}{2^{i-1}}\leq \frac{M}{2^{k-1}} \rightarrow 0, \q \hb{as}\q  k\rightarrow +\i,
		\end{align*}
		which implies that  $\{Y^{m};m\in\dbN\}$ is a Cauchy sequence in the space $S^2_{\dbF}(0,T + K;\dbR^{n})$.
		Therefore,  there exists a process $Y \in S^2_{\dbF}(0,T + K;\dbR^{n})$ such that $Y^{m} \rightarrow Y$ in $S^2_{\dbF}(0,T + K;\dbR^{n})$, as $m\rightarrow +\i.$
		It follows that  Eq. \rf{241229} admits a  solution $Y \in S^2_{\dbF}(0,T + K;\dbR^{n})$.

		For the uniqueness of BSDE \rf{eq::1.1}, we only need to prove that the solution of Eq. \rf{241229} is unique. If $\overline Y\in S^q_{\dbF}(0,T + K;\dbR^{n})$ is another solution of Eq. \rf{241229}. Then, similarly to the deduction of (\ref{eq:5.80}) - (\ref{eq:5.83}), we have that 
		\begin{align*}
			\dbE\Big[\mathop{\sup}\limits_{s\in [0, T + K]}e^{\alpha s}
			| Y_{s}-\overline{Y}_s|^2\Big] \leq \frac{1}{4}\dbE\Big[\mathop{\sup}\limits_{s\in [0, T + K]}e^{\alpha s}| Y_{s}-\overline{Y}_s|^2\Big],
		\end{align*}
		which implies the uniqueness of Eq. \rf{241229}. 
		This competes the proof.
		%
	\end{proof}


\section{Appendix}\label{section6}
In this part, we will give the proofs of some auxiliary results on one-dimensional quadratic BSDE \eqref{241211}.

First, we present a lemma concerning one-dimensional BSDEs with quadratic growth,  which is essentially an extension of Hu--Tang \cite[Lemma 2.1]{Hu-Tang-16} and will be used later.

\begin{lemma}\label{prop2.1}  \sl 
	Suppose that there exists a positive constant $C$ such that for any  $t \in [0, T]$ and  $z,\bar{z}\in \dbR^{ d}$,  the scalar-value generator $f: \Om\times[0, T]\times \dbR^{d} \rightarrow \dbR$ satisfies the  conditions:
	\begin{align}  \label{eqq}
		|f(t, z)| \leq \theta_{t} + |H_t|^{1+\alpha} + \frac{\gamma}{2} |z|^2  \q\hbox{and}\q   
		|f(t,z) - f(t,\bar{z})| \leq C(\beta_{t} + |z| + |\bar{z}|) |z - \bar{z}|,  
	\end{align}
	where $\int_{0}^{T} \theta_tdt$ is (essentially) bounded,	$\beta_{t} \in \mathcal{Z}^2_{\dbF}(0,T; \dbR)$, and  $H\cdot W$ is a $BMO$ martingale. 
	Then, for any $\xi \in L_{\sF _{T}}^{\i}(\Om; \dbR),$ BSDE \eqref{241211}
	has a unique adapted solution $(Y, Z)\in \mathcal{H}^\i_{\dbF}(0,T;\dbR) \times \mathcal{Z}^2_{\dbF}(0,T;\dbR^{d})$. Moreover, we have
	\begin{align}
		&	|Y_t| \leq  \frac{1}{\gamma}\ln2  + \|\xi\|_\i  +  \bigg\|\int_{0}^{T}\theta_{s}ds\bigg\|_{\i}  + C_{\alpha} \gamma^{\frac{1+\alpha}{1-\alpha}}\|H\|^{\frac{2(1+\alpha)}{1-\alpha}}_{\mathcal{Z}^2_{\dbF}(t,T;\dbR)} (T - t), \label{2.4}\\	
		&	 \dbE_{\tau}\Big[\int_{\tau}^{T}|Z_s|^2 ds\Big]
		\leq \frac{1}{\gamma}\exp\{{2\gamma\|Y\|_{\mathcal{H}^\i_{\dbF}(t,T; \dbR)}}\}\Big\{1+2\bigg\|\int_{0}^{T}\theta_{s}ds\bigg\|_{\i}+  2C_{\alpha}
		\|H\|^{\frac{2(1+\alpha)}{1-\alpha}}_{\mathcal{Z}^2_{\dbF}(t,T;\dbR)}(T-t)\Big\}\nonumber\\
		&\q\q\q\q\q\q\q\q\q\q +\frac{1}{\gamma^2}\exp\{2\gamma \|\xi\|_{\i}\}, \q \forall t\in[0,T],\q \forall \tau \in \mathscr{T}[t,T], \label{new}
	\end{align}
where
\begin{align*}
C_{\alpha} \triangleq \frac{1-\alpha}{2}(1+\alpha)^{\frac{1+\alpha}{1-\alpha}}.
\end{align*}
\end{lemma}
We would like to omit the detailed proof of \autoref{prop2.1}, which is a special case of Fan--Hu--Tang \cite[Lemma A.1]{fan2023multi}.

\begin{proof}[\bf{Proof of \autoref{prop:2.1}}]
	Let 
	\begin{align*}
		\bar{g}_{t} \triangleq \rho(|U_{t}|) +  n\lambda|V_{t}|^{1+\alpha}+ \rho_{0}(\dbE_t[|U_{t+\delta_t}|]) +\lambda_{0}\big(\dbE_{t}[|V_{t+\zeta_t}|]\big)^{1+\alpha}, \q t\in [0, T].
	\end{align*}
	Then by the inequalities $x\leq (x+1)^{1+\alpha}$, $x^{1+\alpha} + y^{1+\alpha}\leq (x+y)^{1+\alpha}$, for any $x, y \geq 0$,	we have
	\begin{align*}
		\bar{g}_{t} &\leq \Big(\rho(|U_t|)+1\Big)^{1+\alpha} + \Big((n\lambda+1)|V_{t}|\Big)^{1+\alpha} + \Big(\rho_{0}(\dbE_t[|U_{t+\delta_t}|])+1\Big)^{1+\alpha} + \Big((\lambda_{0}+1)\dbE_{t}[|V_{t+\zeta_t}|]\Big)^{1+\alpha}\\
		&\leq \bigg(2+(\rho+\rho_{0})\big(\|U\|_{\mathcal{H}^\i_{\dbF}(t,T+K; \dbR^n)}\big) + (n\lambda+1)|V_{t}| + (\lambda_{0}+1)\dbE_{t}[|V_{t+\zeta_t}|]\bigg)^{1+\alpha}.
	\end{align*}
	Define	
	\begin{align*}
		\bar{H}_{t} \triangleq 2+(\rho+\rho_{0})\big(\|U\|_{\mathcal{H}^\i_{\dbF}(t,T+K; \dbR^n)}\big) + (n\lambda+1)|V_{t}| + (\lambda_{0}+1)\dbE_{t}[|V_{t+\zeta_t}|], \q t\in [0, T].
	\end{align*}
	It then follows from $V\in\mathcal{Z}^2_{\dbF}(0,T;\dbR^{n\times d})$ that $|V|\in \mathcal{Z}^2_{\dbF}(0,T;\dbR)$. And since
	\begin{align}\label{eq::3.6}
		\|\dbE_{.}[|V_{.+\zeta_.}|]\|_{\mathcal{Z}^2_{\dbF}(0,T;\dbR)}&=\mathop{\sup}\limits_{\tau \in \mathscr{T}[0,T]} \bigg\|\dbE_{\tau}\Big[\int_\tau^T\Big(\dbE_{s}[|V_{s+\zeta_s}|]\Big)^2ds\Big]\bigg\|^{\frac{1}{2}}_{\i}\nonumber\\
		&\leq \mathop{\sup}\limits_{\tau \in \mathscr{T}[0,T]} \bigg\|\dbE_{\tau}\Big[\int_\tau^T\dbE_{s}[|V_{s+\zeta_s}|^2]ds\Big]\bigg\|^{\frac{1}{2}}_{\i}\nonumber\\
		&=\mathop{\sup}\limits_{\tau \in \mathscr{T}[0,T]} \bigg\|\dbE_{\tau}\Big[\int_\tau^T|V_{s+\zeta_s}|^2ds\Big]\bigg\|^{\frac{1}{2}}_{\i}\\
		&\leq L \mathop{\sup}\limits_{\tau \in \mathscr{T}[0,T]} \bigg\|\dbE_{\tau}\Big[\int_\tau^{T+K}|V_{s}|^2ds\Big]\bigg\|^{\frac{1}{2}}_{\i}\nonumber\\
		&\leq L \mathop{\sup}\limits_{\tau \in \mathscr{T}[0,T+K]} \bigg\|\dbE_{\tau}\Big[\int_\tau^{T+K}|V_{s}|^2ds\Big]\bigg\|^{\frac{1}{2}}_{\i}
		=L\|V\|_{\mathcal{Z}^2_{\dbF}(0,T+K;\dbR^{n\times d})},\nonumber
	\end{align}
	we have $\dbE_{.}[|V_{.+\zeta_.}|] \in \mathcal{Z}^2_{\dbF}(0,T;\dbR)$. Therefore, by homogeneity and triangular inequality properties of norm $\mathcal{Z}^2_{\dbF}(0,T;\dbR)$, we conclude that $\bar{H}\in \mathcal{Z}^2_{\dbF}(0,T;\dbR)$.
	Now we observe that the generator of BSDE (\ref{241211}) satisfies the conditions  in \autoref{prop2.1} (by letting $\bar{H}_{t} = H_{t}$). It then follows from \autoref{prop2.1} that BSDE (\ref{241211}) admits a unique adapted solution $(Y, Z)\in \mathcal{H}^\i_{\dbF}(0,T;\dbR) \times \mathcal{Z}^2_{\dbF}(0,T;\dbR^{d})$. Finally, by homogeneity and triangular inequality properties of norm $\mathcal{Z}^2_{\dbF}(0,T;\dbR)$, (\ref{eq::3.6}), as well as the inequality that $(x+y)^2 \leq 2x^2+2y^2$, for any $x, y \geq 0$, we obtain (\ref{2.6}) and (\ref{eq2.7}) easily. Thus the proof is complete.
\end{proof}
%

\begin{proof}[\bf{Proof of \autoref{lemma:6.2}}]
	For fixed $\tau\in \mathscr{T}[0, T]$. Define 
	\begin{align*}
		\tau_n \triangleq \inf\Big\{s\in [\tau, T] : \int_{0}^{s}|Z_r|^2\exp (2M_r) dr \geq n\Big\} \wedge T
	\end{align*}
	and
	\begin{align*}
		\exp(M_{\bar{t}}) \triangleq \exp\Big\{\gamma|Y_{\bar{t}}| + \gamma \int_{\tau}^{_{\bar{t}}} (\theta_s + \sigma|U_s| + \lambda |V_s|^{1+\alpha} + \sigma_0\dbE_{s}[|U_{s+\delta_s}|] + \lambda_{0}\dbE_{s}[|V_{s+\zeta_s}|])ds\Big\}. 
	\end{align*}       
	Then, for any $\tau \in \mathscr{T}[0, T]$, by using It\^{o}-Tanaka's formula to $\exp(M)$ in $[\tau, \tau_n]$,                
	we have that 
	\begin{align*}
		\exp(\gamma|Y_\tau|) = \dbE_\tau \exp(M_\tau) \leq \dbE_\tau \exp(M_{\tau_n}) - \gamma\dbE_\tau \int_{\tau}^{\tau_n}\sgn(Y_s)Z_s \exp(M_s)dW_s,
	\end{align*}
	where we  used the condition (\ref{eq:6.6}). Now, noting the definition of $\tau_n$ and Optional Sampling theorem, one can deduce that 
	$$\dbE_\tau\int_{\tau}^{\tau_n}\sgn(Y_s)Z_s \exp(M_s)dW_s = \dbE_\tau\Big[\int_{0}^{\tau_n}\sgn(Y_s)Z_s \exp(M_s)dW_s\Big] - \int_{0}^{\tau}\sgn(Y_s)Z_s \exp(M_s)dW_s =  0.$$ Thus, for any $\tau \in \mathscr{T}[0, T]$,
	\begin{align*}
		&\ \exp(\gamma|Y_\tau|) \leq \dbE_\tau \exp(M_{\tau_n})\\
		&\leq\dbE_{\tau}\exp\bigg\{\gamma\|\xi_{T}\|_{\i} + \gamma \bigg\|\int_{0}^{T} \theta_s ds\bigg\|_{\i} + (\sigma + \sigma_0)\gamma\|U\|_{\mathcal{H}^\i_{\dbF}(\tau,T+K)}\\
		&\q\q\q\q\q(T-\tau) + \lambda\gamma\int_{\tau}^{\tau_n}|V_s|^{1+\alpha}ds + \lambda_{0}\gamma \int_{\tau}^{\tau_n}\dbE_{s}[|V_{s+\zeta_s}|]ds
		\bigg\}.
	\end{align*}
	Now, note that  the pair $(U, V)$ belongs to  $\mathcal{H}^\i_{\dbF}(0,T+K;\dbR^{n}) \times \mathcal{Z}^2_{\dbF}(0,T+K;\dbR^{n\times d})$, $Y \in \mathcal{H}^\i_{\dbF}(0, T+K)$. Then, by  the dominated convergence theorem, we have that for any $\tau \in \mathscr{T}[0, T]$,
	\begin{align*}
		\exp(\gamma|Y_\tau|) \leq& \dbE_\tau \Big\{\lim\limits_{n\rightarrow \i}\exp(M_{\tau_n})\Big\}\\ \leq&\dbE_{\tau}\exp\bigg\{\gamma\|\xi_{T}\|_{\i} + \gamma \bigg\|\int_{0}^{T} \theta_s ds\bigg\|_{\i} + (\sigma + \sigma_0)\gamma\|U\|_{\mathcal{H}^\i_{\dbF}(\tau,T+K)}(T-\tau) \\
		&\qq\qq + \lambda\gamma\lim\limits_{n\rightarrow \i}\int_{\tau}^{\tau_n}|V_s|^{1+\alpha}ds + \lambda_{0}\gamma\lim\limits_{n\rightarrow \i} \int_{\tau}^{\tau_n}\dbE_{s}[|V_{s+\zeta_s}|]ds
		\bigg\},
	\end{align*}
	which implies that (\ref{eq:6.7}) holds.
\end{proof}
\begin{proof}[\bf{Proof of \autoref{lemma:6.2-2}}]
	We first prove the case of (\ref{eq:6.8}).
	Since $(Y, Z)$ is a solution of BSDE (\ref{241211}) and (\ref{eq:6.8}) holds, we have for each $n \geq 1$ and for any fixed $\tau\in \mathscr{T}[0, T]$,
	\begin{align*}
		\frac{\gamma}{2}\int_{\tau}^{\tilde{\tau}_n} |Z_s|^2ds \leq& Y_\tau - Y_{\tilde{\tau}_n} + \int_{\tau}^{T} (\theta_{s} + \sigma|U_s| + \lambda |V_s|^{1+\alpha} + \sigma_0\dbE_s[|U_{s+\delta_s}|] +\lambda_{0}\dbE_{s}[|V_{s+\zeta_s}|])ds\\
		&+\int_{\tau}^{\tilde{\tau}_n}Z_sdW_s\leq N + \int_{\tau}^{\tilde{\tau}_n}Z_sdW_s, 
	\end{align*}
	where $$N \triangleq 2\mathop{\sup}\limits_{s\in [t, T]}|Y_s| +  \int_{\tau}^{T} (\theta_{s} + \sigma|U_s| + \lambda |V_s|^{1+\alpha} + \sigma_0\dbE_t[|U_{s+\delta_s}|] +\lambda_{0}\dbE_{s}[|V_{s+\zeta_s}|])ds$$
	and
	\begin{align*}
		\tilde{\tau}_n \triangleq \inf\Big\{s\in [\tau, T] : \int_{0}^{s}|Z_r|^2dr \geq n\Big\} \wedge T.
	\end{align*}
	Then, for each $\varepsilon > 0$, we have that for any $\tau\in \mathscr{T}[0, T]$,
	\begin{align}\label{eq:6.13}
		\exp \Big( \frac{\gamma}{2}\varepsilon \int_{\tau}^{\tilde{\tau}_n}|Z_s|^2ds\Big)
		\leq \exp(\varepsilon N)\exp\Big(\varepsilon\int_{\tau}^{\tilde{\tau}_n}Z_sdW_s - \frac{3}{2}\varepsilon^2\int_{\tau}^{\tilde{\tau}_n}|Z_s|^2ds\Big)\exp\Big(\frac{3}{2}\varepsilon^2\int_{\tau}^{\tilde{\tau}_n}|Z_s|^2ds\Big).
	\end{align}
	By the definition of $\tilde{\tau}_n$ and Novikov’s condition, 
	we know the process
	\begin{align*}
		G_1(t) \triangleq \exp\Big(3\varepsilon\int_{0}^{t}Z_sdW_s - \frac{9}{2}\varepsilon^2\int_{0}^{t}|Z_s|^2ds\Big), \q t\in [\tau, \tilde{\tau}_n]
	\end{align*}
	is a positive martingale.
	Then by Optional Sampling theorem, we have
	\begin{align}\label{eqq::6.3}
		&\dbE_{\tau} \Big[\exp\Big(3\varepsilon\int_{\tau}^{\tilde{\tau}_n}Z_sdW_s - \frac{9}{2}\varepsilon^2\int_{t}^{\tilde{\tau}_n}|Z_s|^2ds\Big)\Big] \nonumber\\
		=&\frac{\dbE_{\tau} \Big[\exp\Big(3\varepsilon\int_{0}^{\tilde{\tau}_n}Z_sdW_s - \frac{9}{2}\varepsilon^2\int_{0}^{\tilde{\tau}_n}|Z_s|^2ds\Big)\Big]}{\exp\Big(3\varepsilon\int_{0}^{\tau}Z_sdW_s - \frac{9}{2}\varepsilon^2\int_{0}^{t}|Z_s|^2ds\Big)} = 1.
	\end{align}
	Taking conditional expectation $\dbE_\tau$ on both sides of (\ref{eq:6.13}), by H\"{o}lder's inequality, and (\ref{eqq::6.3}), we obtain
	\begin{align*}
		\dbE_\tau\Big[\exp \Big( \frac{\gamma}{2}\varepsilon \int_{\tau}^{\tilde{\tau}_n}|Z_s|^2ds\Big)\Big] \leq \Big(\dbE_\tau\Big[\exp(3\varepsilon N)\Big]\Big)^{\frac{1}{3}} \Big(\dbE_\tau\Big[\exp\Big(\frac{9}{2}\varepsilon^2\int_{\tau}^{\tilde{\tau}_n}|Z_s|^2ds\Big)\Big]\Big)^{\frac{1}{3}}.
	\end{align*}
	Consequently, for any $\varepsilon \leq \frac{\gamma}{9}$ and $\tau \in \mathscr{T}[0, T]$, we have
	\begin{align*}
		\Big(\dbE_\tau\Big[\exp \Big( \frac{\gamma}{2}\varepsilon \int_{\tau}^{\tilde{\tau}_n}|Z_s|^2ds\Big)\Big]\Big)^{\frac{2}{3}} \leq \Big(\dbE_\tau\Big[\exp(3\varepsilon N)\Big]\Big)^{\frac{1}{3}},
	\end{align*}
	which yields (\ref{eq:6.10}) from Fatou's lemma easily.
	For the case of (\ref{eq:6.9}). Proceding the above and since 
	\begin{equation*}
		G_2(t) \triangleq \exp\Big(-3\varepsilon\int_{0}^{t}Z_sdW_s - \frac{9}{2}\varepsilon^2\int_{0}^{t}|Z_s|^2ds\Big), \q t\in [\tau, \tilde{\tau}_n]
	\end{equation*}
	is also positive martingale, we obtain that (\ref{eq:6.10}) is also satisfied.
\end{proof}

%


\end{document}